\numberwithin{equation}{section}
\theoremstyle{plain}
\newtheorem{theorem}{Theorem}[section]
\newtheorem{lemma}[theorem]{Lemma}
\newtheorem{proposition}[theorem]{Proposition}
\newtheorem{corollary}[theorem]{Corollary}
\theoremstyle{definition}
\newtheorem{definition}[theorem]{Definition}
\newtheorem{remark}[theorem]{Remark}
\newtheorem{example}[theorem]{Example}
\newcommand{\meet}{\wedge}
\newcommand{\join}{\vee}
\newcommand\upset{\mathord{\uparrow}}
\newcommand\downset{\mathord{\downarrow}}
\newcommand\rad[1]{\mathscr{R}({#1})}
\newcommand\bool[1]{\mathscr{B}({#1})}
\newcommand\mtl{\textsf{MTL}}
\newcommand\gmtl{\textsf{GMTL}}
\newcommand\sbp{\textsf{srDL}}
\newcommand\sbph{\textsf{srDL}_{\textsf{H}}}
\newcommand\ibp{\textsf{sIDL}}
\newcommand\psh{\textsf{GMTL}}
\newcommand\Xd[1]{\mathcal{S}({#1})}
\newcommand\Ad[1]{\mathcal{A}({#1})}
\newcommand{\Xdf}{\mathcal{S}}
\newcommand{\Adf}{\mathcal{A}}
\newcommand{\comp}{{\mathsf{c}}}
\newcommand\scrR{\mathscr{R}}
\newcommand\scrB{\mathscr{B}}
\newcommand\scrC{\mathscr{C}}
\newcommand\fr[1]{\mathfrak{#1}}
\newcommand\site[1]{\mathcal{S}_\fr{{#1}}}
\newcommand\Da[1]{\mathcal{F}_{\bf {#1}}}
\newcommand{\res}{\Rightarrow}
\newcommand{\resrad}{\Rightarrow^{\rad{\bf A}}}
\newcommand{\prodrad}{\bullet^{\rad{\bf A}}}
\newcommand{\upup}{\upupsilon}
\newcommand{\SnuX}{{\bf S} \otimes^{\Delta}_{\Upupsilon} {\bf X}}
\newcommand{\Qtau}{\mathcal{Q}^{\tau}}
\newcommand{\F}{\mathcal{F}}
\newcommand{\G}{\mathcal{G}}
\newcommand{\ovr}{\slash}
\newcommand{\under}{\backslash}
\begin{document}

%%%%%%%%%%%%%%%%%%%%%%%%%%%%%%%%%%%%%%%%%%%%%%%%%%%%%%%%%%%%%%%%%%%%%%
%% FRONT MATTER
%%%%%%%%%%%%%%%%%%%%%%%%%%%%%%%%%%%%%%%%%%%%%%%%%%%%%%%%%%%%%%%%%%%%%%

\title[A topological approach to MTL-algebras]{A topological approach to MTL-algebras}

%% First author: in the order \author, \address, \urladdr, \email
\author[Wesley Fussner]{Wesley Fussner}
\address{Department of Mathematics\\
University of Denver\\Denver, Colorado 80231\\USA}
\email{wesley.fussner@du.edu}

%% Second author: in the order \author, \address, \urladdr, \email
\author[Sara Ugolini]{Sara Ugolini}
\address{Department of Computer Science\\ University of Pisa\\Pisa, Italy}
\email{sara.ugolini@di.unipi.it}

%% Thanks (Optional)
\thanks{We would like to thank Nick Galatos for a number of helpful suggestions regarding this work.}

%% Dedication (Optional)
%% \dedicatory{This article is dedicated to AU authors, present and future}

%% AMS subject classification; see http://www.ams.org/msc
%% List classification codes in order of relevance
\subjclass{03G10, 03G25, 06D50, 06E15}

%% Key words and phrases
\keywords{Priestley duality, Stone duality, MTL-algebras, GMTL-algebras, residuated lattices, twist products}

\begin{abstract}
We give a dualized construction of Aguzzoli-Flaminio-Ugolini of a large class of MTL-algebras from quadruples $({\bf B},{\bf A},\join_e,\delta)$, consisting of a Boolean algebra ${\bf B}$, a generalized MTL-algebra ${\bf A}$, and maps $\join_e$ and $\delta$ parameterizing the connection between these two constituent pieces. Our dualized construction gives a uniform way of building the extended Priestley spaces of MTL-algebras in this class from the Stone spaces of their Boolean skeletons, the extended Priestley spaces of their radicals, and a family of maps connecting the two. In order to make this dualized construction possible, we also present novel results regarding the extended Priestley duals of MTL-algebras and GMTL-algebras, in particular emphasizing their structure as Priestley spaces enriched by a partial binary operation.
\end{abstract}

\maketitle

%%%%%%%%%%%%%%%%%%%%%%%%%%%%%%%%%%%%%%%%%%%%%%%%%%%%%%%%%%%%%%%%%%%%%%
%% MAIN MATTER
%%%%%%%%%%%%%%%%%%%%%%%%%%%%%%%%%%%%%%%%%%%%%%%%%%%%%%%%%%%%%%%%%%%%%%

\section{Introduction}
In lattice theory, triples constructions date back to Chen and Gr\"atzer's 1969 decomposition theorem for Stone algebras: each Stone algebra is characterized by the triple consisting of its lattice of complemented elements, its lattice of dense elements, and a map associating these structures \cite{ChenGratzer}. There is a long history of dual analogues of this construction, with Priestley providing a conceptually-similar treatment on duals in 1972 \cite{Pr3} and Pogel also exploring dual triples in his 1998 thesis \cite{PogelThesis}. At the same time, triples decompositions have been extended to account for richer algebraic structures. For example, Montagna-Ugolini \cite{MontUgolini} and Aguzzoli-Flaminio-Ugolini \cite{AguzFlamUgol} have provided similar triples decompositions for large classes of monoidal t-norm logic (MTL) algebras, the algebraic semantics for monoidal t-norm based logic \cite{EG01}, while \cite{BusanicheCignoliMarcos} and \cite{BusanMarcUgo} develop analogous triples representations for classes of residuated lattices. The aim of this paper is to provide a duality-theoretic perspective on these constructions, showing that Stone-Priestley duality offers a clarifying framework that sheds light on the representation of \cite{AguzFlamUgol}.

Putting aside the special case of Heyting algebras, Priestley duality has played a relatively minor role in the theory of distributive residuated lattices. This is perhaps due to the fact that existing Priestley-based dualities for distributive residuated lattices (see, e.g., \cite{Urquhart,GalatosThesis,Celani}) typically encode the monoid operation and its residuals by a ternary relation on duals, thereby attenuating the pictorial insights Priestley duality affords. However, in some settings the relation dualizing the monoid operation and its residuals can be understood as the graph of a (sometimes partially-defined) function \cite{FussPalm,Gehrke}, and this presents new approaches. The algebras in play in \cite{AguzFlamUgol} all possess functional Priestley duals in the aforementioned sense, and this facilitates our work dualizing the construction of the latter paper. Aligned with this approach, we develop a presentation of extended Priestley duals for $\mtl$-algebras that emphasized their character as partial algebras equipped with a topology. We also apply these ideas to obtain a similar presentation of the extended Priestley duals of $\gmtl$-algebras, where the aforementioned partial operations on duals are total.

Our main contribution consists of a construction of the extended Priestley duals of \sbp-algebras, a large class of ${\sf MTL}$-algebras including: G\"odel algebras, product algebras, the variety {\sf DLMV} generated by perfect MV-algebras as well as the variety generated by perfect {\sf MTL}-algebras (introduced as {\sf IBP$_{0}$} in \cite{NogueraThesis}, renamed {\textsf sIDL} in \cite{UgoliniThesis}), pseudocomplemented {\sf MTL}-algebras and the variety {\sf NM}$^{- }$ of nilpotent minimum algebras without negation fixpoint. Each \sbp-algebra enjoys a representation as a triple (augmented by an operator to form a quadruple, as shown in \cite{AguzFlamUgol}) consisting of its Boolean skeleton, its radical (i.e., the intersection of its maximal deductive filters), and an \emph{external join} connecting them (together with an additional nucleus on the radical). We construct the extended Priestley dual of each \sbp-algebra from the Stone space associated to its Boolean skeleton, the extended Priestley dual of its radical, and a collection of connecting maps, via a {\em rotation construction} that is similar to the one used in \cite{FussnerGalatos} (therein called the {\em reflection construction}, and used to provide a dualized version of the categorical equivalence present in the same paper between bounded Sugihara monoids and G\"odel algebras enriched with a Boolean constant).

The paper is organized as follows. At the outset, Section \ref{sec:prelim} offers necessary background material on residuated lattices, {\sf MTL}-algebras, and quadruple constructions needed in the sequel. Then Section \ref{sec:duality} develops material on extended Priestley duality as specialized to {\sf MTL}-algebras and {\sf GMTL}-algebras, in particular articulating this duality in terms of Priestley spaces enriched by a (possibly partial) binary operation. Insofar as the authors are aware, this perspective on the extended Priestley duality for {\sf MTL}-algebras and {\sf GTML}-algebras is new. Section \ref{sec:filterpairs} begins to outline technical material regarding the prime filters of $\sbp$-algebras, and Section \ref{sec:multiplyingfilters} focuses on how the multiplicative structure of $\sbp$-algebras may be captured in its extended Priestley space. Section \ref{sec:dualquadruples} introduces the appropriate dual analogues of the algebraic quadruples of \cite{AguzFlamUgol}, and constructs the extended Priestley duals of $\sbp$-algebras from them. %Finally, Section \ref{sec:dualequiv} ties this material together by establishing a dual equivalence of categories between the category of $\sbp$-algebras and the category of dual quadruples described in the previous section.

\section{Preliminaries}\label{sec:prelim}

In order to summarize background material and fix notation, we first discuss preliminary material on residuated algebraic structures. As a general reference on residuated structures, we refer the reader to the standard monograph \cite{GJKO}.

\subsection{Residuated lattices and $\sbp$-algebras}

An algebra ${\bf A} = (A,\meet,\join,\cdot,\backslash,\slash,1)$ is called a \emph{residuated lattice} when the $(\meet,\join)$-reduct of ${\bf A}$ is a lattice, the $(\cdot,1)$-reduct of ${\bf A}$ is a monoid, and for all $a,b,c\in A$,
$$b\leq a\backslash c \iff a\cdot b\leq c\iff a\leq c\slash b.$$
The latter condition is typically called the \emph{law of residuation}.

We say that a residuated lattice ${\bf A}= (A,\meet,\join,\cdot,\backslash,\slash,1)$ is
\begin{itemize}
\item \emph{commutative} if $a\cdot b = b\cdot a$ for all $a,b\in A$,
\item \emph{integral} if $a\leq 1$ for all $a\in A$,
\item \emph{distributive} if $(A,\meet,\join)$ is a distributive a lattice, and
\item \emph{semilinear} if ${\bf A}$ is a subdirect product of residuated lattices whose lattice reducts are chains.
\end{itemize}

Note that when ${\bf A}$ is commutative, it satisfies the identity $a\backslash b = b\slash a$, and hence the operations $\backslash$ and $\slash$ coincide. In this case, we write $a\to b$ for the common value of $a\backslash b$ and $b\slash a$. If $a,b\in A$, we also sometimes write the product $a\cdot b$ as $ab$. The classes of residuated lattices, commutative residuated lattices, and commutative integral distributive residuated lattices are denoted $\textsf{RL}$, $\textsf{CRL}$, and $\textsf{CIDRL}$, respectively. We also sometimes refer to the members of these classes as RLs, CRLs, and CIDRLs.

When the lattice reduct of a residuated lattice ${\bf A}$ is bounded, we refer to the expansion of ${\bf A}$ by constants $\bot$ and $\top$ designating these bounds as a \emph{bounded} residuated lattice. Bounded residuated lattices are term-equivalent to expansions of RLs by only the least element $\bot$ due to the fact that every bounded RL satisfies $\top=\bot\slash\bot$. Because $1$ designates the top element of a CIDRL, the constant designating the bottom element in a bounded CIDRL will be denoted by $0$. For bounded CIDRLs, we define a unary negation operation $\neg$ by $\neg a = a\to 0$. We may also define a binary operation $\oplus$ by $a\oplus b = \neg (\neg a \cdot \neg b)$. The operation $\oplus$ is associative and commutative.

We will call semilinear CIDRLs \emph{generalized monoidal t-norm based logic algebras}, or $\gmtl$-algebras for short. These algebras are sometimes called \emph{prelinear semihoops} in the literature, and are of fundamental importance to this study. The class of $\gmtl$-algebras will be denoted by $\gmtl$. Bounded $\gmtl$-algebras are called \emph{monoidal t-norm based logic algebras} or, more briefly, \emph{\mtl-algebras}. The class of $\mtl$-algebras will be denoted $\mtl$, and gives the equivalent algebraic semantics \cite{BlokPigozzi} of Esteva and Godo's monoidal t-norm based logic (as introduced in \cite{EG01}). An \mtl-algebra is called an \emph{$\sbp$-algebra} if it additionally satisfies the identities
$$\neg (a^2) \to (\neg\neg a\to a) = 1\text{ and } (2a)^2 = 2(a^2),$$
where $a^2$ and $2a$ abbreviate $a\cdot a$ and $a\oplus a$, respectively. $\sbp$-algebras satisfying the involutivity identity $\neg\neg a = a$ are called \emph{$\ibp$-algebras}. The classes of $\sbp$-algebras and $\ibp$-algebras are denoted $\sbp$ and $\ibp$.

The law of residuation is not \emph{a priori} an equational condition, and yet $\textsf{RL}$ forms a variety (and consequently so do \textsf{CRL} and \textsf{CIDRL}). $\gmtl$-algebras turn out to be exactly those CIDRLs satisfying the identity $(a\to b)\join (b\to a)=1$, and therefore $\gmtl$ likewise forms a variety. It follows from this that the classes $\mtl$, $\sbp$, and $\ibp$ form varieties too. Each of these varieties may moreover be considered as a category in which the objects are the algebras belonging to the appropriate class, and the morphisms are the algebraic homomorphisms in the appropriate signature. In the sequel, we freely regard the varieties that we consider as categories in this manner, and we make no distinction between a variety and the category so obtained. In particular, we use \gmtl, \mtl, $\sbp$, and $\ibp$ to refer to the categories just described as well as the varieties.

The following proposition gives some useful properties that hold in the varieties discussed above. 
\begin{proposition}
Let ${\bf A}$ be an algebra in $\gmtl$, $\mtl$, $\sbp$, or $\ibp$. Then the following hold for all $a,b,c\in A$.
\begin{enumerate}
\item $a\cdot (a\to b)\leq b$.
\item If $a\leq b$, then $a\cdot c\leq a\cdot b$, $c\to a\leq c\to b$, and $b\to c\leq a\to c$.
\item $a\cdot (b\join c) = (a\cdot b)\join (a\cdot c)$.
\item $a\cdot (b\meet c) = (a\cdot b)\meet (a\cdot c)$.
\item $a\to (b\join c) = (a\to b)\join (a\to c)$.
\item $a\to (b\meet c) = (a\to b)\meet (a\to c).$
\item $(a\join b)\to c = (a\to c)\meet (b\to c).$
\item $(a\meet b)\to c = (a\to c)\join (b\to c)$.
\item $(a\cdot b)\to c = a\to (b\to c)$.
\item $a\cdot b\leq a\meet b$.
\end{enumerate}
If ${\bf A}$ is in $\mtl$, $\sbp$, or $\ibp$, then we additionally have
\begin{enumerate}
\item[(11)] $\neg (a\meet b) = \neg a\join\neg b.$
\item[(12)] $\neg (a\join b) = \neg a \meet\neg b.$
\item[(13)] $a\meet\neg a\leq b\join\neg b.$
\item[(14)] $a\leq \neg\neg a.$
\end{enumerate}
\end{proposition}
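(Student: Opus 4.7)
My plan is to partition the fourteen claims by the structural assumptions they actually use and dispatch each group by a uniform technique, rather than verifying them individually.

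\emph{Claims valid in every commutative integral residuated lattice.} Items (1), (2), (3), (6), (7), (9), (10), and (12) follow from the law of residuation alone. Specifically: (1) is residuation applied to $a \to b \leq a \to b$; (9) is two applications of residuation; (6) and (7) express that $\to$ preserves meets in the consequent and converts joins in the antecedent into meets; (3) reflects that $\cdot$, having both residuals, preserves existing joins in each coordinate; (10) combines integrality with the monotonicity asserted in (2); and (12) is (7) specialized to $c = 0$. Each is standard and can be cited from \cite{GJKO}, so I would not reprove them.

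\emph{Claims requiring prelinearity.} Items (4), (5), (8), (11), and (13) fail in general CIDRLs, and their proof depends on the semilinearity built into $\gmtl$. The cleanest approach is to use that every $\gmtl$-algebra is a subdirect product of totally ordered CIDRLs, so each identity need only be checked in chains. In a chain, $b \meet c$ and $b \join c$ reduce to the smaller and larger of the two arguments, whereupon (4), (5), (8), and (11) follow by direct case distinction; indeed, (11) is just (8) at $c = 0$. Item (13) additionally uses the presence of $0$: on an $\mtl$-chain, if $a \leq \neg a$ then $a \cdot a \leq a \cdot \neg a \leq 0$ (using (1)) so $a^2 = 0$ and $a \meet \neg a = a$, and a symmetric analysis of $b$ against $\neg b$, together with the linearity between $a$ and $b$, closes the remaining cases.

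Item (14) is immediate from (1): since $a \cdot \neg a = a \cdot (a \to 0) \leq 0$, residuation yields $a \leq \neg a \to 0 = \neg \neg a$. The main obstacle, such as it is, is purely organizational: arranging the list so that prelinearity is invoked precisely where needed and so that the chain reduction is applied only to those items that genuinely require it. Once this bookkeeping is settled, each individual verification is routine and no essentially new argument is required.
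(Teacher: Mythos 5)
Your proposal is correct, but there is nothing in the paper to compare it against: the paper states this proposition without proof, as standard background drawn from the literature on residuated lattices (the general reference \cite{GJKO} given at the start of Section 2). Your organization is the right one and matches how these facts are usually established: items (1), (2), (3), (6), (7), (9), (10) follow from residuation and integrality in any commutative integral residuated lattice, and (12), (14) follow from these once the constant $0$ is available; items (4), (5), (8), (11), (13) are genuinely prelinearity-dependent, and since they are (in)equations they are preserved under subdirect products, so the reduction to totally ordered algebras is legitimate and the chain verifications are immediate. The only place where your sketch is thin is (13): ``a symmetric analysis \ldots closes the remaining cases'' hides a four-way case split on the relative order of $a$ versus $\neg a$ and $b$ versus $\neg b$, each case resolved by deriving $a\cdot b=0$ (hence $a\leq\neg b$) from linearity and reaching a contradiction with the assumed failure of the inequality. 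A slightly slicker chain argument: if $a\meet\neg a\not\leq b\join\neg b$ then in particular $b< a\meet\neg a\leq \neg a$, so $a\cdot b=0$ and $a\leq\neg b$, while also $b<a$ gives $\neg a\leq\neg b$; either way $a\meet\neg a\leq\neg b\leq b\join\neg b$, a contradiction. With that detail filled in, the proof is complete. (One cosmetic point: (12) needs the bound $0$ and so belongs with the second list, as the proposition itself indicates, though your derivation of it from (7) at $c=0$ is of course correct.)
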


We say that an $\mtl$-algebra ${\bf A}$ \emph{has no zero divisors} if for all $a,b\in A$, $a\cdot b = 0$ implies $a=0$ or $b=0$. An $\mtl$-algebra is called an $\textsf{SMTL}$-algebra if it satisfies the identity $a\meet\neg a = 0$. As specialized to chains, the following appears in \cite[Proposition 4.14]{NogueraThesis}.

\begin{proposition}\label{prop:zerodiv}
Let ${\bf A}$ be an \mtl-algebra. Then ${\bf A}$ has no zero divisors if and only if ${\bf A}$ is a directly-indecomposable $\textsf{SMTL}$-algebra.
\end{proposition}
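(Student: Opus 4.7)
The plan is to prove both implications, with the forward direction essentially unfolding definitions and the reverse requiring a short semilinearity argument.

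For the forward implication, assume ${\bf A}$ has no zero divisors. The law of residuation gives $a \cdot \neg a = a \cdot (a \to 0) \leq 0$ for every $a \in A$, so the hypothesis forces $a = 0$ or $\neg a = 0$; in either case $a \meet \neg a = 0$, establishing the $\textsf{SMTL}$ identity. For direct indecomposability, I would invoke the standard correspondence between direct decompositions of a bounded commutative integral residuated lattice and the elements of its Boolean skeleton: if $b \in A$ admits a complement $b'$, then $b \cdot b' \leq b \meet b' = 0$ yields $b = 0$ or $b' = 0$, and in the latter case $b = b \join b' = 1$. Hence the Boolean skeleton reduces to $\{0, 1\}$ and ${\bf A}$ is directly indecomposable.

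For the reverse implication, the key intermediate fact is that in every $\textsf{SMTL}$-algebra, $\neg\neg a$ is a Boolean element with complement $\neg a$. The meet equation $\neg\neg a \meet \neg a = 0$ follows by substituting $\neg a$ for $a$ in the defining $\textsf{SMTL}$ identity. For the join equation $\neg\neg a \join \neg a = 1$, I would argue by semilinearity: since $\textsf{SMTL}$ is a subvariety of the semilinear variety $\mtl$, it is itself semilinear, so it suffices to verify the identity in $\textsf{SMTL}$-chains. In a chain, $a \meet \neg a = 0$ combined with the linear order forces either $a = 0$ (whence $\neg a = 1$) or $\neg a = 0$ (whence $\neg\neg a = 1$), so the join is $1$ in either case, which recovers the chain statement of \cite[Proposition~4.14]{NogueraThesis}. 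Granted this, if ${\bf A}$ is directly indecomposable then $\neg\neg a \in \{0, 1\}$ for every $a$ by the Boolean-skeleton correspondence; since $a \leq \neg\neg a$, any nonzero $a$ must have $\neg\neg a = 1$, and consequently $\neg a = 0$. Hence $a \cdot b = 0$ with $a \neq 0$ forces $b \leq a \to 0 = \neg a = 0$, so ${\bf A}$ has no zero divisors.

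The only nonroutine step is lifting the chain-case identity $\neg\neg a \join \neg a = 1$ to arbitrary $\textsf{SMTL}$-algebras via semilinearity; once this is in place, the Boolean-skeleton correspondence handles the remaining work mechanically.
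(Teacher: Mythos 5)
Your proof is correct, and for the reverse implication it takes a genuinely different route from the paper. The forward direction is essentially the paper's argument (the paper derives indecomposability directly by exhibiting the zero divisors $(1,0)\cdot(0,1)=(0,0)$ in a putative nontrivial product, whereas you route it through the triviality of the Boolean skeleton; both are fine). For the converse, the paper simply cites the structure theorem that directly indecomposable $\textsf{SMTL}$-algebras are ordinal sums ${\bf 2}\oplus{\bf H}$ with ${\bf H}$ a $\gmtl$-algebra, and reads off the absence of zero divisors from that decomposition. You instead give a self-contained equational argument: you show $\neg\neg a$ is complemented by $\neg a$ in every $\textsf{SMTL}$-algebra (the join identity $\neg a\join\neg\neg a=1$ being verified on chains and lifted by semilinearity), conclude from direct indecomposability that $\neg a=0$ for every nonzero $a$, and then kill zero divisors by residuation. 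Your route buys independence from the ordinal-sum structure theory, at the cost of leaning on the (standard, but still unproved here) correspondence between direct decompositions of an $\mtl$-algebra and its complemented elements --- in particular the direction asserting that a complemented element other than $0,1$ induces a nontrivial factorization, which is exactly the nontrivial half of that correspondence. Since that fact is as classical as the one the paper cites, this is an acceptable trade, and the overall argument is sound.
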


\begin{proof}
Suppose first that ${\bf A}$ has no zero divisors, and let $a\in A$. Then $a\cdot \neg a = a\cdot (a\to 0)=0$, so by hypothesis $a=0$ or $\neg a = a\to 0 = 0$. It follows immediately that $a\meet\neg a = 0$, so ${\bf A}$ is an ${\sf SMTL}$-algebra. If ${\bf A}={\bf A}_1\times {\bf A}_2$, where ${\bf A}_1$ and ${\bf A}_2$ are nontrivial $\mtl$-algebras, then the elements $(1,0),(0,1)\in A$ would satisfy $(1,0)\cdot (0,1)=(0,0)$ despite the fact that $(1,0),(0,1)$ are nonzero elements. This contradicts ${\bf A}$ having no zero divisors, whence ${\bf A}$ is directly indecomposable.
For the converse, it is well known that directly indecomposable ${\sf SMTL}$-algebras are ordinal sums of the kind ${\bf 2} \oplus {\bf H}$, with ${\bf H}$ a ${\sf GMTL}$-algebra (see for instance \cite{AguzFlamUgol}), thus they have no zero divisors.
%For the converse, assume that ${\bf A}$ is a directly-indecomposable ${\sf SMTL}$-algebra, and let $a,b\in A$ with $a\cdot b = 0$. Then by residuation we have $b\leq a\to 0=\neg a$, and because ${\bf A}$ is an ${\sf SMTL}$-algebra we have $a\meet b\leq a\meet \neg a = 0$. Since $0$ is meet-irreducible in any directly-indecomposable ${\sf SMTL}$-algebra (see, e.g., \cite[Corollary 3.13]{AguzFlamUgol}), it follows that $a=0$ or $b=0$. Hence ${\bf A}$ has no zero divisors, completing the proof. {\color{red} Sara: I think wet can also just say that directly indecomposable SMTL-algebras are of the kind ${\bf 2} \oplus {\bf H}$ with ${\bf H}$ a GMTL-algebra}
\end{proof}

Given a CIDRL ${\bf A} = (A,\meet,\join,\cdot,\to,1)$, a map $\delta\colon A\to A$ is said to be a \emph{nucleus} if it is a closure operator that satisfies $\delta (a)\cdot\delta(b)\leq \delta(a\cdot b)$.  If $\delta$ is a nucleus on ${\bf A}$, then the algebra ${\bf A}_\delta = (\delta[A],\meet,\join_\delta, \cdot_\delta, \to, \delta(1))$ (where $a\cdot_\delta b = \delta(a\cdot b)$ and $a\join_\delta b = \delta(a\join b)$) is also a CIDRL. The resulting CIDRL is called the \emph{nuclear image} of ${\bf A}$.

\subsection{Radicals, Boolean elements, and algebraic quadruples}\label{sec:quadruples}

Given an $\sbp$-algebra ${\bf A}$, the \emph{radical} of ${\bf A}$ is the intersection of the maximal deductive filters of ${\bf A}$. The radical of an $\sbp$-algebra ${\bf A}$ forms a $\gmtl$-algebra with the operations inherited from ${\bf A}$, and we denote this $\gmtl$-algebra by $\rad{\bf A}$. For any $\sbp$-algebra ${\bf A}$, the radical of ${\bf A}$ coincides with the set
\begin{equation}\label{eq:radical}\{x\in A : \neg x < x\},\end{equation}
see for example \cite{AguzFlamUgol}.
 We denote also by $\bool{\bf A}$ the \emph{Boolean skeleton} of ${\bf A}$, i.e., the largest Boolean algebra contained in ${\bf A}$, again with operations inherited from ${\bf A}$. We call the elements of $\bool{\bf A}$ \emph{Boolean elements}. The manner in which $\bool{\bf A}$ interacts with ${\bf A}$ is summarized below.

\begin{lemma}\cite[Lemma 1.5]{BusanicheCignoliMarcos}\label{lem:Boolean elements}
Let ${\bf A}$ be an $\sbp$-algebra. Then
\begin{enumerate}
\item If $u\in\bool{\bf A}$, then $\neg u\in\bool{\bf A}$ and $\neg\neg u = u$.
\item An element $u\in A$ is Boolean if and only if $u\join\neg u=1$.
\end{enumerate}
If $u\in\bool{\bf A}$ and $a,b\in A$, then
\begin{enumerate}
\item $u\cdot a = u\meet a$,
\item $u\to a = \neg u \join a$,
\item $a=(a\meet u)\join (a\meet\neg u)$,
\item If $a\meet b\geq \neg u$ and $u\meet a = u\meet b$, then $a=b$.
\end{enumerate}
\end{lemma}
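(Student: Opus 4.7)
My plan is to exploit semilinearity, i.e., the subdirect representation of $\sbp$-algebras as products of $\mtl$-chains. This reduces each quasi-identity to a trivial check on chains, since quasi-equations are preserved by subalgebras of products. I would begin with item~(2) of the first group. If $u \in \bool{\bf A}$, pick any complement $v$; then $u\cdot v \leq u\meet v = 0$ yields $v \leq u\to 0 = \neg u$, and so $u\join\neg u \geq u\join v = 1$. Conversely, assume $u\join\neg u = 1$. In any $\mtl$-chain this join being $1$ forces $u = 1$ or $\neg u = 1$ (i.e.\ $u = 0$) by totality, and in either case the meet $u\meet\neg u$ vanishes; semilinearity transports this conclusion back to ${\bf A}$. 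Hence $\neg u$ complements $u$, placing $u$ in $\bool{\bf A}$.

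Item~(1) of the first group is then a corollary. Using $u \leq \neg\neg u$ (item~(14) of the proposition above), the hypothesis $u\join\neg u = 1$ upgrades to $\neg u \join \neg\neg u = 1$, so item~(2) puts $\neg u$ into $\bool{\bf A}$. Applying the same reasoning to $\neg u$ places $\neg\neg u$ in $\bool{\bf A}$ as well; since both $u$ and $\neg\neg u$ now serve as complements of $\neg u$ in the Boolean algebra $\bool{\bf A}$, uniqueness of complements forces $\neg\neg u = u$.

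For the second group, items~(1) and~(2) are another semilinearity argument: in an $\mtl$-chain every Boolean element is $0$ or $1$, making $u\cdot a = u\meet a$ and $u\to a = \neg u \join a$ immediate; alternatively, item~(2) admits a direct proof by writing $u\to a = (u\to a)\meet(u\join\neg u) = ((u\to a)\meet u)\join((u\to a)\meet\neg u) \leq a \join \neg u$ with the reverse inequality coming from residuation. The remaining items are pure distributive-lattice calculations enabled by $u\join\neg u=1$: item~(3) is simply $a = a\meet(u\join\neg u) = (a\meet u)\join(a\meet\neg u)$, and item~(4) follows by writing both $a$ and $b$ in this form, noting that $a\meet b \geq \neg u$ forces $a\meet\neg u = \neg u = b\meet\neg u$, and then invoking $u\meet a = u\meet b$ to conclude $a = b$. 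I do not expect any genuine difficulty here; the only subtle point is keeping careful track of which quasi-identities are being lifted from chains, and that lifting is automatic once semilinearity is invoked.
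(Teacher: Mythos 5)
The paper does not actually prove this lemma---it is imported verbatim as \cite[Lemma 1.5]{BusanicheCignoliMarcos}---so there is no in-paper argument to compare yours against. Judged on its own, your proof is correct, and the strategy (reduce each quasi-identity to a check on $\mtl$-chains, where a Boolean element is forced to be $0$ or $1$, then lift via the subdirect representation) is exactly the standard one used in the literature for facts of this type. All the individual steps check out: $v\leq\neg u$ for any complement $v$ of $u$ gives the forward direction of the characterization; on a chain $u\join\neg u=1$ forces $u\in\{0,1\}$, which yields $u\meet\neg u=0$ and the identities $u\cdot a=u\meet a$ and $u\to a=\neg u\join a$; the remaining items are the distributive-lattice computations you indicate, and uniqueness of complements in a bounded distributive lattice delivers $\neg\neg u=u$. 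The one step you should make explicit is the passage from ``$u$ is complemented (by $\neg u$)'' to ``$u\in\bool{\bf A}$'': since $\bool{\bf A}$ is \emph{defined} as the largest Boolean algebra contained in ${\bf A}$ with the inherited operations, you need the observation that the set of complemented elements is closed under $\meet$, $\join$, and $\neg$ and forms a Boolean algebra under those operations (the center of a bounded distributive lattice), so that it coincides with $\bool{\bf A}$. This is routine---and follows from the same chain-lifting technique---but as written your proof silently identifies ``complemented'' with ``Boolean.''
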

The \emph{coradical} of a $\sbp$-algebra ${\bf A}$ is the set $\scrC({\bf A}) = \{ x \in {\bf A}: \neg x \in \rad{\bf A}\}$. The following properties hold.
\begin{lemma}[\cite{AguzFlamUgol},\cite{UgoliniThesis}]\label{lemma:corad}
Let $\scrC({\bf A})$ be the coradical of an \sbp-algebra ${\bf A}$. Then:
\begin{enumerate}
\item $\scrC({\bf A}) = \{\neg x : x \in \rad{\bf A}\} = \{x\in A : x < \neg x\}$.
\item For every $y \in \rad{\bf A}$, $x \in \scrC({\bf A})$, $x < y$.
\item If ${\bf A}$ is directly indecomposable, ${\bf A} \cong \rad{\bf A} \cup \scrC({\bf A})$.
\end{enumerate}
\end{lemma}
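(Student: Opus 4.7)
The plan is to prove (1) through a cycle of set inclusions, derive (2) in one line from (1) together with property (13), and handle (3) via a preliminary algebraic calculation plus structural input about directly indecomposable \sbp-algebras drawn from \cite{AguzFlamUgol}.

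For (1), I would establish the cycle $\scrC({\bf A}) \subseteq \{x : x < \neg x\} \subseteq \{\neg y : y \in \rad{\bf A}\} \subseteq \scrC({\bf A})$. The first inclusion is immediate: $\neg x \in \rad{\bf A}$ gives $\neg\neg x < \neg x$, which combined with $x \leq \neg\neg x$ from property (14) yields $x < \neg x$. The crux of the second inclusion is that $x \leq \neg x$ forces $x^2 = 0$, since $x^2 \leq x \cdot \neg x = x \cdot (x \to 0) \leq 0$ by property (1); then $\neg(x^2) = 1$ and the \sbp-identity $\neg(a^2) \to (\neg\neg a \to a) = 1$ gives $\neg\neg x \leq x$, whence $\neg\neg x = x$ using (14), and setting $y = \neg x$ produces $\neg y = x < \neg x = y \in \rad{\bf A}$ with $x = \neg y$. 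For the third inclusion, writing $z = \neg y$ with $y \in \rad{\bf A}$, the chain $\neg y < y \leq \neg\neg y$ coming from (14) shows $\neg y < \neg\neg y$, and using $\neg^3 y = \neg y$ (which follows from applying (14) to both $y$ and $\neg y$ together with the antitonicity of $\neg$), this rewrites as $\neg\neg z < \neg z$, placing $\neg z \in \rad{\bf A}$ and so $z \in \scrC({\bf A})$. Part (2) now follows quickly: by (1), $x \in \scrC({\bf A})$ and $y \in \rad{\bf A}$ give $x \meet \neg x = x$ and $y \join \neg y = y$, so property (13) yields $x \leq y$; strictness holds because membership in $\scrC({\bf A}) \cap \rad{\bf A}$ would mean $x < \neg x$ and $\neg x < x$ simultaneously.

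For (3), a preparatory step rules out negation fixpoints in any nontrivial \sbp-algebra: if $a = \neg a$, then $a^2 = a \cdot \neg a = 0$, so $(2a)^2 = (\neg(\neg a \cdot \neg a))^2 = (\neg 0)^2 = 1^2 = 1$, while $2(a^2) = 2 \cdot 0 = \neg(\neg 0 \cdot \neg 0) = \neg 1 = 0$, contradicting the \sbp-identity $(2a)^2 = 2(a^2)$. The remaining and main obstacle is to show that every element of a directly indecomposable \sbp-algebra lies in $\rad{\bf A} \cup \scrC({\bf A})$, which is equivalent to demanding that $x$ and $\neg x$ always be comparable. This cannot be extracted purely from the equational axioms without further structural input (the presence of a nontrivial Boolean element would immediately obstruct it), so I would dispatch it by appealing to the structural description in \cite{AguzFlamUgol}, where each directly indecomposable \sbp-algebra is realized as an ordinal sum whose lower piece is the coradical and whose upper piece is the radical (with $\bool{\bf A} = \{0,1\}$); part (1) then matches these two pieces to $\scrC({\bf A})$ and $\rad{\bf A}$, completing the decomposition.
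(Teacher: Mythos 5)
Your proposal is correct. Note, however, that the paper itself gives no proof of this lemma: it is stated as recalled background, cited to \cite{AguzFlamUgol} and \cite{UgoliniThesis}, so there is no in-paper argument to compare against. What you supply for (1) and (2) is a clean, self-contained derivation from the listed arithmetic properties together with the first $\sbp$-identity: the key step $x\leq\neg x\Rightarrow x^2=0\Rightarrow\neg(x^2)=1\Rightarrow\neg\neg x\leq x$ is exactly the right use of $\neg(a^2)\to(\neg\neg a\to a)=1$, and the $\neg^3=\neg$ computation closes the cycle of inclusions; part (2) via $x\meet\neg x\leq y\join\neg y$ is the standard one-liner. For (3) you correctly diagnose that the claim is not equational (any Boolean element strictly between $0$ and $1$ lies in neither the radical nor the coradical) and that one must invoke the structural description of directly indecomposable $\sbp$-algebras from \cite{AguzFlamUgol} — either via triviality of the Boolean skeleton together with the representation $a=(u\join\neg x)\meet(\neg u\join x)$, or via the ordinal-sum picture, as you do. Your preliminary exclusion of negation fixpoints via $(2a)^2=2(a^2)$ is also correct (modulo the harmless notational slip ``$2\cdot 0$'' for $0\oplus 0$). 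Since the lemma is imported from the literature, deferring the structural core of (3) to the cited source is entirely appropriate.
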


Every element $a$ of an $\sbp$-algebra ${\bf A}$ can be expressed by means of an element $u \in \bool{\bf A}$ and an element $x \in \rad{\bf A}$ as 
\begin{equation}\label{eq:el}a = (u \lor \neg x) \land (\neg u \lor x)\end{equation}
Note that a Boolean element $u$ and a radical element $x$ of an $\sbp$-algebra, we have that $(u \lor \neg x) \land (\neg u \lor x)=(u\meet x)\join (\neg u\meet\neg x)$. These representations of elements inspired the decomposition results developed in \cite{AguzFlamUgol}, which we recall presently.

\begin{definition}\label{def:algquad}A \emph{wdl-admissible map} on a $\gmtl$-algebra ${\bf A}$ is a nucleus $\delta\colon A\to A$ that preserve both $\meet$ and $\join$. A \emph{GMTL-quadruple} (hereinafter \emph{algebraic quadruple}) is an ordered quadruple $({\bf B}, {\bf A}, \lor_{e}, \delta)$, where ${\bf B}$ is a Boolean algebra, ${\bf A}$ is a $\gmtl$-algebra, $B\cap A = \{1\}$, $\delta$ is wdl-admissible on ${\bf A}$, and $\vee_e:B\times A\to A$ is an \emph{external join}, i.e. it satisfies the following conditions where, for fixed $u \in B$ and $x\in A$, $\upnu_u(y) = u\vee_e y$ and $\uplambda_x(v) = v\vee_e x$:%\footnote{We want to warn the reader that the following are the same conditions imposed to $\vee_e$ in \cite[Definition 4.1]{MU15} and that we here adopt and recall for the sake of keeping the present manuscript as self-contained as possible.}
\begin{itemize}
\item[(V1)] For every $u \in B$, and $x\in A$, %the map $h_b(x) = b\vee_ex$
$\upnu_u$ is an endomorphism of ${\bf A}$ and the map $\uplambda_x$ is a lattice homomorphism from (the lattice reduct of) ${\bf B}$ into (the lattice reduct of) ${\bf A}$.
\item[(V2)] %With reference to notation in (V1),
$\upnu_0$ is the identity on ${\bf A}$ and $\upnu_1$ is constantly equal to $1$. (Note that since $\upnu_u(x) = \uplambda_x(u)$, for all $x \in A$,  $\uplambda_x(1) = 1$ and $\uplambda_x(0) = x$).
\item[(V3)] For all $u,v \in B$ and for all $x,y \in A$, $\upnu_u(x) \vee \upnu_{v} (y) = \upnu_{u\vee v} (x \vee y) = \upnu_u(\upnu_{v} (x \vee y))$.
%\item[(V4)]  For all $b \in B$ and for all $c,c' \in C$, $(b\vee_e c)\cdot c' = (\neg b\vee_e c')\wedge(b\vee_e (c\cdot c'))$.
\end{itemize}
%{\color{red}***Sara: do we really need to swap B and H? -- I called it H because A was used also for SBP-algebras-- It may be confusing if one looks at the other paper, but if you think it is useful i'll swap the pairs in what follows***}
\end{definition}
Such algebraic triples constitute a category $\mathcal{Q}_{\psh}$, whose morphisms are given by  {\em good morphism pairs}, i.e., if $({\bf B}, {\bf A}, \vee_e, \delta)$ and $({\bf B}', {\bf A}', \vee_e', \delta')$ are two triplets, a pair $(f,g)$ is a good morphism pair if $f:{\bf B}\to{\bf B}'$ is a Boolean homomorphism, $g:{\bf A}\to{\bf A}'$ is a GMTL-homomorphism, for every $(u,x)\in B\times A$, $g(u\vee_e x)=f(u)\vee_e' g(x)$, and finally $g(\delta(x)) = \delta'(g(x))$.

%$\psi\colon A\times B \to A$ is a map such that
%\begin{enumerate}
%\item for each fixed $a\in A$, the map $\psi (a,-)$ is a is lattice homomorphism from ${\bf B}$ to ${\bf A}$,
%\item for each fixed $b\in B$, the map $\psi (-,b)$ is a prelinear hoop endomorphism of ${\bf A}$,
%\item $\psi(-,0)$ is the identity on ${\bf A}$ and $\psi(-,1)$ is identically $1$,
%\item for all $a_1,a_2\in A$ and $b_1,b_2\in B$, $\psi(a_1,b_1)\join\psi(a_2,b_2) = \psi(a_1\join a_2,b_1\join b_2)=\psi(\psi(a_1\join a_2,b_2),b_1)$.
%\end{enumerate}

For an algebraic quadruple $({\bf B}, {\bf A}, \lor_{e}, \delta)$, define an equivalence relation $\sim$ on on $B\times A$ by $(u,x)\sim (v,y)$ if and only if $u=v$, $\upnu_{\neg u}(x) = \upnu_{\neg u}(y)$, and $\upnu_{u}(\delta(x)) = \upnu_u(\delta(y))$. 

The central result of \cite{AguzFlamUgol} defines the algebra ${\bf B}\otimes_e^\delta{\bf A}=(B\times A/\mathord\sim, \odot, \Rightarrow, \sqcap, \sqcup, [0,1], [1,1])$ with operations given by: \vspace{.3cm} 

\noindent
$[u,x]\odot[v, y]=[u\land v,  \upnu_{u\vee\neg v}(y\to x)\wedge \upnu_{\neg u\vee v}(x\to y)\wedge \upnu_{\neg u\vee \neg v}(x\cdot y)]$
\vspace{.2cm}

\noindent
$[u,x]\Rightarrow[v, y]=[u\to v, \upnu_{u\vee v}(\delta(y)\to \delta(x))\wedge \upnu_{\neg u\vee v}(\delta(x\cdot y))\wedge \upnu_{\neg u\vee \neg v}(x\to y)]$
\vspace{.2cm}

\noindent
$[u,x]\sqcap[v, y]=[u\land v, \upnu_{u\vee v}(x\vee y)\wedge \upnu_{u\vee\neg v}(x)\wedge \upnu_{\neg u\vee v}(y)\wedge \upnu_{\neg u\vee \neg v}(x\land y)]$
\vspace{.2cm}

\noindent
$[u,x]\sqcup[v, y]=[u\vee v, \upnu_{u\vee v}(x\land y)\wedge \upnu_{u\vee\neg v}(y)\wedge \upnu_{\neg u\vee v}(x)\wedge \upnu_{\neg u\vee \neg v}(x\vee y)]$
\vspace{0.3cm}\\
\noindent
%The resulting algebra is denoted by ${\bf A}\otimes^\delta_e{\bf B}$.
${\bf B}\otimes_e^\delta{\bf A}$ is an $\sbp$-algebra.
Moreover, given any subvariety ${\textsf H}$ of $\psh$, let $\sbph$ be the full subcategory of $\sbp$ consisting of algebras whose radical is in $\textsf{H}$, and $\mathcal{Q}_{\textsf H}$ the full subcategory of $\mathcal{Q}_{\psh}$ made of quadruples $({\bf B},{\bf A},  \lor_{e}, \delta)$ with ${\bf A} \in {\textsf H}$. $\mathcal{Q}_{\textsf H}$ and $\sbph$ are equivalent via the functor $\Phi_{\textsf A}: \sbph \to \mathcal{Q}_{\psh}$ given by
\begin{align*}
 \Phi_{\textsf H}({\bf A})&= (\scrB({\bf A}), \scrR({\bf A}), \lor, \neg\neg)\\ %for every
\Phi_{\textsf H}(k)& =(k_{\restriction_{\scrB({\bf A})}}, k_{\restriction_{\scrR({\bf A})}})
\end{align*}
with reverse functor $\Xi_{\textsf H}: \mathcal{Q}_{\psh} \to \sbph$ defined by
\begin{align*}
\Xi_{\textsf H}(({\bf B}, {\bf A}, \vee_e,\delta))& ={\bf B}\otimes_e^{\delta} {\bf A}\\
\Xi_{\textsf H}(f,g)([u, x])&=[f(u), g(x)].
\end{align*}
Notice that given any quadruple $( {\bf B}, {\bf A}, \lor_{e}, \delta)$, the external join $\lor_{e}$ can be described as the indexed family of maps $\{\upnu_{b} \}_{b \in B}$.

\section{Duality theory for $\mtl$, $\gmtl$, and $\sbp$}\label{sec:duality}

In this section, we recall Priestley duality for bounded distributive lattices and its extensions to account for various classes of bounded distributive residuated lattices. We further refine this extension to account for the omission of one of the lattice bounds, obtaining in particular a duality for $\gmtl$-algebras. We also provide a rendering of this duality in terms of functional dual spaces. 

\subsection{Priestley duality and its extensions}

An ordered topological space $(S,\leq,\tau)$ is said to be \emph{totally order-disconnected} if for each $x,y\in S$ with $x\not\leq y$ there exists a clopen up-set $U\subseteq S$ such that $x\in U$ and $y\notin U$. The compact, totally order-disconnected ordered topological spaces are called \emph{Priestley spaces}. As is well-known, the category \textsf{Pries} of Priestley spaces and continuous isotone maps is dually equivalent to the category \textsf{D$_{01}$} of bounded distributive lattices and bounded lattice homomorphisms \cite{Pr1,Pr2}. The functors $\Xdf$ and $\Adf$ demonstrating this dual equivalence are defined as follows.

For a bounded distributive lattice ${\bf D} = (D,\meet,\join,0,1)$, denote by $\Xd{D}$ the collection of prime filters of ${\bf D}$, and for each $a\in D$ set $\varphi_{\bf D}(a) = \{\fr{a}\in\Xd{D} : a\in \fr{a}\}$. Letting $\tau$ be the topology generated by $\{\varphi_{\bf D}(a),\varphi_{\bf D}(a)^\comp : a\in D\}$, we obtain that $\Xd{\bf D} = (\Xd{D},\subseteq,\tau)$ is a Priestley space. For a bounded distributive lattice homomorphism $f\colon {\bf D}_1\to {\bf D}_2$, we define a function $\Xd{f}\colon \Xd{{\bf D}_2}\to \Xd{{\bf D}_1}$ by $\Xd{f}(\fr{a}) = f^{-1}[\fr{a}]$. Then $\Xd{f}$ is a continuous isotone map.

For the reverse functor $\Adf$, given a Priestley space ${\bf S} = (S,\leq,\tau)$ let $\Ad{S}$ be the collection of clopen up-sets of ${\bf S}$. Then $\Ad{\bf S} = (\Ad{S},\cap,\cup,\emptyset, S)$ is a bounded distributive lattice. For a continuous isotone map $\alpha\colon {\bf S}_1\to {\bf S}_2$ between Priestley spaces, define $\Ad{\alpha}\colon\Ad{{\bf S}_2}\to\Ad{{\bf S}_1}$ by $\Ad{\alpha}(U)=\alpha^{-1}[U]$. The map $\Ad{\alpha}$ so defined is a bounded lattice homomorphism.

The set-up outlined above may be modified to accommodate the omission of one or both bounds from the signature on the lattice side of the duality, and considering only top-bounded distributive lattices will prove indispensable in this study. We denote by \textsf{D$_1$} the category of distributive lattices with a designated top element (possibly missing a bottom element) and lattice homomorphisms preserving the top element. A \emph{pointed Priestley space} is a structure $(S\leq,\top,\tau)$, where $(S,\leq,\tau)$ is a Priestley space and $\top$ is the greatest element with respect to $\leq$, and we denote the category of pointed Priestley spaces and continuous isotone maps preserving the top element by \textsf{pPries}. It turns out that \textsf{D$_1$} is dually equivalent to \textsf{pPries}. The functors demonstrating this are variants of $\Xdf$ and $\Adf$. For an object ${\bf D}$ of \textsf{D$_1$}, call $\fr{a}$ a \emph{generalized prime filter} of ${\bf D}$ if $\fr{a}$ is a prime filter of ${\bf D}$ or $\fr{a}=D$, and let $\Xd{D}$ be the collection of generalized prime filters of ${\bf D}$. For an object ${\bf S}$ of \textsf{pPries}, let $\Ad{S}$ be the collection of \emph{nonempty} clopen up-sets of ${\bf S}$. The functors resulting from these modifications yield the desired dual equivalence, and without danger of confusion we use $\Xdf$ and $\Adf$ for both the bounded and half-bounded cases.

The chief tool in the present investigation is the extension of Priestley duality to account for the expansion of the distributive lattice signature by a residuated pair $(\cdot,\to)$. Our treatment is essentially drawn from \cite{GalatosThesis} and \cite{Urquhart} (but see also \cite{Celani,CabCel,Goldblatt}). In order to describe the appropriate dual category, we introduce some notation. Let $({\bf S},R)$ be a structure where ${\bf S}$ is a Priestley space and $R$ is a ternary relation on $S$. For $U,V\subseteq S$, we define
$$R[U,V,-] =\{z\in S : (\exists x\in U)(\exists y\in V)(R(x,y,z))\}.$$
Also, for $z\in S$ set $R[z,V,-]:=R[\{z\},V,-]$ and $R[U,z,-]:=R[U,\{z\},-]$.
\begin{definition}\label{def:unpointed residuated space}
We call a structure $({\bf S},R,E)$ an \emph{unpointed residuated space} if ${\bf S}$ is a Priestley space, $R$ is a ternary relation on $S$, $E$ is a subset of $S$, and the following conditions hold for all $x,y,z,w,x',y',z'\in S$ and $U,V\in\Ad{S}$.
\begin{enumerate}
\item  $R(x,y,u)$ and $R(u,z,w)$ for some $u\in S$ if and only if $R(y,z,v)$ and $R(x,v,w)$ for some $v\in S$.
\item If $x'\leq x$, $y'\leq y$, and $z\leq z'$, then $R(x,y,z)$ implies $R(x',y',z')$.
\item If $R(x,y,z)$ does not hold, then there exist $U,V\in\Ad{S}$ such that $x\in U$, $y\in V$, and $z\notin R[U,V,-]$.
\item For all $U,V\in\Ad{S}$, each of $R[U,V,-]$, $\{z\in S : R[z,V,-]\subseteq U\}$, and $\{z\in S : R[B,z,-]\subseteq U\}$ are clopen.
\item $E\in\Ad{S}$ and for all $U\in\Ad{S}$ we have $R[U,E,-]=R[E,U,-]=U$.
\end{enumerate}
If ${\bf S}_1=(S_1,\leq_1,\tau_1,R_1,E_1)$ and ${\bf S}_2=(S_2,\leq_2,\tau_2,R_2,E_2)$ are unpointed residuated spaces, a map $\alpha\colon S_1\to S_2$ is a \emph{bounded morphism} provided it satisfies the following.
\begin{enumerate}
\item $\alpha$ is a continuous isotone map.
\item If $R_1(x,y,z)$, then $R_2(\alpha(x),\alpha(y),\alpha(z))$.
\item If $R_2(u,v,\alpha(z))$, then there exist $x,y\in S_1$ such that $u\leq\alpha(x)$, $v\leq\alpha(y)$, and $R_1(x,y,z)$.
\item For all $U,V\in\Ad{S_2}$ and all $x\in S_1$, if $R_1[x,\alpha^{-1}[U],-]\subseteq\alpha^{-1}[V]$, then $R_2[\alpha(x),U,-]\subseteq V$.
\item $\alpha^{-1}[E_2]\subseteq E_1$.
\end{enumerate}
We denote the category of unpointed residuated spaces and bounded morphisms by $\textsf{uRS}$.
\end{definition}
The proof of the following theorem may be found in \cite[Theorem 6.13]{GalatosThesis}.
\begin{theorem}\label{thm:duality for RL}
The category of bounded distributive residuated lattices with residuated lattice homomorphisms preserving the lattice bounds is dually equivalent to $\textsf{uRS}$.
\end{theorem}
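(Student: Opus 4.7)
The plan is to extend the functors $\Xdf$ and $\Adf$ from the Priestley duality to account for the residuated monoid structure. The key insight is that the (ternary) graph of the monoid operation lifts naturally to a ternary relation on prime filters: for $\fr{a}, \fr{b}, \fr{c} \in \Xd{A}$, set $R(\fr{a}, \fr{b}, \fr{c})$ iff $a \in \fr{a}$ and $b \in \fr{b}$ imply $a \cdot b \in \fr{c}$, and take $E = \varphi_{\bf A}(1)$. Conversely, from a ternary relation $R$ on a Priestley space one recovers a product by $U \cdot V = R[U, V, -]$ and the right residual by $U \to V = \{z \in S : R[z, U, -] \subseteq V\}$, with $E$ as the monoid identity. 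The whole proof then amounts to checking that these are inverse constructions both on objects and on morphisms.

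First I would define the two functors precisely and verify each is well-defined on objects. On the algebraic side, equipping $\Xd{A}$ with the above $R$ and $E$ yields an unpointed residuated space: the five axioms of Definition \ref{def:unpointed residuated space} translate, in order, to associativity of the monoid, monotonicity of $\cdot$, a separation property derived from the law of residuation (via the prime filter theorem), topological clopenness (which I would verify through the Priestley subbase $\{\varphi_{\bf A}(a)\}_a$ together with compactness), and the identity laws for $1$. On the spatial side, closure of $\Ad{S}$ under the defined operations follows from clopenness axiom (4); the residuation law $U \cdot V \subseteq W \iff V \subseteq U \to W$ is immediate from the definition, and associativity follows from axiom (1). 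On morphisms, $\Xd{h} = h^{-1}$ preserves the ternary relation and $E$ by routine unwinding, and conversely preservation of the operations by $\Ad{\alpha}$ reduces to the five bounded-morphism clauses.

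Next I would construct the unit and counit. The unit $\eta_{\bf A}(a) = \varphi_{\bf A}(a)$ and counit $\epsilon_{\bf S}(x) = \{U \in \Ad{S} : x \in U\}$ are inherited from the underlying Priestley duality, so the only additional content is that $\eta_{\bf A}$ preserves $\cdot$, $\to$, and $1$, and that $\epsilon_{\bf S}$ preserves $R$ and $E$. The forward containments $\varphi_{\bf A}(a) \cdot \varphi_{\bf A}(b) \subseteq \varphi_{\bf A}(a \cdot b)$ and the analogous statements are immediate; the reverse containments, together with the preservation of $\to$, require the prime filter arguments described below. Naturality is then formal.

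The main obstacle I anticipate is the collection of existence statements needed for the reverse containments and for the back condition (3) of Definition \ref{def:unpointed residuated space} on morphisms. Concretely, to show $\varphi_{\bf A}(a) \to \varphi_{\bf A}(b) \subseteq \varphi_{\bf A}(a \to b)$, the contrapositive requires: if $a \to b \notin \fr{c}$, produce prime filters $\fr{a} \ni a$ and $\fr{b} \not\ni b$ with $R(\fr{c}, \fr{a}, \fr{b})$. This calls for a careful Zorn's lemma argument that extends the filter generated by $\{a\} \cup \fr{c} \cdot \{a\}$ to a prime filter missing $b$, using the law of residuation to guarantee compatibility with the multiplicative relation. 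The analogous existence requirement drives the proof that the back condition on bounded morphisms is equivalent, across the duality, to preservation of $\cdot$ and $\to$ by algebra homomorphisms, and this is the technical heart of the equivalence. Everything else is bookkeeping over the underlying Priestley duality.
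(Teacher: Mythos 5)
Your proposal is correct and takes essentially the same route as the source the paper relies on (the paper does not prove this theorem itself but defers to \cite[Theorem 6.13]{GalatosThesis}): the relation $R(\fr{a},\fr{b},\fr{c})$ defined by $\fr{a}\bullet\fr{b}\subseteq\fr{c}$ on prime filters, the operations on clopen up-sets recovered from $R$ as in \eqref{eq:clprod}--\eqref{eq:clres2}, and the prime-filter extension lemma (Lemma \ref{lem:filter to prime filter}(2)) carrying the existence steps for the reverse containments and the back conditions on morphisms. The one point to tidy up is that in the non-commutative setting you must carry both residuals $\ovr$ and $\under$ rather than a single $\to$, but the argument for the second is symmetric to the one you sketch.
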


The functors of the dual equivalence described by Theorem \ref{thm:duality for RL} are variants of $\Adf$ and $\Xdf$, enriched as follows. Given a residuated lattice ${\bf A}$, define the \emph{complex product} of filters $\fr{a}$, $\fr{b}$ of ${\bf A}$ by $\fr{a}\cdot\fr{b}=\{ab : a\in\fr{a},b\in\fr{b}\}$. Note that $\fr{a}\cdot\fr{b}\subseteq\fr{c}$ holds if and only if $\fr{a}\bullet\fr{b}\subseteq\fr{c}$, where
$$\fr{a}\bullet\fr{b}=\upset (\fr{a}\cdot\fr{b})=\{c\in A : \exists (a,b)\in\fr{a}\times\fr{b}, ab\leq c\}.$$
For a bounded residuated lattice ${\bf A}$ with bounded lattice reduct ${\bf D}$, we define a ternary relation $R$ on $\Xd{D}$ by $$R(\fr{a},\fr{b},\fr{c}) \mbox{ iff } \fr{a}\bullet\fr{b}\subseteq\fr{c}$$ and set $E=\{\fr{a}\in\Xd{D} : 1\in \fr{a}\}$. Then we define $\Xd{\bf A} = (\Xd{\bf D},R,E)$. For the enrichment of $\Adf$, given an unpointed residuated space ${\bf S} = (S,\leq,\tau,R,E)$, we define $\Ad{\bf S} = (\Ad{S,\leq,\tau},\cdot,\to,E)$, where for $U,V\in\Ad{S,\leq,\tau}$ we define 
\begin{equation}\label{eq:clprod}U\cdot V=R[U,V,-]\end{equation} 
\begin{equation}\label{eq:clres1}U\ovr V=\{x\in S : R[x,V,-]\subseteq U\}\end{equation}
\begin{equation}\label{eq:clres2}U\under V=\{x\in S : R[U,x,-]\subseteq V\}\end{equation}
For commutative residuated lattices, the latter two sets coincide and we may unpack the above definition to get
\begin{equation}\label{eq:clres}U\to V =\{x\in S : (\forall y,z)(y\in U\text{ and }R(x,y,z)\implies z\in V)\}\end{equation}
The duality articulated in Theorem \ref{thm:duality for RL} may be specialized to obtain dualities for a host of subcategories of bounded residuated lattices. The following correspondences are of particular interest to this study (see, e.g., \cite{CabCel}).

\begin{proposition}\label{prop:correspond}
Let ${\bf A}=(A,\meet,\join,\cdot,\backslash,\slash,1,\bot,\top)$ be a bounded distributive residuated lattice and ${\bf S}=(S,\leq,\tau,R,E)$ its dual space. In each of the following pairs of statements, (a) holds iff (b) holds.
\begin{enumerate}
\item
\begin{enumerate}
\item ${\bf A}$ is commutative.
\item For all $x,y,z\in S$, $R(x,y,z)$ iff $R(y,x,z)$.
\end{enumerate}
\item
\begin{enumerate}
\item ${\bf A}$ is integral.
\item $E=S$.
\end{enumerate}
\item In the presence of integrality and commutativity, 
\begin{enumerate}
\item ${\bf A}$ satisfies $1\leq (a\to b)\join (b\to a)$.
\item For all $x,y,z,v,w\in S$, if $R(x,y,z)$ and $R(x,v,w)$, then $y\leq w$ or $v\leq z$.
\end{enumerate}
\end{enumerate}
\end{proposition}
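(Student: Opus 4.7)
The plan is to treat each of the three biconditionals separately, in each case exploiting the reflection $\mathbf{A}\cong\Adf(\Xdf(\mathbf{A}))$ together with the concrete formulas \eqref{eq:clprod}--\eqref{eq:clres} describing $\cdot$ and $\to$ on $\Adf(\mathbf{S})$, and the characterization $R(\fr{a},\fr{b},\fr{c})\iff\fr{a}\bullet\fr{b}\subseteq\fr{c}$ on $\Xdf(\mathbf{A})$. The general pattern in every forward direction is to verify a first-order property of $R$ (or of $E$) directly at the level of prime filters, while every converse direction translates a clopen-up-set statement back into an algebraic identity via the duality.

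For (1), if $\mathbf{A}$ is commutative then the complex product of filters is symmetric, so $\fr{a}\bullet\fr{b}=\fr{b}\bullet\fr{a}$ and hence $R(\fr{a},\fr{b},\fr{c})\iff R(\fr{b},\fr{a},\fr{c})$. Conversely, if $R$ is symmetric in its first two arguments then \eqref{eq:clprod} gives $U\cdot V=R[U,V,-]=R[V,U,-]=V\cdot U$ in $\Adf(\mathbf{S})$, and the duality transports commutativity back to $\mathbf{A}$. For (2), if $\mathbf{A}$ is integral then $1$ is the top element of $\mathbf{A}$, hence belongs to every (generalized) prime filter, so $E=S$. Conversely, if $E=S$ then in $\Adf(\mathbf{S})$ the monoid identity is the top element $S$, i.e.\ the top of the lattice equals $1$; passing back through the duality yields integrality of $\mathbf{A}$.

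The proof of (3) is the main point and the place where the topological separation in Priestley spaces is used in earnest. Working inside $\Adf(\mathbf{S})\cong\mathbf{A}$ (using integrality, so $E=S$), prelinearity is equivalent to the identity $(U\to V)\cup(V\to U)=S$ for all $U,V\in\Adf(S)$. For the forward direction, assume prelinearity and suppose $R(x,y,z)$, $R(x,v,w)$ hold with $y\not\leq w$ and $v\not\leq z$. By total order-disconnectedness choose clopen up-sets $U\ni y$ with $w\notin U$, and $V\ni v$ with $z\notin V$. Then $x\in S=(U\to V)\cup(V\to U)$ by prelinearity, and either alternative together with \eqref{eq:clres} contradicts the choice of $U,V$. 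For the converse, assume the relational condition and fix $U,V\in\Adf(S)$ and $x\in S$ with $x\notin U\to V$; pick witnesses $y\in U$ and $z\notin V$ with $R(x,y,z)$. For any $v\in V$ and any $w$ with $R(x,v,w)$, the relational hypothesis forces $y\leq w$ or $v\leq z$; the latter would place $z$ in the up-set $V$, contradicting $z\notin V$, so $y\leq w$ and hence $w\in U$. Thus $x\in V\to U$, establishing $(U\to V)\cup(V\to U)=S$.

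The step I expect to demand the most care is the converse of (3): keeping straight which of $U,V$ is being separated by which pair of points, and checking that the up-set property is applied in the correct direction so that one of the two disjuncts produced by the relational axiom really does yield a contradiction rather than an innocuous containment. Everything else reduces to bookkeeping on complex products of filters and to the transfer results already assembled above.
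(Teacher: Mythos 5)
Your proof is correct; all three correspondences are verified by the standard argument (forward directions checked directly on prime filters or via total order-disconnectedness, converses transported back through $\mathbf{A}\cong\Adf(\Xdf(\mathbf{A}))$ using the formulas for $\cdot$ and $\to$ on clopen up-sets). The paper itself does not prove this proposition but cites it to the literature (Cabrer--Celani), where essentially this same correspondence argument is carried out, so your treatment is in line with the intended one.
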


Due to the fact that semilinearity is axiomatized (modulo commutativity and distributivity) by $1\leq (a\to b)\join (b\to a)$, it follows from Proposition \ref{prop:correspond} that the unpointed residuated spaces corresponding to $\mtl$-algebras are exactly those satisfying the three conditions given above. We denote by $\mtl^\tau$ the full subcategory of ${\sf uRS}$ whose objects satisfy these three conditions. From the preceding remarks, the following is immediate.
\begin{theorem} $\mtl^\tau$ is dually equivalent to $\mtl$.
\end{theorem}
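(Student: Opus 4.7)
The plan is to derive this theorem as a direct specialization of Theorem \ref{thm:duality for RL}, using Proposition \ref{prop:correspond} to identify precisely which objects of ${\sf uRS}$ correspond dually to $\mtl$-algebras. The argument has essentially no new content: it amounts to verifying that the defining conditions of $\mtl$-algebras and of $\mtl^\tau$ line up perfectly under the functors $\Xdf$ and $\Adf$, and then invoking the general principle that a dual equivalence restricts to a dual equivalence on full subcategories carved out by matching object conditions.

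First I would note that $\mtl$ is, by definition, the full subcategory of bounded distributive residuated lattices consisting of algebras that are commutative, integral, and semilinear. Since semilinearity is axiomatized (modulo commutativity and distributivity) by $1\leq (a\to b)\join(b\to a)$, the three items of Proposition \ref{prop:correspond} collectively assert: for a bounded distributive residuated lattice ${\bf A}$ with dual $\Xd{\bf A}=({\bf S},R,E)$, the algebra ${\bf A}$ lies in $\mtl$ if and only if ${\bf S}$ satisfies the three relational conditions defining $\mtl^\tau$. Hence the object-level restriction of $\Xdf$ sends $\mtl$ into $\mtl^\tau$. Conversely, if $({\bf S},R,E)\in\mtl^\tau$, then the same proposition, read in the other direction, guarantees that $\Ad{\bf S}$ is commutative, integral, and prelinear, so $\Ad{\bf S}\in\mtl$.

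Next I would handle morphisms. Because $\mtl$ is a full subcategory of bounded distributive residuated lattices (every residuated lattice homomorphism between $\mtl$-algebras preserving the bounds is admitted) and $\mtl^\tau$ is by fiat a full subcategory of ${\sf uRS}$, there is nothing to check: any bounded morphism between spaces in $\mtl^\tau$ is automatically a morphism of $\mtl^\tau$, and likewise for the algebraic side. Thus $\Xdf$ and $\Adf$ restrict without modification to functors $\mtl\to\mtl^\tau$ and $\mtl^\tau\to\mtl$, and the natural isomorphisms $\mathrm{id}\Rightarrow\Adf\circ\Xdf$ and $\mathrm{id}\Rightarrow\Xdf\circ\Adf$ furnished by Theorem \ref{thm:duality for RL} restrict to these subcategories componentwise.

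I do not expect a genuine obstacle here; the argument is essentially bookkeeping. The only mildly delicate point is the one-to-one character of the correspondence in Proposition \ref{prop:correspond}(3), which is stated only \emph{in the presence of integrality and commutativity}. I would therefore present the three items of Proposition \ref{prop:correspond} in sequence so that integrality and commutativity are already in force when the prelinearity equivalence is invoked; this ensures that the full conjunction of the three algebraic conditions is equivalent to the full conjunction of the three relational conditions, and hence that the restriction of $\Xdf$ and $\Adf$ indeed lands in (and reflects) the subcategories in question.
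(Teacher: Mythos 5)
Your argument is exactly the paper's: the theorem is obtained as an immediate restriction of Theorem \ref{thm:duality for RL} to the full subcategories cut out by the three matching conditions of Proposition \ref{prop:correspond}, with semilinearity handled via the prelinearity identity. The proposal is correct and adds only the (reasonable) bookkeeping remarks the paper leaves implicit.
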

Although the theory outlined above is given in terms of bounded algebras, it may be modified to account for the half-bounded case. Our exposition of this modification is inspired by the extension in \cite{JR} of the Esakia duality for Heyting algebras to a duality for Brouwerian algebras. Given a $\gmtl$-algebra ${\bf A}$, we define a new algebra ${\bf A}_0:={\bf 2} \oplus {\bf A}$, whose universe is $A\cup\{0\}$. As in Proposition \ref{prop:zerodiv}, the $\mtl$-algebra ${\bf A}_0$ has no \emph{zero-divisors} in the sense that $a\cdot b=0$ implies $a=0$ or $b=0$. %on the universe $A\cup\{0\}$, where $0\notin A$ is a new element. The lattice order on ${\bf A}$ is uniquely determined by setting $0<a$ for all $a\in A$. For the multiplication and its residual, we define a new operation $*$ on $A\cup\{0\}$ by
%\[
%a * b = \begin{cases}
%a\cdot b & a,b\in A\\
%0 & a=0\text{ or }b=0
%\end{cases}
%\]
%This uniquely determines a residual $\sepimp$ of $*$ given by
%\[
%a \sepimp b = \begin{cases}
%a\to b & a,b\in A\\
%0 & b=0\text{ and }a\in A\\
%1 & a=0
%\end{cases}
%\]
%One can readily check that, with these modifications, ${\bf A}_0 = (A\cup\{0\}, \meet,\join,*,\sepimp,1,0)$ is a bounded CIDRL. Moreover, ${\bf A}_0\models (a\to b)\join (b\to a)=1$, whence ${\bf A}_0$ is an $\mtl$-algebra. As a consequence of the fact that ${\bf A}$ is a subalgebra of ${\bf A}_0$, the $\mtl$-algebra ${\bf A}_0$ has no \emph{zero-divisors} in the sense that $a*b=0$ implies $a=0$ or $b=0$.

Conversely, if ${\bf A}=(A,\meet,\join,\cdot,\to,1,0)$ is an $\mtl$-algebra with no zero-divisors, then $A\setminus \{0\}$ is the universe of a $(\meet,\join,\cdot,\to,1)$-subalgebra of ${\bf A}$. The fact that $A\setminus \{0\}$ is closed under $\cdot$ follows immediately from that fact that ${\bf A}$ has no zero-divisors. On the other hand, the identity $b\leq a\to b$, which holds in all integral CRLs, guarantees that $b\leq a\to b\neq 0$ provided that $b\neq 0$, whence $A\setminus\{0\}$ is closed under $\to$. We denote the resulting $(\meet,\join,\cdot,\to,1)$-subalgebra by ${\bf A}^0$, and observe that it is a $\gmtl$-algebra. 
%
%{\color{blue} Wesley: Not the correct place for this proposition, but it needs to be added somewhere.
%\begin{proposition}
%Let ${\bf A}$ be an $\mtl$-algebra. Then ${\bf A}$ is an $\sf{SMTL}$-algebra if and only if ${\bf A}$ has no zero divisors.
%\end{proposition}
%
%\begin{proof}
%
%\end{proof}}

It is easy to see that $({\bf A}_0)^0\cong {\bf A}$ for any $\gmtl$-algebra ${\bf A}$, and likewise that $({\bf A}^0)_0\cong {\bf A}$ for any $\mtl$-algebra ${\bf A}$ without zero divisors. This correspondence may be lifted to morphisms in the following way. Given ${\bf A}$ and ${\bf B}$ $\gmtl$-algebras and $f\colon {\bf A}\to {\bf B}$ a homomorphism, $f$ extends uniquely to a morphism of $\mtl$-algebras $f_0\colon {\bf A}_0\to {\bf B}_0$ by setting $f_0(a)=a$ for $a\in A$ and $f_0(0_{\bf A})=0_{\bf B}$. Moreover, suppose that ${\bf A}$ and ${\bf B}$ are $\mtl$-algebras without zero divisors and $f\colon{\bf A}\to {\bf B}$ is a morphism of $\mtl$. If $a\in A$ with $a\neq 0$, then from $a\cdot (a\to 0)=0_{\bf A}$ we must have that $a\to 0 = 0$, and hence $f(a\to 0) = 0$. Observe that if $f(a)=0$, then by residuation we have $1\leq f(a)\to 0=f(a)\to f(0)=f(a\to 0)$, contradicting $f(a\to 0)=0$. It follows that $f[A\setminus\{0\}]\subseteq B\setminus\{0\}$, and hence that $f$ restricts to a morphism $f^0\colon {\bf A}^0\to {\bf B}^0$ of $\gmtl$-algebras. It is clear that $(f^{0})_{0} = f$, and moreover, if $f \neq f'$, they necessarily differ on some element in $A\setminus\{0\}$, thus $f^{0} \neq f^{'0}$. The upshot of these observations, recalling also Proposition \ref{prop:zerodiv}, is the following.

\begin{theorem}\label{lem:equivalence of mtl and gtml}
Let $\mtl_{div}$ be the full subcategory of $\mtl$-algebras without zero divisors and ${\sf SMTL}_{ind}$ be the full subcategory of directly indecomposable ${\sf SMTL}$-algebras. Then the categories $\mtl_{div}$, $\gmtl$, and ${\sf SMTL}_{ind}$ are equivalent.
\end{theorem}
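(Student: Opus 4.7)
The plan is to break the theorem into two parts. First, I would invoke Proposition \ref{prop:zerodiv} directly: it establishes that the object classes of $\mtl_{div}$ and ${\sf SMTL}_{ind}$ coincide, and since both are full subcategories of $\mtl$ their morphism sets coincide as well, so $\mtl_{div} = {\sf SMTL}_{ind}$ on the nose. This reduces the theorem to producing an equivalence between $\gmtl$ and $\mtl_{div}$.

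For the remaining equivalence, I would assemble the ingredients developed in the paragraphs preceding the theorem into a pair of functors $(-)_0\colon \gmtl \to \mtl_{div}$ and $(-)^0\colon \mtl_{div} \to \gmtl$. On objects these are the assignments ${\bf A} \mapsto {\bf 2}\oplus {\bf A}$ and ${\bf A} \mapsto (A\setminus\{0\},\meet,\join,\cdot,\to,1)$, which the text has already shown land in the correct categories (the first has no zero divisors by construction, while the second is a $\gmtl$-algebra because $A\setminus\{0\}$ is closed under $\cdot$ by assumption and under $\to$ since $b\leq a\to b$ in any integral CRL). On morphisms, $(-)_0$ sends $f\colon {\bf A}\to{\bf B}$ to its unique extension $f_0$ with $f_0(0_{\bf A})=0_{\bf B}$, and $(-)^0$ sends $g\colon {\bf A}\to{\bf B}$ to $g\restriction_{A\setminus\{0\}}$, which is well-defined because the text has already shown that an $\mtl$-morphism between zero-divisor-free $\mtl$-algebras satisfies $g[A\setminus\{0\}]\subseteq B\setminus\{0\}$. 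Functoriality is immediate from these definitions.

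Next, I would upgrade the object-level isomorphisms $({\bf A}_0)^0 \cong {\bf A}$ and $({\bf A}^0)_0 \cong {\bf A}$ stated in the text to natural isomorphisms $\eta\colon {\sf Id}_{\gmtl}\Rightarrow (-)^0\circ(-)_0$ and $\varepsilon\colon (-)_0\circ (-)^0\Rightarrow {\sf Id}_{\mtl_{div}}$. These isomorphisms are canonical: as sets, $({\bf A}_0)^0 = (A\cup\{0\})\setminus\{0\}$ equipped with the restricted operations, which coincide with the original operations of ${\bf A}$, and similarly $({\bf A}^0)_0$ reintroduces exactly the bottom element of ${\bf A}\in \mtl_{div}$. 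Naturality of $\eta$ and $\varepsilon$ then reduces to the identities $(f_0)^0 = f$ (immediate) and $(g^0)_0 = g$, which follows because $g(0_{\bf A})=0_{\bf B}$ is forced by $g$ being an $\mtl$-homomorphism and $g^0$ agrees with $g$ on the remaining elements; this identity is already verified in the discussion preceding the theorem.

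The main technical labour is simply checking that the extended operations on ${\bf A}_0$ produce a genuine $\mtl$-algebra and that $f_0$ preserves every operation of $\mtl$. This is routine case analysis on whether arguments equal $0$, with residuation being the most delicate step (one uses that $0$ is absorbing for $\cdot$ and that $c\to 0 = 0$ for $c\neq 0$ in any zero-divisor-free $\mtl$-algebra). I do not expect a substantive obstacle anywhere: the genuine content of the theorem is entirely captured by Proposition \ref{prop:zerodiv} together with the constructions preceding the statement, and the proof is a matter of assembling those observations into the data of a categorical equivalence.
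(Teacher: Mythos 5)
Your proposal is correct and follows essentially the same route as the paper: the paper's proof consists precisely of the discussion preceding the theorem (the constructions $(-)_0$ and $(-)^0$, their behaviour on morphisms, and the identities $({\bf A}_0)^0\cong{\bf A}$, $({\bf A}^0)_0\cong{\bf A}$, $(f^0)_0=f$) combined with Proposition~\ref{prop:zerodiv} to identify $\mtl_{div}$ with ${\sf SMTL}_{ind}$. Your packaging of these observations into a pair of functors with natural isomorphisms is exactly the intended argument, and your identification of the routine verifications (that ${\bf 2}\oplus{\bf A}$ is an $\mtl$-algebra without zero divisors, and that $0$ is preserved by $\mtl$-morphisms) matches what the paper leaves implicit.
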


The foregoing lemma provides a duality for $\gmtl$ via the previously discussed duality for $\mtl$, using the full subcategory $\mtl_{div}^{\tau}$ as a bridge. In more detail, each $\gmtl$-algebra ${\bf A}$ may be associated to the $\mtl$-algebra ${\bf A}_0$, which by Theorem \ref{thm:duality for RL} has a dual $\Xd{{\bf A}_0} \in \mtl_{div}^{\tau}$. Owing to the fact that $A$ is a prime filter of ${\bf A}_0$, $\Xd{{\bf A}_0}$ has a maximum element for any $\gmtl$-algebra ${\bf A}$. Additionally, if $f\colon{\bf A}\to{\bf B}$ is a morphism of $\gmtl$, then by construction $f_0^{-1}[B]=A$, whence $\Xd{f_0}\colon\Xd{{\bf B}_0}\to \Xd{{\bf A}_0}$ preserves the greatest element of $\Xd{{\bf B}_0}$.

On the other hand, if ${\bf S}=(S,\leq,\tau,R,E)$ is an object of $\mtl^\tau$ with a top element $\top$, then it is easy to see that the collection of nonempty clopen up-sets of ${\bf S}$ is closed under $\cdot$, $\to$, $\cap$, $\cup$, and contains $E$. It follows that the nonempty clopen up-sets form a $(\meet,\join,\cdot,\to,1)$-subalgebra of $\Ad{\bf S}$, and therefore form a $\gmtl$-algebra. If $\alpha\colon{\bf S}_1\to {\bf S}_2$ is a morphism between top-bounded objects of $\mtl^\tau$ that preserves the top element, then we also have that $\top_2\in\Ad{\alpha}({\sf U})=\alpha^{-1}[{\sf U}]$ for any nonempty clopen up-set ${\sf U}$ of ${\bf S}_2$, so $\Ad{\alpha}$ restricts to a morphism between the corresponding $\gmtl$-algebras of nonempty clopen up-sets. Thus $\mtl^\tau_{div}$ is the full subcategory of $\mtl^\tau$ of spaces with a top element and top-preserving morphism. This discussion leads us to the following definition.

\begin{definition}
We define a category $\gmtl^\tau$ as follows. The objects of $\gmtl^\tau$ are structures of the form $({\bf S},R,E,\top)$, where $({\bf S},R,E)$ is an object of $\mtl^\tau$ with a greatest element $\top$. \\
The morphisms of $\gmtl^\tau$ are maps $\alpha\colon ({\bf S}_1,R_1,E_1,\top_1)\to ({\bf S}_2,R_2,E_2,\top_2)$ between objects of $\gmtl^\tau$, where $\alpha$ is a bounded morphism considered as a map between unpointed residuated spaces, and $\alpha(\top_1)=\top_2$.
\end{definition}

From the preceding discussion, we obtain the following duality for $\gmtl$-algebras.

\begin{theorem}
$\gmtl$ and $\gmtl^\tau$ are dually equivalent categories.
\end{theorem}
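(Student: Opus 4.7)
The plan is to derive this duality by combining the equivalence $\gmtl \simeq \mtl_{div}$ of Theorem \ref{lem:equivalence of mtl and gtml} with the duality $\mtl \simeq^{\mathrm{op}} \mtl^\tau$ (Theorem \ref{thm:duality for RL} specialized to $\mtl$ via Proposition \ref{prop:correspond}), showing that the latter restricts, in the presence of the former, to a dual equivalence between $\gmtl$ and the category $\gmtl^\tau$ of top-bounded $\mtl^\tau$-spaces with top-preserving morphisms. This is essentially the outline anticipated in the remark preceding the theorem statement, and I would carry it out by exhibiting explicit functors and natural isomorphisms built from the ones already established.

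Concretely, I would define $\Xdf^g \colon \gmtl \to \gmtl^\tau$ on objects by $\Xdf^g({\bf A}) = (\Xd{{\bf A}_0}, A)$, with top $A$ being the prime filter of ${\bf A}_0$ identified in the preamble. On a morphism $f \colon {\bf A} \to {\bf B}$, $\Xdf^g(f) = \Xd{f_0}$ preserves the top because $f_0^{-1}[B] = A$. Going the other direction, $\Adf^g \colon \gmtl^\tau \to \gmtl$ sends ${\bf S}$ to its $\gmtl$-algebra of nonempty clopen up-sets, and sends a top-preserving bounded morphism $\alpha$ to the restriction of $\alpha^{-1}[-]$ to nonempty clopen up-sets (well-defined since $\alpha(\top_1)=\top_2 \in U$ forces $\top_1 \in \alpha^{-1}[U]$).

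For the natural isomorphisms, the identity $\Adf^g \circ \Xdf^g \cong \mathrm{id}_{\gmtl}$ will follow from the Priestley-duality isomorphism ${\bf A}_0 \cong \Ad{\Xd{{\bf A}_0}}$, which sends $0$ to $\emptyset$ and hence restricts to an isomorphism between ${\bf A} \cong ({\bf A}_0)^0$ (by Theorem \ref{lem:equivalence of mtl and gtml}) and the nonempty clopen up-sets of $\Xd{{\bf A}_0}$. The reverse isomorphism $\Xdf^g \circ \Adf^g \cong \mathrm{id}_{\gmtl^\tau}$ follows by observing $(\Adf^g({\bf S}))_0 \cong \Ad{\bf S}$ and composing with the Priestley-duality isomorphism ${\bf S} \cong \Xd{\Ad{\bf S}}$; the resulting isomorphism already respects $R$ and $E$ by Theorem \ref{thm:duality for RL}, and it preserves the top because the image of $\Adf^g({\bf S})$, viewed as a prime filter of $\Ad{\bf S}$, is precisely the top filter in $\Xd{\Ad{\bf S}}$.

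I expect the main obstacle to be confirming the object-level well-definedness of $\Adf^g$: specifically, that the nonempty clopen up-sets of an arbitrary ${\bf S} \in \gmtl^\tau$ form a subalgebra of $\Ad{\bf S}$ closed under the residuated product $\cdot$. Unpacking, this requires $R[U,V,-] \neq \emptyset$ whenever $U,V$ are nonempty clopen up-sets, and will use that both $U$ and $V$ contain $\top$, combined with $E=S$ (integrality, Proposition \ref{prop:correspond}(2)) and axiom (5) of Definition \ref{def:unpointed residuated space}. Once this closure is verified---as the preamble asserts---the remaining tasks (continuity, preservation of relational structure, functoriality) are straightforward bookkeeping inherited from the underlying $\mtl$-duality.
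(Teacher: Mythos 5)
Your overall route is the paper's own: the theorem is obtained from the discussion immediately preceding it, by combining the equivalence of Theorem \ref{lem:equivalence of mtl and gtml} with the $\mtl$-duality restricted to top-bounded spaces, and the explicit functors and natural isomorphisms you describe are the intended ones. However, the step you yourself single out as ``the main obstacle'' --- that $R[U,V,-]\neq\emptyset$ for all nonempty clopen up-sets $U,V$ of an arbitrary object of $\gmtl^\tau$ --- is a genuine gap, and the argument you sketch for it does not close it. Axiom (5) of Definition \ref{def:unpointed residuated space} only controls products against $E$ itself, giving $R[U,S,-]=U$; and since $R$ is \emph{antitone} in its first two arguments (condition (2) of that definition), a witness $R({\sf x},{\sf y},{\sf z})$ with ${\sf y}\in S$ cannot be promoted to one with ${\sf y}\in V$, nor to $R(\top,\top,{\sf z}')$. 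Worse, the closure claim is simply false for the category as defined (objects of $\mtl^\tau$ that merely possess a greatest element): the extended Priestley dual of the standard MV-algebra on $[0,1]$ has greatest element $(0,1]$, yet $\varphi_{\bf A}(1/2)\cdot\varphi_{\bf A}(1/2)=\varphi_{\bf A}(1/2\cdot 1/2)=\varphi_{\bf A}(0)=\emptyset$. Existence of a top element of $\Xd{\bf A}$ only expresses that $a\meet b=0$ implies $a=0$ or $b=0$, which is strictly weaker than the absence of zero divisors; so the identification of top-bounded objects of $\mtl^\tau$ with duals of algebras in $\mtl_{div}$, on which both your proof and the paper's preamble rest, does not hold at this level of generality.

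The repair is to carve out the objects of $\gmtl^\tau$ by the condition actually dual to ``no zero divisors'': for all ${\sf x},{\sf y}\in S$ there exists ${\sf z}$ with $R({\sf x},{\sf y},{\sf z})$, i.e., the operation $\bullet$ of Section \ref{sec:functional} is total. This condition already forces the existence of $\top$, it makes your closure argument immediate (since $\top\in U\cap V$, one has $R[U,V,-]\supseteq\{{\sf z}:R(\top,\top,{\sf z})\}\neq\emptyset$), and it is exactly what is needed for the isomorphism $(\Adf^g({\bf S}))_0\cong\Ad{\bf S}$ underlying your second natural transformation, which requires $\Ad{\bf S}$ to be of the form ${\bf 2}\oplus{\bf A}'$. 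Equivalently, take $\gmtl^\tau$ to be the essential image of $\mtl_{div}$ under $\Xdf$, where closure under $\cdot$ follows from $\varphi_{\bf A}(a)\cdot\varphi_{\bf A}(b)=\varphi_{\bf A}(ab)$ with $ab\neq 0$ for $a,b\neq 0$. With either reading, the rest of your argument (functoriality, continuity, the unit and counit) is indeed the routine bookkeeping you describe.
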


\subsection{Functional residuated spaces for $\mtl$, $\gmtl$, and $\sbp$}\label{sec:functional}
Dualities for classes of distributive residuated lattices typically employ a ternary relation on dual structures as in the discussion above, in \cite{GalatosThesis, Celani, Urquhart}, and elsewhere. However, for semilinear residuated structures, the ternary relation on duals may be presented as a (possibly partially-defined) binary operation on the underlying Priestley dual of the lattice reduct. This \emph{functionality} of duals is thoroughly treated in \cite{Gehrke, FussPalm}, but as far as the authors are aware has not previously been explored in the context of $\mtl$ or $\gmtl$. The presentation of the ternary dual relation as a partial operation proves convenient in the sequel, so we now discuss functionality in the environment of $\mtl^\tau$ and $\gmtl^\tau$. The following technical lemma is fundamental, and its proof may be found, e.g., in \cite[Lemmas 6.8 and 6.9]{GalatosThesis}.
\begin{lemma}\label{lem:filter to prime filter}
Let ${\bf A}$ be a residuated lattice and let $\fr{a}$, $\fr{b}$, and $\fr{c}$ be filters of ${\bf A}$. Then the following hold.
\begin{enumerate}
\item The up-set of the complex product $\fr{a}\bullet\fr{b}=\upset\{a\cdot b : a\in\fr{a},b\in\fr{b}\}$ is also a filter.
\item If ${\bf A}$ has a distributive lattice reduct, $\fr{c}$ is prime, and $\fr{a}\bullet\fr{b}\subseteq\fr{c}$, then there exist prime filters $\fr{a}'$ and $\fr{b}'$ of ${\bf A}$ such that $\fr{a}\subseteq\fr{a}'$, $\fr{b}\subseteq\fr{b}'$, $\fr{a}'\bullet\fr{b}\subseteq\fr{c}$, and $\fr{a}\bullet\fr{b}'\subseteq\fr{c}$.
\end{enumerate}
\end{lemma}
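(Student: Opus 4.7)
For part (1), the set $\fr{a}\bullet\fr{b}$ is an up-set by construction, so it suffices to show nonemptiness and closure under binary meets. Nonemptiness follows since $1 \in \fr{a}, \fr{b}$ and $1\cdot 1 = 1$. For meet-closure, given $c_1, c_2 \in \fr{a}\bullet\fr{b}$ we have $a_i \in \fr{a}$, $b_i \in \fr{b}$ with $a_i \cdot b_i \leq c_i$ for $i = 1, 2$. Setting $a = a_1 \meet a_2 \in \fr{a}$ and $b = b_1 \meet b_2 \in \fr{b}$, monotonicity of $\cdot$ yields $a \cdot b \leq a_i \cdot b_i \leq c_i$, hence $a \cdot b \leq c_1 \meet c_2$. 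Thus $c_1 \meet c_2 \in \fr{a}\bullet\fr{b}$.

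For part (2), the plan is a standard Zorn's-lemma extension argument. Let $\mathcal{F}$ be the collection of filters $\fr{d}$ of ${\bf A}$ satisfying $\fr{a} \subseteq \fr{d}$ and $\fr{d}\bullet \fr{b} \subseteq \fr{c}$. The collection is nonempty (it contains $\fr{a}$) and closed under unions of chains (using part (1) to see that unions of chains of filters remain filters, and that the complex product condition is preserved). Zorn's lemma therefore produces a maximal element $\fr{a}' \in \mathcal{F}$. I claim $\fr{a}'$ is prime.

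Suppose for contradiction that $d_1 \join d_2 \in \fr{a}'$ but $d_1, d_2 \notin \fr{a}'$. The filter $\fr{a}_i$ generated by $\fr{a}' \cup \{d_i\}$ consists of elements above some $a \meet d_i$ with $a \in \fr{a}'$, and strictly contains $\fr{a}'$. By maximality, $\fr{a}_i \bullet \fr{b} \not\subseteq \fr{c}$, so there exist $a_i \in \fr{a}'$ and $g_i \in \fr{b}$ with $(a_i \meet d_i)\cdot g_i \notin \fr{c}$. Set $a = a_1 \meet a_2 \in \fr{a}'$ and $g = g_1 \meet g_2 \in \fr{b}$. Since $a \leq a_i$ and $g \leq g_i$ give $(a \meet d_i)\cdot g \leq (a_i \meet d_i) \cdot g_i$, and $\fr{c}$ is an up-set, the smaller element $(a \meet d_i)\cdot g$ is likewise not in $\fr{c}$. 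On the other hand, using lattice distributivity and distributivity of $\cdot$ over $\join$ (Proposition 2.1(3)),
\begin{align*}
(a \meet d_1)\cdot g \join (a \meet d_2)\cdot g &= \bigl((a \meet d_1) \join (a \meet d_2)\bigr) \cdot g\\
&= \bigl(a \meet (d_1 \join d_2)\bigr) \cdot g,
\end{align*}
which lies in $\fr{a}' \bullet \fr{b} \subseteq \fr{c}$ since $a \meet (d_1 \join d_2) \in \fr{a}'$. Primality of $\fr{c}$ forces one of the disjuncts into $\fr{c}$, contradicting the previous paragraph. Therefore $\fr{a}'$ is prime.

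The analogous construction applied to $\fr{b}$ with $\fr{a}'$ in place of $\fr{a}$ yields a prime filter $\fr{b}' \supseteq \fr{b}$ with $\fr{a}' \bullet \fr{b}' \subseteq \fr{c}$, and in particular $\fr{a} \bullet \fr{b}' \subseteq \fr{c}$. The main obstacle in the argument is precisely the primality verification for $\fr{a}'$: it is the only place where both lattice distributivity and the residuated-lattice distributivity $x \cdot (y \join z) = xy \join xz$ are needed in an essential way, and the delicate point is to arrange matters so that one can transfer the "not in $\fr{c}$" conclusion from the witnesses produced by maximality down to the common refinement $a, g$ used in the distributive identity.
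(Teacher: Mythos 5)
Your proof is correct and follows essentially the same route as the source the paper cites for this lemma (\cite[Lemmas 6.8 and 6.9]{GalatosThesis}): extend $\fr{a}$ by Zorn's lemma to a filter maximal with respect to $\fr{d}\bullet\fr{b}\subseteq\fr{c}$, and use lattice distributivity together with $x\cdot(y\join z)=xy\join xz$ to show the maximal filter is prime; the only (harmless) slip is invoking $1\in\fr{a},\fr{b}$ for nonemptiness, which is unnecessary and not quite right for non-integral residuated lattices, where nonemptiness of $\fr{a}\bullet\fr{b}$ follows simply from $\fr{a},\fr{b}\neq\emptyset$.
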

The previous lemma provides that $\bullet$ is an operation on the collection of filters of ${\bf A}$ for any residuated lattice ${\bf A}$, but in general $\fr{a}\bullet\fr{b}$ may fail to be prime even when $\fr{a},\fr{b}\in\Xd{\bf A}$. However, algebras in $\mtl$ and $\gmtl$ satisfy the identity
$$a\to (b\join c) = (a\to b)\join (a\to c),$$
as a consequence of their semilinearity. In this environment, we have the following.
\begin{lemma}\label{lem:filtmultwelldef}
Let ${\bf A}$ be a $\gmtl$-algebra or an $\mtl$-algebra, and let $\fr{a},\fr{b}$ be nonempty filters of ${\bf A}$. Then if either one of $\fr{a}$ or $\fr{b}$ is prime, we have that either  $\fr{a}\bullet\fr{b}$ is prime or $\fr{a}\bullet\fr{b}=A$.
\end{lemma}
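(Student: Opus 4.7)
The plan is to leverage the semilinearity-driven identity $b\to (c\join d) = (b\to c)\join (b\to d)$, together with the residuation law, to reduce primality of $\mathfrak{a}\bullet\mathfrak{b}$ to primality of $\mathfrak{a}$ (respectively $\mathfrak{b}$). By Lemma \ref{lem:filter to prime filter}(1), $\mathfrak{a}\bullet\mathfrak{b}$ is already known to be a filter, so it suffices to handle the dichotomy between properness and primality: I will show that whenever $\mathfrak{a}\bullet\mathfrak{b}$ is a proper filter, the primality of $\mathfrak{a}$ (or $\mathfrak{b}$) forces $\mathfrak{a}\bullet\mathfrak{b}$ to satisfy the prime filter condition. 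If instead $\mathfrak{a}\bullet\mathfrak{b}$ fails to be proper, then by definition of filter it must equal $A$, which is the second alternative of the statement.

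The core argument runs as follows. Assume without loss of generality that $\mathfrak{a}$ is prime, and that $\mathfrak{a}\bullet\mathfrak{b}\neq A$. Take any $c,d\in A$ with $c\join d\in \mathfrak{a}\bullet\mathfrak{b}$; by definition of $\bullet$ there exist $a\in \mathfrak{a}$ and $b\in \mathfrak{b}$ with $ab\leq c\join d$. Residuation gives $a\leq b\to (c\join d)$, and since ${\bf A}$ lies in \mtl{} or \gmtl, the distributive identity $b\to (c\join d) = (b\to c)\join (b\to d)$ from Proposition 2.1(5) applies, yielding $a\leq (b\to c)\join (b\to d)$. Upward closure of $\mathfrak{a}$ then places $(b\to c)\join (b\to d)$ into $\mathfrak{a}$, and primality of $\mathfrak{a}$ forces one disjunct — say $b\to c$ — to lie in $\mathfrak{a}$. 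Since $(b\to c)\cdot b\leq c$ by Proposition 2.1(1), we conclude $c\in \mathfrak{a}\bullet\mathfrak{b}$; the symmetric case places $d$ in $\mathfrak{a}\bullet\mathfrak{b}$.

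The argument when $\mathfrak{b}$ rather than $\mathfrak{a}$ is assumed prime is entirely symmetric, using instead the identity $a\to (c\join d) = (a\to c)\join (a\to d)$ after first rewriting $ab\leq c\join d$ as $b\leq a\to (c\join d)$. No obstacle of substance arises: the only subtle ingredient is the distribution of implication over join, which depends on semilinearity (equivalently, prelinearity) and is exactly the feature that distinguishes the \mtl/\gmtl{} setting from the general \textsf{CIDRL} setting where $\mathfrak{a}\bullet\mathfrak{b}$ need not be prime even when both factors are. The only bookkeeping required is to remember that ``prime filter'' here means proper prime filter, which is precisely why the conclusion takes the disjunctive form ``prime or equal to $A$.''
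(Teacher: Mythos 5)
Your proof is correct and follows essentially the same route as the paper's: both reduce primality of $\fr{a}\bullet\fr{b}$ to primality of $\fr{a}$ via residuation and the semilinearity identity $b\to(c\join d)=(b\to c)\join(b\to d)$, then recover $c$ or $d$ in $\fr{a}\bullet\fr{b}$ using $(b\to c)\cdot b\leq c$. No gaps.
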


\begin{proof}
That $\fr{a}\bullet\fr{b}$ is a filter follows from Lemma \ref{lem:filter to prime filter}(1). For primality, suppose without loss of generality that $\fr{a}$ is prime and let $a\join b \in\fr{a}\bullet\fr{b}$. Then there exists $c\in\fr{a}$, $d\in\fr{b}$ with $c\cdot d\leq a\join b$. By residuation, $c\leq d\to (a\join b) = (d\to a)\join (d\to b)$, and since $\fr{a}$ is upward-closed and prime it follows that $d\to a\in\fr{a}$ or $d\to b\in\fr{a}$. Then $(d\to a)\cdot d = d\cdot (d\to a)\leq a\in\fr{a}\bullet\fr{b}$ or $(d\to b)\cdot d = d\cdot (d\to b)\leq b\in\fr{a}\bullet\fr{b}$ as desired. Hence either $\fr{a}\bullet\fr{b}=A$, or $\fr{a}\bullet\fr{b}$ is a prime filter of ${\bf A}$.
\end{proof}

\begin{corollary}\label{cor:partialop}
If ${\bf A}$ is an $\gmtl$-algebra, then $\bullet$ is a binary operation on $\Xd{A}$. If ${\bf A}$ is an $\mtl$-algebra, then $\bullet$ is gives a partial operation on $\Xd{A}$ and is undefined for $\fr{a},\fr{b}\in\Xd{A}$ only if $\fr{a}\bullet\fr{b}=A$.
\end{corollary}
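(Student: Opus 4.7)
The plan is to read off both assertions directly from Lemma~\ref{lem:filtmultwelldef} by carefully unpacking the definition of $\Xd{A}$ in the two cases, which differ crucially in whether the improper filter $A$ itself is admitted into the dual space.

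\textbf{MTL case.} In this setting $\Xd{A}$ consists solely of the prime filters of ${\bf A}$. For $\fr{a},\fr{b}\in\Xd{A}$, Lemma~\ref{lem:filtmultwelldef} supplies the dichotomy: either $\fr{a}\bullet\fr{b}$ is a prime filter, in which case it lies in $\Xd{A}$ and $\bullet$ is defined at $(\fr{a},\fr{b})$, or else $\fr{a}\bullet\fr{b}=A$, in which case the result fails to land in $\Xd{A}$ and the operation is undefined at that pair. This matches the stated description of the partial operation precisely.

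\textbf{GMTL case.} Here the Priestley dual is pointed, so $\Xd{A}$ is enriched with the whole algebra $A$ as a generalized prime filter on top of the ordinary prime filters. For $\fr{a},\fr{b}\in\Xd{A}$, I would distinguish two subcases. If at least one of $\fr{a},\fr{b}$ is a proper prime filter, Lemma~\ref{lem:filtmultwelldef} applies and yields that $\fr{a}\bullet\fr{b}$ is either prime or equal to $A$; both options lie in $\Xd{A}$. If instead $\fr{a}=\fr{b}=A$, then closure of $A$ under the monoid operation gives $A\bullet A=A\in\Xd{A}$. In every case $\fr{a}\bullet\fr{b}\in\Xd{A}$, so $\bullet$ restricts to a total binary operation on $\Xd{A}$.

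There is no real obstacle; the argument is essentially bookkeeping on top of Lemma~\ref{lem:filtmultwelldef}. The only conceptual point worth flagging is that totality in the GMTL setting is forced by the top-pointed flavour of the associated duality, i.e., by admitting $A$ among the generalized prime filters, while in the MTL setting, where $A$ is excluded, one must concede partiality. One may optionally observe further that in the GMTL case $\fr{a}\bullet A=\fr{a}$ by integrality (since $a\cdot b\leq a\meet b\leq a$ places $a\cdot b$ in the up-set $\fr{a}$), which gives a slightly more explicit description of the behaviour of $\bullet$ at the top element of $\Xd{A}$, though this is not needed for the statement as written.
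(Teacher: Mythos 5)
Your proof is correct and follows essentially the same route as the paper: both read the claim off of Lemma~\ref{lem:filtmultwelldef} and then unpack whether the improper filter $A$ is admitted into $\Xd{A}$ in the $\gmtl$ versus the $\mtl$ setting, and your extra case distinction for $\fr{a}=\fr{b}=A$ in the $\gmtl$ case (where the lemma's hypothesis that one filter be prime technically fails) is a small gap in the paper's own wording that you handle correctly. One caution on your optional closing aside: the claim $\fr{a}\bullet A=\fr{a}$ is false --- by integrality $a\cdot b\leq b$, so every $b\in A$ lies in $\upset(\fr{a}\cdot A)$ and in fact $\fr{a}\bullet A=A$; this plays no role in your argument, but the inequality $a\cdot b\leq a$ puts $a$ above the product, not the product inside the up-set $\fr{a}$.
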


\begin{proof}
Lemma \ref{lem:filtmultwelldef} provides that if $\fr{a}$ and $\fr{b}$ are prime filters of a $\gmtl$-algebra ${\bf A}$, then $\fr{a}\bullet\fr{b}$ is either a prime filter or $A$. Hence $\fr{a}\bullet\fr{b}\in\Xd{A}$ when ${\bf A}$ is a $\gmtl$-algebra, and $\fr{a}\bullet\fr{b}\in\Xd{A}$ provided that $\fr{a}\bullet\fr{b}\neq A$ when ${\bf A}$ is an $\mtl$-algebra.
\end{proof}
The following provides a mechanism for defining $\bullet$ on abstract duals of $\mtl$ and $\gmtl$-algebras.

\begin{lemma}\label{lem:least R successor}
Let ${\bf S} = (S,\leq,\tau,R,E)$ be an object of $\mtl^\tau$. If $\sf{x},\sf{y},\sf{z}\in S$ with $R(\sf{x},\sf{y},\sf{z})$, then there exists a least element $\sf{z}'\in S$ such that $R(\sf{x},\sf{y},\sf{z}')$. If ${\bf S}$ is in $\gmtl^\tau$, then for any $\sf{x},\sf{y}\in S$ there exists a least $\sf{z}'\in S$ with $R(\sf{x},\sf{y},\sf{z}')$.
\end{lemma}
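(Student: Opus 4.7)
The plan is to reduce the statement to its algebraic counterpart via the dualities $\mtl\simeq\mtl^\tau$ and $\gmtl\simeq\gmtl^\tau$ established earlier in this section. Since the claim depends only on the isomorphism class of ${\bf S}$ in $\mtl^\tau$ (respectively $\gmtl^\tau$), we may assume without loss of generality that ${\bf S}=\Xd{\bf A}$ for some $\mtl$-algebra ${\bf A}$ (respectively, some $\gmtl$-algebra ${\bf A}$). Under this identification, points of $S$ are prime filters of ${\bf A}$, the order $\leq$ is $\subseteq$, and $R(\fr{x},\fr{y},\fr{z})$ holds precisely when $\fr{x}\bullet\fr{y}\subseteq\fr{z}$.

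For the first claim, suppose $\fr{x},\fr{y},\fr{z}\in\Xd{A}$ satisfy $\fr{x}\bullet\fr{y}\subseteq\fr{z}$. By Lemma \ref{lem:filter to prime filter}(1), the set $\fr{x}\bullet\fr{y}$ is itself a filter of ${\bf A}$, and by Lemma \ref{lem:filtmultwelldef} (applied using primality of $\fr{x}$), this filter is either prime or equals $A$. Since $\fr{x}\bullet\fr{y}\subseteq\fr{z}$ and $\fr{z}\neq A$, the second alternative is excluded, so $\fr{z}':=\fr{x}\bullet\fr{y}$ is a prime filter. Evidently $R(\fr{x},\fr{y},\fr{z}')$ holds, and any $\fr{z}''\in\Xd{A}$ with $R(\fr{x},\fr{y},\fr{z}'')$ satisfies $\fr{z}'=\fr{x}\bullet\fr{y}\subseteq\fr{z}''$, i.e., $\fr{z}'\leq\fr{z}''$ in ${\bf S}$.

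For the second claim, the algebra ${\bf A}$ is now a $\gmtl$-algebra, so Corollary \ref{cor:partialop} upgrades $\bullet$ to a total binary operation on $\Xd{A}$: the filter $\fr{x}\bullet\fr{y}$ is automatically prime for every pair $\fr{x},\fr{y}\in\Xd{A}$, without any hypothesis guaranteeing the existence of a dominating prime filter. Hence $\fr{z}':=\fr{x}\bullet\fr{y}$ once again serves as the required least element.

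I do not anticipate any substantive obstacle: the essential content has been packaged into Lemma \ref{lem:filter to prime filter}(1), Lemma \ref{lem:filtmultwelldef}, and Corollary \ref{cor:partialop}, so the remaining task is only the translation between the abstract relational presentation of $\mtl^\tau$ and $\gmtl^\tau$ and the concrete description of $R$ as a filter complex product. The one subtlety worth flagging is the need in the $\gmtl^\tau$ case to invoke the $\gmtl^\tau$-duality directly, rather than embedding into an $\mtl$-algebra via ${\bf A}_0$, since we want $\fr{z}'$ to correspond to a point of ${\bf S}$ (a prime filter of the $\gmtl$-algebra) rather than to a prime filter of the $\mtl$-algebra ${\bf A}_0$ that might happen to be $\{0\}$'s complement's closure, etc.
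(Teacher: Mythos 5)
Your proposal is correct and follows essentially the same route as the paper: transfer to $\Xd{\bf A}$ via the duality, observe that $\fr{x}\bullet\fr{y}$ is a filter that is either (generalized) prime or all of $A$ by Lemma \ref{lem:filtmultwelldef}, rule out the latter using $\fr{x}\bullet\fr{y}\subseteq\fr{z}\neq A$ (or, in the $\gmtl$ case, note that $A$ itself is a point of $\Xd{A}$), and take $\fr{z}'=\fr{x}\bullet\fr{y}$ as the least witness. The only nitpick is that in the $\gmtl^\tau$ case $\fr{x}\bullet\fr{y}$ need not be prime in the strict sense—it may equal $A$, which is admitted as a \emph{generalized} prime filter—but this does not affect the argument.
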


\begin{proof}
By the duality for $\mtl$, there exists an $\mtl$-algebra ${\bf A}$ such that ${\bf S}\cong\Xd{\bf A}$. Let $\alpha\colon{\bf S}\to\Xd{\bf A}$ be the map witnessing this isomorphism. Then $\alpha(\sf{x})$, $\alpha(\sf{y})$, and $\alpha(\sf{z})$ are prime filters of ${\bf A}$ and $R^{\Xd{\bf A}}(\alpha(\sf{x}),\alpha(\sf{y}),\alpha(\sf{z}))$, i.e., $\alpha(\sf{x})\bullet\alpha(\sf{y})\subseteq\alpha(\sf{z})$. By Lemma \ref{lem:filtmultwelldef}, $\alpha({\sf x})\bullet\alpha({\sf y})$ is either a prime filter of ${\bf A}$ or $\alpha({\sf x})\bullet\alpha({\sf y})=A$. Because $\alpha({\sf x})\bullet\alpha({\sf y})\subseteq \alpha ({\sf z})\neq A$, it follows that $\alpha(\sf{x})\bullet\alpha({\sf y})\in\Xd{\bf A}$. This implies that $R^{\Xd{\bf A}}(\alpha({\sf x}),\alpha({\sf y}),\alpha({\sf x})\bullet\alpha({\sf y}))$. Because $\alpha^{-1}$ is an isomorphism with respect to $R$ as well, we also have that $R({\sf x},{\sf y},\alpha^{-1}(\alpha({\sf x})\bullet\alpha({\sf y})))$. Moreover, if ${\sf z}\in S$ with $R({\sf x},{\sf y},{\sf z})$, then by the isomorphism we have $\alpha({\sf x})\bullet\alpha({\sf y})\subseteq\alpha({\sf z})$, and because $\alpha$ is an order isomorphism we must have $\alpha^{-1}(\alpha({\sf x})\bullet\alpha({\sf y}))\subseteq\alpha^{-1}(\alpha({\sf z}))={\sf z}$. Thus ${\sf z}'=\alpha^{-1}(\alpha({\sf x})\bullet\alpha({\sf y}))$ is the least ${\sf z}\in S$ such that $R({\sf x},{\sf y},{\sf z})$ as desired. The result for ${\bf S}$ being an object of $\gmtl^\tau$ follows by the same argument.
\end{proof}

Owing to Lemma \ref{lem:least R successor}, we may define a partial operation $\bullet$ for ${\bf S}$ an object of $\gmtl^\tau$ or $\mtl^\tau$ by setting
\[
{\sf x}\bullet{\sf y} = \begin{cases}
\min\{{\sf z}\in S : R({\sf x},{\sf y},{\sf z})\}, & \text{if }\{{\sf z}\in S : R({\sf x},{\sf y},{\sf z})\}\neq\emptyset\\
\text{undefined}, & \text{otherwise}
\end{cases}
\]
Observe that if ${\bf S}$ is an object of $\gmtl^\tau$, then $\bullet$ is a total operation. Hereinafter we adopt the convention that whenever we write a statement involving $\bullet$, we assert the statement only for those instances when $\bullet$ is defined. Hence when we say that an identity or inequality holds in an object ${\bf S}$ of $\mtl^\tau$, we mean in particular that it holds in those instances for which all occurrences of $\bullet$ in the identity or inequality are defined.

\begin{lemma}\label{lem:bullet properties}
Let ${\bf S}$ be an object of $\mtl^\tau$ or $\gmtl^\tau$, and let ${\sf x},{\sf y},{\sf z}\in S$.
\begin{enumerate}
\item $R({\sf x},{\sf y},{\sf z})$ iff ${\sf x}\bullet{\sf y}\leq {\sf z}$.
\item Each of the following holds.
\begin{enumerate}
\item ${\sf x}\bullet ({\sf y}\bullet{\sf z}) = ({\sf x}\bullet {\sf y})\bullet{\sf z}$.
\item ${\sf x}\bullet {\sf y}={\sf y}\bullet {\sf x}$.
\item ${\sf x}\leq {\sf y}$ implies that ${\sf x}\bullet {\sf z}\leq {\sf y}\bullet {\sf z}$ and ${\sf z}\bullet {\sf x}\leq {\sf z}\bullet {\sf y}$.
\end{enumerate}
\end{enumerate}
\end{lemma}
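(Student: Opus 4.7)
My plan is to prove the parts in the order (1), (2)(c), (2)(b), (2)(a), since each later part uses the earlier ones. Throughout, I will adhere to the stated convention that statements involving $\bullet$ are asserted only when the occurrences of $\bullet$ are defined; this is automatic in $\gmtl^\tau$ and requires care only in $\mtl^\tau$.

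For part (1), the forward direction is immediate from the definition of $\bullet$: if $R({\sf x},{\sf y},{\sf z})$, then $\{{\sf w}\in S : R({\sf x},{\sf y},{\sf w})\}$ is nonempty, so ${\sf x}\bullet{\sf y}$ is defined and, being the minimum of this set, satisfies ${\sf x}\bullet{\sf y}\leq {\sf z}$. For the converse, if ${\sf x}\bullet{\sf y}\leq {\sf z}$, then ${\sf x}\bullet{\sf y}$ is defined and $R({\sf x},{\sf y},{\sf x}\bullet{\sf y})$ holds by definition; applying condition (2) of Definition~\ref{def:unpointed residuated space} with ${\sf x}\leq{\sf x}$, ${\sf y}\leq{\sf y}$, and ${\sf x}\bullet{\sf y}\leq{\sf z}$ yields $R({\sf x},{\sf y},{\sf z})$.

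For part (2)(c), assume ${\sf x}\leq {\sf y}$ and that ${\sf y}\bullet{\sf z}$ is defined. By part (1), $R({\sf y},{\sf z},{\sf y}\bullet{\sf z})$ holds, and then condition (2) of Definition~\ref{def:unpointed residuated space} (applied with first coordinate decreased from ${\sf y}$ to ${\sf x}$) gives $R({\sf x},{\sf z},{\sf y}\bullet{\sf z})$. In particular ${\sf x}\bullet{\sf z}$ is defined, and applying part (1) once more yields ${\sf x}\bullet{\sf z}\leq{\sf y}\bullet{\sf z}$. The right-multiplication version is proved symmetrically. For part (2)(b), I invoke Proposition~\ref{prop:correspond}(1), which tells us that the commutativity present in $\mtl$ and $\gmtl$ is dualized to the symmetry $R({\sf x},{\sf y},{\sf z})\iff R({\sf y},{\sf x},{\sf z})$. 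This gives $\{{\sf w} : R({\sf x},{\sf y},{\sf w})\} = \{{\sf w} : R({\sf y},{\sf x},{\sf w})\}$, so either both sets are empty (and both products are undefined) or they share a common minimum, which is the desired equality ${\sf x}\bullet{\sf y}={\sf y}\bullet{\sf x}$.

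Finally, for associativity (2)(a), the argument rests on condition (1) of Definition~\ref{def:unpointed residuated space}. Setting $w=({\sf x}\bullet{\sf y})\bullet{\sf z}$, part (1) of the lemma gives $R({\sf x},{\sf y},{\sf x}\bullet{\sf y})$ and $R({\sf x}\bullet{\sf y},{\sf z},w)$; condition (1) of the definition then furnishes some $v\in S$ with $R({\sf y},{\sf z},v)$ and $R({\sf x},v,w)$. By part (1) of the lemma, ${\sf y}\bullet{\sf z}\leq v$ and ${\sf x}\bullet v\leq w$. Assuming ${\sf x}\bullet({\sf y}\bullet{\sf z})$ is defined, part (2)(c) lifts the inequality ${\sf y}\bullet{\sf z}\leq v$ to ${\sf x}\bullet({\sf y}\bullet{\sf z})\leq {\sf x}\bullet v\leq w=({\sf x}\bullet{\sf y})\bullet{\sf z}$. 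The reverse inequality is obtained by running the same argument using the other direction of the biconditional in condition (1) of the definition. The one point requiring minor care is the partiality in $\mtl^\tau$: each chain of inequalities must be accompanied by a verification that the intermediate products are defined, which happens automatically from $R$-relations witnessed by the definitional axioms. The main conceptual ingredient of the whole proof is essentially this translation of axioms (1) and (2) of Definition~\ref{def:unpointed residuated space} into the language of a partial operation via part (1).
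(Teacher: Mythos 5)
Your proof is correct. Part (1) is essentially the paper's argument (least witness from Lemma \ref{lem:least R successor}, plus isotonicity of $R$ in the third coordinate). For part (2), however, you take a genuinely different route: the paper proves (a), (b), (c) by transferring the problem through the duality --- choosing an $\mtl$- or $\gmtl$-algebra ${\bf A}$ with ${\bf S}\cong\Xd{\bf A}$, observing that the isomorphism preserves $\bullet$, and then quoting associativity, commutativity, and order-preservation of the complex product of filters --- whereas you derive the same facts intrinsically from the first-order axioms on $R$: condition (2) of Definition \ref{def:unpointed residuated space} gives (c), the symmetry condition built into the definition of $\mtl^\tau$ gives (b) (strictly, you should cite that defining condition of $\mtl^\tau$ rather than Proposition \ref{prop:correspond} itself, which is stated for duals of algebras, but the content is identical), and condition (1) combined with (c) and part (1) gives (a). The paper's route is shorter but leans on the representation theorem; yours is more elementary and self-contained, shows the properties hold for any structure satisfying the axioms independently of whether it is realized as a dual, and handles the definedness bookkeeping explicitly --- indeed your argument for (a) yields slightly more than the stated convention requires, namely that definedness of either association forces definedness of the other.
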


\begin{proof}
For (1), note that if $R({\sf x},{\sf y},{\sf z})$, then by Lemma \ref{lem:least R successor} there is a least ${\sf z}'\in S$ with $R({\sf x},{\sf y},{\sf z}')$, and by definition $z'={\sf x}\bullet {\sf y}$. This immediately yields ${\sf x}\bullet {\sf y}\leq {\sf z}$. For the converse, if ${\sf x}\bullet {\sf y}$ is defined, then by definition $R({\sf x},{\sf y},{\sf x}\bullet {\sf y})$. If ${\sf x}\bullet {\sf y}\leq {\sf z}$, then by the isotonicity of $R$ in the third coordinate we have $R({\sf x},{\sf y},{\sf z})$ as well.

For (2), observe that by the dualities for $\mtl$ (respectively, $\gmtl$) there exists an $\mtl$-algebra (respectively, $\gmtl$-algebra) ${\bf A}$ such that ${\bf S}\cong \Xd{\bf A}$. Let $\alpha\colon{\bf S}\to\Xd{\bf A}$ be an isomorphism. Then the proof of Lemma \ref{lem:least R successor} shows that ${\sf x}\bullet {\sf y}=\alpha^{-1}(\alpha({\sf x})\bullet\alpha({\sf y}))$, whence $\alpha({\sf x}\bullet{\sf y})=\alpha({\sf x})\bullet\alpha({\sf y})$. The properties (a), (b), and (c) then follow from the fact that multiplication of filters of an $\mtl$-algebra or $\gmtl$-algebra are associative, commutative, and order-preserving.
\end{proof}

The fact that the multiplication of an $\mtl$- or $\gmtl$-algebra is residuated is reflected in the the order-preservation property of $\bullet$ as codified in Lemma \ref{lem:bullet properties}(2)(c), and it turns out that $\bullet$ possesses a partial residual as well.

\begin{proposition}
Let ${\bf A}$ be an $\mtl$-algebra or $\gmtl$-algebra, and suppose that $\fr{b},\fr{c}\in\Xd{\bf A}$ are such that there exists $\fr{a}\in\Xd{\bf A}$ with $\fr{a}\bullet\fr{b}\subseteq\fr{c}$. Then there is a greatest such $\fr{a}$, and it is given by
$$\fr{b}\res \fr{c} := \bigcup \{\fr{a}\in\Xd{A} : \fr{a}\bullet \fr{b}\subseteq \fr{c}\}.$$
Moreover, $\fr{a}\bullet\fr{b}\subseteq \fr{c}$ if and only if $\fr{a}\subseteq\fr{b}\res\fr{c}$.
\end{proposition}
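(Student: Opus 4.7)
The plan is to let $\mathcal{F}=\{\fr{a}\in\Xd{A}:\fr{a}\bullet\fr{b}\subseteq\fr{c}\}$, which is nonempty by hypothesis, and show that $F:=\bigcup\mathcal{F}$ is the desired greatest prime filter. The main work is to verify that $\mathcal{F}$ is upward-directed with respect to inclusion; once this is done, standard facts about directed unions of prime filters in a distributive lattice will give that $F\in\Xd{A}$.

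For the directedness, given $\fr{a}_1,\fr{a}_2\in\mathcal{F}$, I would consider the ordinary (not necessarily prime) filter $\fr{a}_3=\upset\{a_1\meet a_2:a_1\in\fr{a}_1,\,a_2\in\fr{a}_2\}$ generated by $\fr{a}_1\cup\fr{a}_2$. Using item (4) of the Proposition on CIDRLs, namely $(a_1\meet a_2)\cdot b=(a_1\cdot b)\meet (a_2\cdot b)$, together with the fact that $\fr{c}$ is a filter containing $a_1\cdot b$ and $a_2\cdot b$, I would verify that $\fr{a}_3\bullet\fr{b}\subseteq\fr{c}$. Then I would invoke Lemma~\ref{lem:filter to prime filter}(2) (which applies precisely because $\fr{c}$ is prime and ${\bf A}$ has a distributive lattice reduct) to extend $\fr{a}_3$ to a prime filter $\fr{a}_3'$ still satisfying $\fr{a}_3'\bullet\fr{b}\subseteq\fr{c}$. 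Thus $\fr{a}_3'\in\mathcal{F}$ and contains $\fr{a}_1\cup\fr{a}_2$, which is what was needed. This step, and in particular the application of Lemma~\ref{lem:filter to prime filter}(2), is the crux of the argument.

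Once directedness is established, $F$ is upward-closed, closed under meets, and prime (primality because any $x\join y\in F$ lies in some single $\fr{a}\in\mathcal{F}$, which is prime). I would separately note that in the $\mtl$ case $F$ must be a proper prime filter: otherwise $F=A$ would force $0\in F\bullet\fr{b}\subseteq\fr{c}$, contradicting $\fr{c}\in\Xd{A}$. In the $\gmtl^\tau$ setting no such issue arises, since $A$ itself counts as a generalized prime filter. Hence $F\in\Xd{A}$ in both cases.

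Finally, I would check that $F\bullet\fr{b}\subseteq\fr{c}$: any $a\in F$ lies in some $\fr{a}\in\mathcal{F}$, so for $b\in\fr{b}$ we have $a\cdot b\in\fr{a}\bullet\fr{b}\subseteq\fr{c}$. By construction $\fr{a}\subseteq F$ for every $\fr{a}\in\mathcal{F}$, so $F$ is the greatest member of $\mathcal{F}$. The residuation property then follows easily: if $\fr{a}\bullet\fr{b}\subseteq\fr{c}$ then $\fr{a}\in\mathcal{F}$ so $\fr{a}\subseteq F$; conversely, if $\fr{a}\subseteq F$ then the monotonicity of $\bullet$ in the first argument (an immediate consequence of its definition as a complex product) gives $\fr{a}\bullet\fr{b}\subseteq F\bullet\fr{b}\subseteq\fr{c}$.
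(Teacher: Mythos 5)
Your proposal is correct and follows essentially the same route as the paper: the crux in both is to take the filter generated by $\fr{a}_1\cup\fr{a}_2$, use distributivity of the product over meet to see it still multiplies $\fr{b}$ into $\fr{c}$, and then apply Lemma \ref{lem:filter to prime filter}(2) to extend to a prime filter. Packaging this as upward-directedness of $\mathcal{F}$ (rather than verifying meet-closure of the union directly, as the paper does) and proving the reverse residuation implication by monotonicity of $\bullet$ (rather than the paper's detour through $\fr{b}\bullet(\fr{b}\res\fr{c})$) are only cosmetic streamlinings.
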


\begin{proof}
We show first that $\fr{b}\res \fr{c}$ is a prime filter. For upward-closure, let $a\in \fr{b}\res \fr{c}$ and $a\leq b$. Then there exists $\fr{a}\in\Xd{A}$ with $a\in \fr{a}$ and $\fr{a}\bullet\fr{b}\subseteq \fr{c}$. But $\fr{a}$ is upward-closed, so $b\in \fr{a}$, and this gives that $b\in \fr{b}\res \fr{c}$.

For closure under meets, let $a,b\in \fr{b}\res \fr{c}$. Then there exist $\fr{a}_1,\fr{a}_2\in\Xd{A}$ with $a\in \fr{a}_1$, $b\in \fr{a}_2$, $\fr{a}_1\bullet \fr{b}\subseteq \fr{c}$, and $\fr{a}_2\bullet \fr{b}\subseteq \fr{c}$. Denote by $\fr{a}_1\join \fr{a}_2$ the filter generated by $\fr{a}_1\cup \fr{a}_2$. Then $a\meet b\in \fr{a}_1\join \fr{a}_2$. We claim that $(\fr{a}_1\join \fr{a}_2)\bullet \fr{b}\subseteq \fr{c}$. To see this, let $e\in (\fr{a}_1\join \fr{a}_2)\bullet\fr{b}$. Then there exist $c\in \fr{a}_1\join \fr{a}_2$ and $d\in \fr{b}$ with $c\cdot d\leq e$. But $c\in \fr{a}_1\join \fr{a}_2$ means that there exist $c_1\in \fr{a}_1$, $c_2\in \fr{a}_2$ with $c_1\meet c_2\leq c$. By hypothesis $c_1\cdot d\in \fr{a}_1\bullet \fr{b}\subseteq \fr{c}$ and $c_2\cdot d\in \fr{a}_2\bullet \fr{b}\subseteq \fr{c}$. Because $\fr{c}$ is closed under meets, $(c_1\cdot d)\meet (c_2\cdot d)\in \fr{c}$. Since multiplication distributes over meet in $\mtl$- and $\gmtl$-algebras, this gives that $(c_1\meet c_2)\cdot d\in \fr{c}$, whence since $\bullet$ is order-preserving by Lemma \ref{lem:bullet properties}(2)(c), $(c_1\meet c_2)\cdot d\leq c\cdot d\leq e$ is in $\fr{c}$. This shows that $(\fr{a}_1\join \fr{a}_2)\cdot \fr{b}\subseteq \fr{c}$.

Now $\fr{a}_1\join \fr{a}_2$ need not be prime, but Lemma \ref{lem:filter to prime filter}(2) provides that there exists a prime filter $\fr{p}$ such that $\fr{a}_1\join \fr{a}_2\subseteq \fr{p}$ and $\fr{p}\bullet \fr{b}\subseteq \fr{c}$. We must then have $a\meet b\in \fr{p}$ and $\fr{p}\cdot \fr{b}\subseteq \fr{c}$, so $a\meet b\in \fr{b}\res \fr{c}$ as claimed. This shows that $\fr{b}\res \fr{c}$ is a filter.

It now suffices to show that $\fr{b}\res \fr{c}$ is prime. Let $a\join b\in y\res z$. Then there exists $\fr{a}\in\Xd{A}$ with $a\join b\in \fr{a}$ and $\fr{a}\bullet \fr{b}\subseteq \fr{c}$. But $\fr{a}$ is prime, whence $a\join b\in \fr{a}$ implies $a\in \fr{a}$ or $b\in \fr{a}$. This shows that $a\in \fr{b}\res \fr{c}$ or $b\in \fr{b}\res\fr{c}$, which shows that $\fr{b}\res \fr{c}$ is prime. Observe that $\fr{b}\res\fr{c}\subseteq\fr{c}\neq A$, whence if $\fr{c}\neq A$ we have that $\fr{b}\res\fr{c}$ is proper. Hence $\fr{b}\res\fr{c}\in \Xd{\bf A}$.

For the rest, suppose that $\fr{a}\bullet\fr{b}\subseteq \fr{c}$. Then for each $a\in \fr{a}$ we have that $a\in \fr{b}\res \fr{c}$ by definition, so $\fr{a}\subseteq \fr{b}\res \fr{c}$. On the other hand, suppose that $\fr{a}\subseteq \fr{b}\res \fr{c}$. We will show that $\fr{a}\bullet \fr{b}\subseteq \fr{c}$. Since $\bullet$ is order-preserving and commutative by Lemma \ref{lem:bullet properties}(2), so we immediately obtain $\fr{a}\bullet\fr{b}\subseteq \fr{b}\bullet (\fr{b}\res \fr{c})$. Let $c\in \fr{b}\bullet (\fr{b}\res \fr{c})$. Then there exist $a\in \fr{b}$, $b\in \fr{b}\res \fr{c}$ such that $a\cdot b\leq c$. But $b\in \fr{b}\res \fr{c}$ provides that there exists $\fr{w}\in\Xd{A}$ with $\fr{w}\cdot \fr{b}\subseteq \fr{c}$ and $b\in \fr{w}$. This gives that $a\cdot b\in \fr{b}\bullet \fr{w}\subseteq \fr{c}$, so as $\fr{c}$ is upward-closed we have $c\in \fr{c}$. It follows that $\fr{a}\cdot \fr{b}\subseteq \fr{b}\bullet (\fr{b}\res \fr{c})\subseteq \fr{c}$. This yields that
$$\fr{a}\bullet\fr{b}\subseteq\fr{c} \text{ if and only if } \fr{a}\subseteq\fr{b}\res\fr{c}$$
This completes the proof.
\end{proof}

Because an abstract object ${\bf S}$ of $\mtl^\tau$ (respectively, $\gmtl^\tau$) is order-isomorphic to $\Xd{\bf A}$ for some $\mtl$-algebra (respectively, $\gmtl$-algebra) ${\bf A}$ and this isomorphism preserves $\bullet$, we immediately obtain the following.

\begin{corollary}
Let ${\bf S}$ be an object of $\mtl^\tau$ or $\gmtl^\tau$, and suppose that ${\sf y},{\sf z}\in S$ are such that there exists ${\sf x}\in S$ with $R({\sf x},{\sf y},{\sf z})$. Then there is a greatest such ${\sf x}$, which we denote by ${\sf y}\res{\sf z}$. Moreover, ${\sf x}\bullet{\sf y}\leq {\sf z}$ if and only if ${\sf x}\leq {\sf y}\res {\sf z}$.
\end{corollary}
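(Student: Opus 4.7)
The plan is to transfer the preceding proposition from the concrete setting of prime filters of an $\mtl$- or $\gmtl$-algebra to the abstract setting of $\mtl^\tau$ or $\gmtl^\tau$ via duality, mirroring the technique used in the proofs of Lemma \ref{lem:least R successor} and Lemma \ref{lem:bullet properties}. To this end, invoke the appropriate duality theorem to obtain an $\mtl$-algebra (respectively $\gmtl$-algebra) ${\bf A}$ together with an isomorphism $\alpha\colon {\bf S}\to \Xd{\bf A}$. By the observation recorded in the proof of Lemma \ref{lem:bullet properties}, $\alpha$ preserves the partial operation $\bullet$ in the sense that $\alpha({\sf x}\bullet{\sf y})=\alpha({\sf x})\bullet\alpha({\sf y})$ whenever defined, and consequently $R({\sf x},{\sf y},{\sf z})$ holds in ${\bf S}$ if and only if $\alpha({\sf x})\bullet\alpha({\sf y})\subseteq\alpha({\sf z})$ holds in $\Xd{\bf A}$.

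Next, I would use the hypothesis that some ${\sf x}\in S$ exists with $R({\sf x},{\sf y},{\sf z})$: the prime filter $\alpha({\sf x})$ then witnesses that the existence condition of the preceding proposition is satisfied for the pair $\alpha({\sf y}),\alpha({\sf z})\in\Xd{A}$. Hence the preceding proposition yields a greatest prime filter $\alpha({\sf y})\res\alpha({\sf z})\in \Xd{A}$ with $(\alpha({\sf y})\res\alpha({\sf z}))\bullet\alpha({\sf y})\subseteq \alpha({\sf z})$, along with the residuation equivalence $\fr{a}\bullet\alpha({\sf y})\subseteq\alpha({\sf z})\iff \fr{a}\subseteq\alpha({\sf y})\res\alpha({\sf z})$. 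Define ${\sf y}\res{\sf z}:=\alpha^{-1}(\alpha({\sf y})\res\alpha({\sf z}))$.

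Because $\alpha$ is an order isomorphism preserving $\bullet$, the chain of equivalences ${\sf x}\bullet{\sf y}\leq{\sf z}\iff \alpha({\sf x})\bullet\alpha({\sf y})\subseteq\alpha({\sf z})\iff \alpha({\sf x})\subseteq\alpha({\sf y})\res\alpha({\sf z})\iff{\sf x}\leq{\sf y}\res{\sf z}$ establishes the residuation law, and combined with Lemma \ref{lem:bullet properties}(1) it also yields that ${\sf y}\res{\sf z}$ is the greatest ${\sf x}\in S$ satisfying $R({\sf x},{\sf y},{\sf z})$. I do not anticipate a substantive obstacle: the primality of the residual and the residuation identity have already been verified in the preceding proposition, and the corollary amounts to lifting them across the dual equivalence, so the only careful point is confirming that $\alpha$ reflects (not merely preserves) the preorder and the relation $R$, which is immediate from its being an isomorphism in $\mtl^\tau$ or $\gmtl^\tau$.
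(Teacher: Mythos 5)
Your proposal is correct and is precisely the argument the paper intends: the corollary is stated as an immediate consequence of the preceding proposition via the order isomorphism $\alpha\colon{\bf S}\to\Xd{\bf A}$ furnished by duality, which preserves and reflects both the order and the partial operation $\bullet$. Your explicit spelling-out of the transfer (defining ${\sf y}\res{\sf z}=\alpha^{-1}(\alpha({\sf y})\res\alpha({\sf z}))$ and chaining the equivalences) matches the paper's approach exactly.
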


In considering multiplication on objects of $\mtl^\tau$ and $\gmtl^\tau$, it is sometimes convenient to consider the \emph{Routley star} (see \cite{RM1,RM2,Urquhart}). If ${\bf A}$ is an $\mtl$-algebra and $\fr{a}\in\Xd{A}$, we define
$$\fr{a}^*=\{a\in A : \neg a \not\in\fr{a}\}$$
It is easy to see that if $\fr{a}$ is a prime filter, then so is $\fr{a}^*$. Moreover, $^*$ is an order-reversing operation on prime filters.

\begin{lemma}\label{lem:routley star definition 1}
Let ${\bf A}$ be an $\mtl$-algebra and let $\fr{a}\in\Xd{A}$. Then $\fr{a}^*$ is the largest prime filter of ${\bf A}$ such that $\fr{a}\bullet\fr{a}^*\neq A$.
\end{lemma}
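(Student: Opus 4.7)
The plan is to unpack when a product $\fr{a}\bullet\fr{b}$ equals $A$ in terms of negation, and then match this up with the definition of $\fr{a}^*$. Recall that $\fr{a}\bullet\fr{b} = A$ if and only if $0\in \fr{a}\bullet\fr{b}$, and by definition of $\bullet$ and residuation this occurs precisely when there exist $a\in\fr{a}$ and $b\in\fr{b}$ with $a\cdot b\leq 0$, equivalently $b\leq a\to 0 = \neg a$. Since $\fr{b}$ is upward-closed, this is in turn equivalent to the existence of some $a\in\fr{a}$ with $\neg a\in\fr{b}$. Thus the key characterization I will use is
\[
\fr{a}\bullet\fr{b}\neq A \iff \text{for all } a\in\fr{a},\ \neg a\notin\fr{b}.
\]

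First I would verify $\fr{a}\bullet\fr{a}^*\neq A$. Using the characterization above (with $\fr{b}=\fr{a}^*$), it suffices to show $\neg a\notin\fr{a}^*$ for every $a\in\fr{a}$. By the definition of the Routley star, $\neg a\notin\fr{a}^*$ means $\neg\neg a\in\fr{a}$; but $a\leq \neg\neg a$ holds in any \mtl-algebra (identity (14) of Proposition 2.1), so upward-closure of $\fr{a}$ yields $\neg\neg a\in\fr{a}$, as required. Note also that $\fr{a}^*$ is a prime filter by the observation preceding the statement, so it is indeed a legitimate candidate in $\Xd{A}$.

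Next I would establish the maximality claim. Let $\fr{b}\in\Xd{A}$ with $\fr{a}\bullet\fr{b}\neq A$, and take any $b\in\fr{b}$; the goal is to show $b\in\fr{a}^*$, i.e., $\neg b\notin\fr{a}$. Suppose for contradiction that $\neg b\in\fr{a}$. Then since $b\cdot\neg b = b\cdot(b\to 0)\leq 0$, we obtain $0\in\fr{a}\bullet\fr{b}$, hence $\fr{a}\bullet\fr{b}=A$, contradicting the assumption. Thus $\neg b\notin\fr{a}$, so $b\in\fr{a}^*$, giving $\fr{b}\subseteq\fr{a}^*$.

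There is no significant obstacle here: the proof is really just a careful unpacking of the residuation-based characterization of when the complex product collapses to the improper filter $A$, together with the fact that $a\leq\neg\neg a$. The only minor subtlety worth writing out carefully is the equivalence $\fr{a}\bullet\fr{b} = A \iff \exists a\in\fr{a},\ \neg a\in\fr{b}$, which ties together the definition of $\bullet$, the law of residuation, and the upward-closure of $\fr{b}$, and is what makes both directions of the argument go through symmetrically.
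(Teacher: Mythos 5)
Your proof is correct and follows essentially the same route as the paper's: both arguments reduce $\fr{a}\bullet\fr{b}=A$ to the existence of $a\in\fr{a}$ with $\neg a\in\fr{b}$ via residuation, use $a\leq\neg\neg a$ to show $\fr{a}\bullet\fr{a}^*$ is proper, and derive $\fr{b}\subseteq\fr{a}^*$ from $\neg b\cdot b=0$. No gaps.
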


\begin{proof}
We will first show that $0\notin\fr{a}\bullet\fr{a}^*$. Toward a contradiction, suppose that $a\in\fr{a},b\in\fr{a}^*$ are such that $a\cdot b\leq 0$. Then by residuation $b\leq a\to 0=\neg a$, and since $\fr{a}^*$ is upward-closed it follows that $\neg a\in\fr{a}^*$. This implies that $\neg\neg a\notin\fr{a}$, and since $a\leq\neg\neg a$ and $\fr{a}$ is upward-closed we have $a\notin\fr{a}$. This contradicts the assumption that $a\in\fr{a}$, and hence $0\notin\fr{a}\bullet\fr{a}^*$. Therefore, $\fr{a}\bullet\fr{a}^*$ is proper.

Now suppose that $\fr{b}\not\subseteq\fr{a}^*$. Then there exists $b\in\fr{b}$ with $b\notin\fr{a}^*$, i.e., $\neg b\in\fr{a}$. It follows that $\neg b\cdot b\in\fr{a}\bullet\fr{b}$, so as $\neg b\cdot b = 0$ we have that $\fr{a}\bullet\fr{b}=A$. Thus $\fr{a}^*=\max\{\fr{b}\in\Xd{A} : \fr{a}\bullet\fr{b}\neq A\}$.
\end{proof}

\begin{corollary}\label{cor:routley star definition 2}
Let ${\bf S}$ be an object of $\mtl^{\tau}$, and let ${\sf x}\in S$. Then there exists a greatest ${\sf y}\in S$ such that $R({\sf x},{\sf y},{\sf z})$ for some ${\sf z}\in S$. Equivalently, there exists a greatest ${\sf y}\in S$ such that ${\sf x}\bullet{\sf y}$ is defined.
\end{corollary}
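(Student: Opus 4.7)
The plan is to lift Lemma~\ref{lem:routley star definition 1} to abstract objects of $\mtl^{\tau}$ via the duality between $\mtl$ and $\mtl^{\tau}$. That duality supplies an $\mtl$-algebra ${\bf A}$ together with an isomorphism $\alpha \colon {\bf S} \to \Xd{\bf A}$; in particular $\alpha$ is both an order isomorphism and an isomorphism with respect to the ternary relation $R$. Given ${\sf x} \in S$, I would set ${\sf y}_{0} := \alpha^{-1}(\alpha({\sf x})^{*})$ and show this is the sought greatest element.

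The key step is to translate the algebraic criterion of Lemma~\ref{lem:routley star definition 1} into the relational language on ${\bf S}$. For prime filters $\fr{a}, \fr{b}$ of ${\bf A}$, Corollary~\ref{cor:partialop} gives that $\fr{a} \bullet \fr{b}$ is prime precisely when $\fr{a} \bullet \fr{b} \neq A$. On the one hand, if $\fr{a} \bullet \fr{b}$ is prime, it is itself a $\fr{c}\in\Xd{A}$ witnessing $R^{\Xd{\bf A}}(\fr{a}, \fr{b}, \fr{c})$. Conversely, if $R^{\Xd{\bf A}}(\fr{a},\fr{b},\fr{c})$ holds for some $\fr{c} \in \Xd{A}$, then $\fr{a}\bullet\fr{b} \subseteq \fr{c} \neq A$, so $\fr{a} \bullet \fr{b} \neq A$. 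Thus the condition ``$\fr{a} \bullet \fr{b} \neq A$'' appearing in Lemma~\ref{lem:routley star definition 1} is equivalent to the existence of some $\fr{c} \in \Xd{A}$ with $R^{\Xd{\bf A}}(\fr{a}, \fr{b}, \fr{c})$.

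Applying this equivalence with $\fr{a} = \alpha({\sf x})$, Lemma~\ref{lem:routley star definition 1} yields that $\alpha({\sf x})^{*}$ is the greatest prime filter $\fr{b}$ admitting some $\fr{c}$ with $R^{\Xd{\bf A}}(\alpha({\sf x}), \fr{b}, \fr{c})$. Pulling back through $\alpha^{-1}$, which preserves both the order and $R$, gives that ${\sf y}_{0}$ is the greatest ${\sf y} \in S$ for which there exists ${\sf z} \in S$ with $R({\sf x},{\sf y},{\sf z})$. The equivalence with the formulation in terms of $\bullet$ is immediate from the definition of the partial operation $\bullet$ given just after Lemma~\ref{lem:least R successor}, since ${\sf x} \bullet {\sf y}$ is defined exactly when $\{{\sf z}\in S : R({\sf x},{\sf y},{\sf z})\}$ is nonempty. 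I do not anticipate any serious obstacle here: the substantive work is already concentrated in Lemma~\ref{lem:routley star definition 1}, and only careful bookkeeping through the duality isomorphism is required.
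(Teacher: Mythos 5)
Your proposal is correct and follows essentially the same route as the paper: invoke the duality to obtain an isomorphism $\alpha\colon{\bf S}\to\Xd{\bf A}$, apply Lemma \ref{lem:routley star definition 1} to identify $\alpha({\sf x})^*$ as the greatest filter whose product with $\alpha({\sf x})$ is proper, note (via Lemma \ref{lem:filtmultwelldef}/Corollary \ref{cor:partialop}) that properness of the product is equivalent to the existence of an $R$-witness, and pull everything back through $\alpha^{-1}$. No gaps.
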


\begin{proof}
By the duality for $\mtl$, there exists an $\mtl$-algebra ${\bf A}$ such that ${\bf S}\cong \Xd{\bf A}$. Let $\alpha\colon {\bf S}\to \Xd{\bf A}$ be an isomorphism in $\mtl^\tau$. Then $\alpha({\sf x})$ is a prime filter of ${\bf A}$, and by Lemma \ref{lem:routley star definition 1} we have that $\alpha({\sf x})^*$ is the greatest element of $\Xd{\bf A}$ multiplying with $\alpha({\sf x})$ to give a proper filter. Then $R^\Xd{\bf A}(\alpha({\sf x}),\alpha({\sf x})^*,\alpha({\sf x})\bullet\alpha({\sf x})^*)$, whence since $\alpha^{-1}$ is an isomorphism (following from the fact that $\alpha$ is), we have that $R({\sf x},\alpha^{-1}(\alpha({\sf x})^*),\alpha^{-1}(\alpha({\sf x})\bullet\alpha({\sf x})^*))$. Now suppose that ${\sf y}\in S$ is such that there exists ${\sf z}\in S$ with $R({\sf x},{\sf y},{\sf z})$. Then $R^\Xd{\bf A}(\alpha({\sf x}),\alpha({\sf y}),\alpha({\sf z}))$, so by definition $\alpha({\sf x})\bullet\alpha({\sf y})\subseteq\alpha({\sf z})$. In particular, this means that $\alpha({\sf x})\bullet\alpha({\sf y})\neq A$, so by Lemma \ref{lem:routley star definition 1} it follows that $\alpha({\sf y})\subseteq\alpha({\sf x})^*$. Because $\alpha^{-1}$ is an order-isomorphism, we then have that ${\sf y}\leq\alpha^{-1}(\alpha({\sf x})^*)$. This shows that $\alpha^{-1}(\alpha({\sf x})^*) = \max\{{\sf y}\in S : \exists {\sf z}\in S, R({\sf x},{\sf y},{\sf z})\}$ as desired.
\end{proof}

In light of the previous corollary, for an abstract object ${\bf S}$ of $\mtl^\tau$ we define for any ${\sf x}\in S$,
$${\sf x}^*:=\max\{{\sf y}\in S : \exists {\sf z}\in S, R({\sf x},{\sf y},{\sf z})\}.$$
The operation just defined provides a convenient language for discussing phenomena in $\mtl^\tau$, and will be used extensively later.

\section{Representation of dual spaces by filter pairs}\label{sec:filterpairs}
As an initial step toward dualizing the quadruple construction of Section \ref{sec:quadruples}, we develop technical material relating the prime filters of an $\sbp$-algebra ${\bf A}$ to those of $\bool{\bf A}$ and $\rad{\bf A}$. Given an $\sbp$-algebra ${\bf A}$ and $\fr{a}\in\Xd{\bf A}$, the sets $\fr{a}\cap\bool{\bf A}$ and $\fr{a}\cap\rad{\bf A}$ are an ultrafilter of the Boolean skeleton of ${\bf A}$ and a generalized prime filter of the radical of ${\bf A}$, respectively. Conversely, for each $\sbp$-algebra ${\bf A}$ we define a collection $\Da{A}\subseteq \Xd{\bool{\bf A}}\times\Xd{\rad{\bf A}}$ as follows.
\begin{definition} $(\fr{u},\fr{x})\in\Da{A}$ if and only if
\begin{equation}\label{eq:da}\forall (u,x)\in\bool{\bf A}\times\rad{\bf A}, u\join x\in\fr{x} \text{ implies } u\in\fr{u}\text{ or }x\in\fr{x}.\end{equation}
$\Da{A}$ may be endowed with the obvious product order, i.e., by $(\fr{u},\fr{x})\subseteq (\fr{v},\fr{y})$ if and only if $\fr{u}\subseteq\fr{v}$ and $\fr{x}\subseteq\fr{y}$. In light of the fact that $\fr{u}$ and $\fr{v}$ are ultrafilters, $\fr{u}\subseteq\fr{v}$ entails that $\fr{u}=\fr{v}$. Thus $(\fr{u},\fr{x})\subseteq (\fr{v},\fr{y})$ if and only if $\fr{u}=\fr{v}$ and $\fr{x}\subseteq\fr{y}$. 
\end{definition}
Notice that for every prime filter $\fr{x}$ of $\delta[\rad{\bf A}]$, where $\delta\colon\rad{\bf A}\to\rad{\bf A}$ is the wdl-admissible map defined by $\delta(x)=\neg\neg x$, we have
$$\delta^{-1}[\fr{x}] = \rm{max}\{\fr{y} \in \Xd{\rad{{\bf A}}} : \delta[\fr{y}] = \fr{x}\}.$$
Indeed, it is easy to check that $\delta^{-1}[\fr{x}]$ is a prime filter, and any other prime filter $\fr{y}$ such that $\delta[\fr{y}] = \fr{x}$ is contained in $\delta^{-1}[\fr{x}]$.
\begin{definition}
Let us define: $\Da{A}^\partial = \{(\fr{u}, \fr{y}) \in \Xd{\bool{\bf A}}\times\Xd{\delta[\rad{\bf A}]}: (\fr{u},\delta^{-1}[\fr{y}])\in\Da{A} \mbox{ and } \delta^{-1}[\fr{y}]\neq\rad{\bf A}\}.$ \end{definition}
We will show that the dual space of an $\sbp$-algebra ${\bf A}$ may be realized as a structure constructed on a disjoint union of $\Da{A}$ with $\Da{A}^\partial$. For clarity of our exposition, we will decorate the members of $\Da{A}^\partial$ with an initial $+$, so that we may write $+(\fr{u}, \fr{y}) \in \Da{A}^\partial$ iff 
\begin{equation}\label{eq:Fdual}(\fr{u}, \fr{y}) \in \Xd{\bool{\bf A}}\times\Xd{\delta[\rad{\bf A}]}, (\fr{u},\delta^{-1}[\fr{y}])\in\Da{A} \mbox{ and } \delta^{-1}[\fr{y}]\neq\rad{\bf A}.\end{equation}
The notation $+$ is motivated by the fact that the members of $\Da{A}^\partial$ will represent an ``upper'' or ``positive'' portion of the dual space of ${\bf A}$, as is evident from the definition we make presently. 
\begin{definition}\label{def:fdual}Set $\Da{A}^{\bowtie} = \Da{A}\dot\cup\Da{A}^\partial$. We endow $\Da{A}^{\bowtie}$ with a partial order $\sqsubseteq$ given by $\fr{p}\sqsubseteq \fr{q}$ if and only if
\begin{enumerate}
\item $\fr{p}=(\fr{u},\fr{x})$ and $\fr{q}=(\fr{v},\fr{y})$ for some $(\fr{u},\fr{x}),(\fr{v},\fr{y})\in\Da{A}$ with $(\fr{u},\fr{x})\subseteq (\fr{v},\fr{y})$,
\item $\fr{p}=+(\fr{u}',\fr{x}')$ and $\fr{q}=+(\fr{v}',\fr{y}')$ for some $+(\fr{u}',\fr{x}'),+(\fr{v}',\fr{y}')\in\Da{A}^\partial$ with $(\fr{v}',\fr{y}')\subseteq (\fr{u}',\fr{x}')$, or
\item $\fr{p}=(\fr{u},\fr{x})$ and $\fr{q}=+(\fr{v},\fr{y})$ for some $(\fr{u},\fr{x})\in\Da{A}$, $(\fr{v},\fr{y})\in\Da{A}^\partial$ with $\fr{u}=\fr{v}$.
\end{enumerate}
\end{definition}
Intuitively, Condition \ref{eq:da} is a form of \emph{external primality}. Just as prime filters correspond to points in the dual space of a distributive lattice, filter pairs satisfying external primality (along with their ``positive'' companions) correspond to points in the dual quadruples construction to follow. An abstract treatment of external primality will prove important for this purpose, and we next turn to providing such an abstract description. 
 
Let ${\bf A}$ be an $\sbp$-algebra, and suppose that $\fr{a}\in\Xd{A}$. Then each $u\in\bool{\bf A}$ satisfies $u\join\neg u = 1\in\fr{a}$, and by primality either $u\in\fr{a}$ or $\neg u\in\fr{a}$. This entails that each $\fr{a}\in\Xd{A}$ contains an ultrafilter of $\bool{\bf A}$, and since $\fr{a}$ is a proper filter this must be the only ultrafilter of $\bool{\bf A}$ contained in $\fr{a}$. 

\begin{definition}For each $\fr{a}\in\Xd{A}$ we hence denote by $\fr{u}_\fr{a}$ the unique ultrafilter of $\bool{\bf A}$ contained in $\fr{a}$, and call it the \emph{ultrafilter of $\fr{a}$}. Furthermore, we say that $\fr{u}\in\bool{\bf A}$ \emph{fixes} $\fr{x}\in\Xd{\rad{A}}$ if there exists $\fr{a}\in\Xd{A}$ such that $\fr{u}\subseteq\fr{a}$ and $\fr{x}=\fr{a}\cap\rad{\bf A}$. 
\end{definition}
Notice that for each $\fr{a}\in\Xd{\bf A}$ we trivially have that $\fr{u}_\fr{a}$ fixes the generalized prime filter $\fr{a}\cap\rad{\bf A}$. 
The following lemma explains this terminology.

\begin{lemma}\label{lem:fixedpoints}
Let ${\bf A}$ be an $\sbp$-algebra, and define for each $u\in\bool{\bf A}$ a map $\mu_u\colon \Xd{\rad{\bf A}}\to\Xd{\rad{\bf A}}$ by $\mu_u(\fr{x})=\{x\in \rad{\bf A} : u\join x\in\fr{x}\}=\upnu_u^{-1}[\fr{x}]$. Then
\begin{enumerate}
\item $\fr{a}\cap\rad{\bf A}$ is a generalized prime filter of $\rad{\bf A}$ and is fixed by each of the maps $\mu_u$ for $u\notin\fr{u}_\fr{a}$.
\item Conversely, if $\fr{x}=\fr{a}\cap\rad{\bf A}$ is proper and $\fr{u}$ is an ultrafilter of $\bool{\bf A}$ such that $\fr{x}$ is fixed by each $\mu_u$, $u\notin\fr{u}$, then $\fr{u}\subseteq\fr{a}$ and thus $\fr{u}=\fr{u}_\fr{a}$.
\end{enumerate}
\end{lemma}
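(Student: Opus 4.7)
\medskip

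\textbf{Plan.} For part (1), I would begin by identifying $\fr{u}_\fr{a}$ with $\bool{\bf A}\cap\fr{a}$: one inclusion is immediate from the definition of $\fr{u}_\fr{a}$, and for the reverse I would observe that $\bool{\bf A}\cap\fr{a}$ is a filter of $\bool{\bf A}$ that meets each pair $\{u,\neg u\}$ (since $u\join\neg u=1\in\fr{a}$ and $\fr{a}$ is prime), hence is itself an ultrafilter of $\bool{\bf A}$ sitting inside $\fr{a}$, which by uniqueness must be $\fr{u}_\fr{a}$. The fact that $\fr{a}\cap\rad{\bf A}$ is a generalized prime filter of $\rad{\bf A}$ is straightforward: it is a filter because $\rad{\bf A}$ is closed under the operations inherited from ${\bf A}$, and primality transfers from $\fr{a}$ (if it is proper at all, which may fail only if $\rad{\bf A}\subseteq\fr{a}$).

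For the fixed-point statement in (1), let $\fr{x}=\fr{a}\cap\rad{\bf A}$ and take $u\notin\fr{u}_\fr{a}$, so $\neg u\in\fr{a}$. Given $x\in\fr{x}$, I would note that $u\join x\geq x\in\fr{a}$ and that $u\join x\in\rad{\bf A}$, because $\neg(u\join x)=\neg u\meet\neg x\leq\neg x<x\leq u\join x$ using Proposition~2.3(12) and the characterization \eqref{eq:radical} of the radical; hence $x\in\mu_u(\fr{x})$. Conversely, if $x\in\mu_u(\fr{x})$, then $u\join x\in\fr{a}$ and primality of $\fr{a}$ yields $u\in\fr{a}$ or $x\in\fr{a}$; the first case is ruled out by $u\notin\bool{\bf A}\cap\fr{a}=\fr{u}_\fr{a}$, so $x\in\fr{a}\cap\rad{\bf A}=\fr{x}$.

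For part (2), I would argue by contradiction. Suppose some $u\in\fr{u}$ fails to lie in $\fr{a}$. Since $u\join\neg u=1\in\fr{a}$ and $\fr{a}$ is prime, this forces $\neg u\in\fr{a}$; and $\neg u\notin\fr{u}$ because $\fr{u}$ is an ultrafilter containing $u$. The hypothesis then supplies $\mu_{\neg u}(\fr{x})=\fr{x}$. But for an arbitrary $x\in\rad{\bf A}$, the element $\neg u\join x$ dominates $\neg u\in\fr{a}$ and lies in $\rad{\bf A}$ (by the same radical computation as above applied to $\neg u\join x$), so $\neg u\join x\in\fr{x}$, which gives $x\in\mu_{\neg u}(\fr{x})=\fr{x}$. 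This forces $\rad{\bf A}\subseteq\fr{x}$, contradicting the assumption that $\fr{x}$ is proper. Hence $\fr{u}\subseteq\fr{a}$, and then $\fr{u}=\fr{u}_\fr{a}$ by the uniqueness of the ultrafilter of $\bool{\bf A}$ contained in $\fr{a}$.

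The only place where care is needed is the small bookkeeping step that $u\join x$ stays in $\rad{\bf A}$ when $x\in\rad{\bf A}$, since this is what lets us apply the defining condition $\mu_u(\fr{x})=\fr{x}$ without leaving $\rad{\bf A}$; once that is in hand, both directions follow by a clean primality argument, and I do not anticipate a genuine obstacle.
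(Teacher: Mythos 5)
Your proof is correct and follows essentially the same route as the paper's: both parts rest on the primality of $\fr{a}$ together with upward-closure of $\fr{a}$ and $\rad{\bf A}$, and your part (2) is the same contradiction argument showing $\rad{\bf A}\subseteq\fr{x}$ when some $u\in\fr{u}$ misses $\fr{a}$. The only difference is that you make explicit two small points the paper leaves implicit — the identification $\fr{u}_\fr{a}=\bool{\bf A}\cap\fr{a}$ and the check that $u\join x$ remains in $\rad{\bf A}$ (which also follows simply from $\rad{\bf A}$ being an upward-closed filter) — and these are harmless additions.
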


\begin{proof}
For the first claim, it is obvious that $\fr{a}\cap\rad{\bf A}$ is a generalized prime filter of $\rad{\bf A}$ since both $\fr{a}$ and $\rad{\bf A}$ are upward closed, closed under meets, and given any $a,b\in\rad{\bf A}$ with $a\join b\in\fr{a}$, either $a\in\fr{a}\cap\rad{\bf A}$ or $b\in\fr{a}\cap\rad{\bf A}$. For the rest, let $u\notin\fr{u}_\fr{a}$ and set $\fr{x}=\fr{a}\cap\rad{\bf A}$. If $x\in\fr{x}$, then $x\leq u\join x$ gives $u\join x\in\fr{x}$ and hence $x\in\mu_u(\fr{x})$. This provides $\fr{x}\subseteq \mu_{u}(\fr{x})$. For the reverse inclusion, suppose that $x\in\mu_u(\fr{x})$. Then $u\join x\in\fr{x}$, so since $\fr{x}\subseteq\fr{a}$ we have $u\join x\in\fr{a}$. But $\fr{a}$ is a prime filter of ${\bf A}$, so this implies $u\in\fr{a}$ or $x\in\fr{a}$. The former is impossible since $u\notin\fr{u}_\fr{a}$, so $x\in\fr{a}$. Since $x$ was chosen from $\rad{\bf A}$, this implies that $x\in\fr{a}\cap\rad{\bf A}$, giving the result.

For the second claim, let $u\in\fr{u}$. Then $\neg u\notin\fr{u}$ since ultrafilters are proper, so $\fr{x}$ is fixed by the map $\mu_{\neg u}$ by hypothesis. If $u\notin\fr{a}$, then $\neg u\in\fr{a}$ by the primality of $\fr{a}$. For each $x\in\rad{\bf A}$ we have $\neg u,x\leq \neg u\join x$, and because both $\rad{\bf A}$ and $\fr{a}$ are upward-closed this gives $\neg u\join x\in\fr{a}\cap\rad{\bf A}=\fr{x}$. Since $\fr{x}$ is fixed by $\mu_{\neg u}$, this implies that $x\in\mu_{\neg u}(\fr{x})=\fr{x}$. It follows that $\rad{\bf A}\subseteq\fr{x}$, contradicting the assumption that $\fr{x}$ is proper. Thus $u\in\fr{a}$, yielding $\fr{u}\subseteq\fr{a}$. Since the ultrafilter of $\fr{a}$ is unique, it follows that $\fr{u}=\fr{u}_\fr{a}$.
\end{proof}

The choice of terminology above would be more evocative if ultrafilters were replaced by their complements in $\bool{\bf A}$: An ultrafilter $\fr{u}$ fixes a proper $\fr{x}\in\Xd{\rad{\bf A}}$ if and only if $\fr{x}$ is a fixed point of each of the maps $\mu_u$ for $u\in\fr{u}^\comp$. However, working with prime filters rather than prime ideals is desirable thanks to other considerations.

The notion of an ultrafilter of $\bool{\bf A}$ fixing a prime filter of $\rad{\bf A}$ will be fundamental to dualizing the construction of Section \ref{sec:quadruples}, and we therefore study it in detail.

\begin{lemma}\label{lem:allfixed}
Let $\fr{x}\in\Xd{\rad{\bf A}}$. Then there exists an ultrafilter $\fr{u}$ of $\bool{\bf A}$ such that $\fr{u}$ fixes $\fr{x}$.
\end{lemma}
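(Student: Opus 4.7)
The plan is to exhibit a prime filter $\fr{a} \in \Xd{\bf A}$ with $\fr{a} \cap \rad{\bf A} = \fr{x}$; then the unique ultrafilter $\fr{u}_\fr{a} \subseteq \fr{a}$ of $\bool{\bf A}$ (whose existence is discussed just before Lemma \ref{lem:fixedpoints}) will fix $\fr{x}$ by definition, witnessed by $\fr{a}$. To obtain such an $\fr{a}$, I will package the conditions as a filter/ideal pair inside the distributive lattice reduct of ${\bf A}$ and invoke the prime filter separation theorem.

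Concretely, first I would set $F := \upset_{\bf A}\fr{x}$, the filter of ${\bf A}$ generated by $\fr{x}$; this is proper because elements of $\rad{\bf A}$ satisfy $\neg x < x$, so $0 \notin \rad{\bf A}$ and hence $0 \notin F$. Next I would form $J := \rad{\bf A} \setminus \fr{x}$ and set $I := \downset_{\bf A} J$. The crucial point is that $I$ is a lattice ideal of ${\bf A}$: downward closure is automatic, and closure under binary joins reduces to checking that $J$ is closed under the join of ${\bf A}$. This in turn rests on two facts recalled in Section \ref{sec:quadruples}: $\rad{\bf A}$ is a $\gmtl$-subalgebra of ${\bf A}$ (so its join coincides with that of ${\bf A}$), and $\fr{x}$ is prime in $\rad{\bf A}$ (so the complement $J$ inside $\rad{\bf A}$ is join-closed). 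Moreover $F \cap I = \emptyset$, since any common element $a$ would satisfy $x \leq a \leq b$ for some $x \in \fr{x}$ and $b \in J \subseteq \rad{\bf A}$, and then upward-closure of $\fr{x}$ \emph{inside} $\rad{\bf A}$ would force $b \in \fr{x}$, contradicting $b \in J$.

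The prime filter separation theorem for distributive lattices then yields $\fr{a} \in \Xd{\bf A}$ with $F \subseteq \fr{a}$ and $\fr{a} \cap I = \emptyset$. The inclusion $\fr{x} \subseteq \fr{a} \cap \rad{\bf A}$ is immediate from $\fr{x} \subseteq F \subseteq \fr{a}$, while for the reverse any $a \in (\fr{a} \cap \rad{\bf A}) \setminus \fr{x}$ would lie in $J \subseteq I$, contradicting disjointness. Setting $\fr{u} := \fr{u}_\fr{a}$ finishes the proof. The only subtle step is verifying that $I$ is genuinely an ideal, which is exactly where we need $\rad{\bf A}$ to be a full $\gmtl$-subalgebra of ${\bf A}$ rather than merely a filter; the limit case $\fr{x} = \rad{\bf A}$ (which is allowed since $\Xd{\rad{\bf A}}$ includes the improper generalized prime filter) is handled automatically, as then $J = I = \emptyset$ and any prime filter of ${\bf A}$ extending the proper filter $\rad{\bf A}$ does the job.
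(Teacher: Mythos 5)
Your proof is correct and follows essentially the same route as the paper: form the ideal $\downset(\rad{\bf A}\setminus\fr{x})$ of ${\bf A}$, observe it is disjoint from the filter $\fr{x}$ (which, being upward-closed in the filter $\rad{\bf A}$, is already a filter of ${\bf A}$, so your $F=\upset_{\bf A}\fr{x}$ coincides with $\fr{x}$), and separate by a prime filter $\fr{a}$ whose ultrafilter then fixes $\fr{x}$. Your additional care about join-closure of the complement and the limit case $\fr{x}=\rad{\bf A}$ just fills in details the paper leaves implicit.
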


\begin{proof}
We must show that there exists a prime filter $\fr{a}$ of ${\bf A}$ such that $\fr{u}\subseteq\fr{a}$ and $\fr{x}=\fr{a}\cap\rad{\bf A}$. Set $\fr{i}=\rad{\bf A}\setminus\fr{x}$. Then $\fr{i}$ is trivially an ideal of $\rad{\bf A}$. Moreover, its down-set $\downset\fr{i} = \{a\in A : a\leq i\text{ for some }i\in\fr{i}\}$ considered in ${\bf A}$ may readily be seen to be an ideal of ${\bf A}$. Also, $\fr{x}$ is a filter of ${\bf A}$, and $\fr{x}\cap\downset\fr{i}=\emptyset$ by construction. It follows from the prime ideal theorem for distributive lattices that there exists $\fr{a}\in\Xd{\bf A}$ such that $\fr{a}\cap\downset\fr{i}=\emptyset$ and $\fr{x}\subseteq\fr{a}$. Then the ultrafilter $\fr{u}_\fr{a}$ of $\fr{a}$ fixes $\fr{x}$.
\end{proof}

A given $\fr{x}\in\Xd{\rad{\bf A}}$ may be fixed by many ultrafilters. As a stark example of this, it turns out that $\rad{\bf A}$ itself is fixed by \emph{every} ultrafilter of $\bool{\bf A}$.

\begin{lemma}\label{lem:radfixed}
Let $\fr{u}$ be an ultrafilter of $\bool{\bf A}$. Then there exists $\fr{a}\in\Xd{\bf A}$ such that $\fr{u}\subseteq\fr{a}$ and $\rad{\bf A}\subseteq\fr{a}$, whence $\fr{u}$ fixes $\rad{\bf A}$.
\end{lemma}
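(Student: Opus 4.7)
The plan is to exhibit a single prime filter $\fr{a}\in\Xd{{\bf A}}$ that contains both $\fr{u}$ and $\rad{\bf A}$; once such an $\fr{a}$ is produced, the conclusion is immediate, since $\rad{\bf A}\subseteq\fr{a}$ gives $\fr{a}\cap\rad{\bf A}=\rad{\bf A}$, and $\fr{u}$ being maximal among proper filters of $\bool{\bf A}$ forces $\fr{u}=\fr{u}_\fr{a}$.

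First I would verify that the up-set $F:=\upset\fr{u}$ in ${\bf A}$ is a proper deductive filter. Upward-closure holds by construction, and closure under $\meet$ and $\cdot$ both reduce to the observation that for $u_1,u_2\in\fr{u}$ one has $u_1\cdot u_2=u_1\meet u_2\in\fr{u}$ by Lemma \ref{lem:Boolean elements}(3). Properness follows from $0\notin\fr{u}$, since $0\in F$ would force some $u\in\fr{u}$ with $u\leq 0$.

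Next I would invoke Zorn's lemma to extend $F$ to a maximal deductive filter $\fr{m}$ of ${\bf A}$. Because $\rad{\bf A}$ is by definition the intersection of all maximal deductive filters, the inclusion $\rad{\bf A}\subseteq\fr{m}$ is automatic, and setting $\fr{a}:=\fr{m}$ satisfies the requirements of the lemma, modulo showing that $\fr{m}$ is lattice-prime so that $\fr{a}\in\Xd{{\bf A}}$.

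This last point is the main (but mild) obstacle: one needs the standard fact that maximal deductive filters of $\mtl$-algebras are lattice-prime. This follows from prelinearity together with the semilinear identity $(a\join b)^n=a^n\join b^n$: if $a\join b\in\fr{m}$ but $a,b\notin\fr{m}$, maximality supplies $m_a,m_b\in\fr{m}$ and $n\geq 1$ with $m_a\cdot a^n=m_b\cdot b^n=0$, whence $(m_a\meet m_b)\cdot(a\join b)^n=0$, contradicting $(a\join b)^n\in\fr{m}$. Alternatively, maximality can be bypassed entirely by arguing that the lattice filter generated by $\fr{u}\cup\rad{\bf A}$ in ${\bf A}$ is disjoint from $\{0\}$ (again using $u\cdot x=u\meet x$ for $u\in\bool{\bf A}$) and then invoking the prime filter separation theorem for distributive lattices directly.
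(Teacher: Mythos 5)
Your proof is correct, but your primary route is genuinely different from the paper's. The paper works entirely at the level of lattice filters: it forms the lattice filter generated by $\fr{u}\cup\rad{\bf A}$, shows it is proper by a short residuation argument (if $0=u\meet x$ with $u\in\fr{u}$, $x\in\rad{\bf A}$, then $u\cdot x\leq 0$ gives $x\leq\neg u$, so $\neg u\in\rad{\bf A}$ by upward-closure, forcing $\neg u=1$ and $u=0$, contradicting properness of $\fr{u}$), and then extends to a prime filter by the prime filter theorem --- this is precisely the ``alternative'' you sketch in your final sentence. Your main argument instead extends $\upset\fr{u}$ to a maximal \emph{deductive} filter $\fr{m}$, obtains $\rad{\bf A}\subseteq\fr{m}$ for free from the definition of the radical as the intersection of maximal deductive filters, and then verifies that $\fr{m}$ is lattice-prime via the semilinear identity $(a\join b)^n=a^n\join b^n$. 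Both arguments are sound. The paper's version is more self-contained, needing only the prime filter separation theorem together with $\bool{\bf A}\cap\rad{\bf A}=\{1\}$; yours is more conceptual in that it makes the containment of the radical automatic, but it pays for this with the standard yet nontrivial fact that maximal deductive filters of semilinear residuated lattices are lattice-prime --- which you do justify adequately, including the needed description of the deductive filter generated by $\fr{m}\cup\{a\}$ as the up-set of $\{m\cdot a^n : m\in\fr{m},\ n\geq 0\}$. (A minor citation slip: the identity $u_1\cdot u_2=u_1\meet u_2$ for Boolean elements is item (1), not (3), of the second list in the cited lemma.)
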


\begin{proof}
Let $\fr{f}$ be the filter of ${\bf A}$ generated by $\fr{u}\cup\rad{\bf A}$. We claim that $\fr{f}$ is proper. Indeed, suppose on the contrary that $0\in\fr{f}$. Then there exists $u\in\fr{u}$ and $x\in\rad{\bf A}$ such that $0=u\meet x$. It follows that $u\cdot x \leq 0$, and by residuation $x\leq u\to 0 = \neg u$. Since $\rad{\bf A}$ is upward-closed, this entails that $\neg u\in\rad{\bf A}$. But the only complemented element in $\rad{\bf A}$ is $1$, and hence $\neg u = 1$. This implies that $u=0$, which contradicts the assumption that $\fr{u}$ is a (necessarily proper) ultrafilter. Hence $\fr{f}$ is proper, and may therefore be extended to a prime filter $\fr{a}$ of ${\bf A}$. By construction $\fr{u}\subseteq\fr{a}$ and $\rad{\bf A}\subseteq\fr{a}$, proving the result.
\end{proof}

The following links external primality to the concept of an ultrafilter fixing a radical filter.

\begin{lemma}\label{lemmafixes}
Let ${\bf A}$ be an $\sbp$-algebra and let $\fr{x}\in\Xd{\scrR({\bf A})}$. Then $(\mathfrak{u},\mathfrak{x})\in \Da{A}$ if and only if $\mathfrak{u}$ fixes $\mathfrak{x}$.
\end{lemma}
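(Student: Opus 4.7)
The plan is to prove the two directions by fairly standard manipulations with prime filters, with the reverse implication being the technical heart of the argument. I will use throughout that $\fr{u} \in \Xd{\bool{\bf A}}$ is necessarily an ultrafilter of $\bool{\bf A}$.

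For the forward direction, suppose $\fr{u}$ fixes $\fr{x}$, witnessed by $\fr{a} \in \Xd{\bf A}$ with $\fr{u}\subseteq\fr{a}$ and $\fr{x} = \fr{a}\cap\rad{\bf A}$. By the uniqueness of the ultrafilter of $\bool{\bf A}$ contained in $\fr{a}$, we must have $\fr{u} = \fr{u}_\fr{a}$. Given $(u,x) \in \bool{\bf A}\times\rad{\bf A}$ with $u\vee x \in \fr{x}\subseteq\fr{a}$, primality of $\fr{a}$ yields $u\in\fr{a}$ or $x\in\fr{a}$; in the first case $u \in \fr{u}_\fr{a} = \fr{u}$ since $u$ is Boolean, and in the second case $x\in\fr{a}\cap\rad{\bf A} = \fr{x}$.

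For the reverse direction, if $\fr{x} = \rad{\bf A}$ the conclusion is immediate from Lemma \ref{lem:radfixed}, so assume $\fr{x}$ is a proper prime filter of $\rad{\bf A}$. Let $\fr{f}$ be the filter of ${\bf A}$ generated by $\fr{u}\cup\fr{x}$, whose elements are precisely those lying above some $u\meet x$ with $u\in\fr{u}$, $x\in\fr{x}$ (allowing $u=1$ or $x=1$). Let $\fr{i} = \rad{\bf A}\setminus\fr{x}$, a prime ideal of $\rad{\bf A}$, and let $\fr{I} = \downset\fr{i}$ be its down-closure in ${\bf A}$; closure of $\fr{I}$ under joins follows from the fact that $\rad{\bf A}$ is closed under binary joins in ${\bf A}$ together with primality of $\fr{x}$.

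The central claim is that $\fr{f}\cap\fr{I} = \emptyset$. Suppose otherwise, so $u\meet x\leq a\leq z$ for some $u\in\fr{u}$, $x\in\fr{x}$, $z\in\fr{i}$. By Lemma \ref{lem:Boolean elements}, $u\meet x = u\cdot x$ and $u\to z = \neg u\vee z$, so residuation yields $x\leq u\to z = \neg u \vee z$, and hence $\neg u\vee z \in \fr{x}$. External primality applied to $(\neg u, z) \in \bool{\bf A}\times\rad{\bf A}$ forces $\neg u\in\fr{u}$ or $z\in\fr{x}$, both absurd. The prime filter separation theorem for distributive lattices now supplies $\fr{a}\in\Xd{\bf A}$ with $\fr{f}\subseteq\fr{a}$ and $\fr{a}\cap\fr{I} = \emptyset$. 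One then reads off $\fr{u}\subseteq\fr{a}$ and $\fr{x}\subseteq\fr{a}\cap\rad{\bf A}\subseteq\rad{\bf A}\setminus\fr{i} = \fr{x}$, so $\fr{u}$ fixes $\fr{x}$.

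The main technical obstacle is precisely the verification that $\fr{f}\cap\fr{I}=\emptyset$, where the translation $u\meet x\leq z \leadsto x\leq\neg u\vee z$ must be carried out via the identities for Boolean elements and then matched against external primality with the right choice of arguments; once this is secured, the rest is a routine application of separation.
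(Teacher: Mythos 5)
Your proof is correct, and for the substantive direction it takes a genuinely different route from the paper's. For ``$\fr{u}$ fixes $\fr{x}$ implies $(\fr{u},\fr{x})\in\Da{A}$'' you argue directly from the primality of the witnessing prime filter $\fr{a}$, where the paper routes the same computation through the maps $\mu_u$ and Lemma \ref{lem:fixedpoints}(1); this is only a cosmetic difference. The real divergence is in the converse: the paper derives $\mu_u(\fr{x})=\fr{x}$ for all $u\notin\fr{u}$ from external primality and then invokes Lemma \ref{lem:allfixed} to produce some $\fr{a}$ with $\fr{x}=\fr{a}\cap\rad{\bf A}$ and Lemma \ref{lem:fixedpoints}(2) to conclude $\fr{u}\subseteq\fr{a}$, falling back on Lemma \ref{lem:radfixed} when $\fr{x}=\rad{\bf A}$. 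You instead run a single separation argument, placing $\fr{u}$ into the filter from the outset as $\fr{f}=\langle\fr{u}\cup\fr{x}\rangle$ and paying for this with the disjointness check $\fr{f}\cap\downset(\rad{\bf A}\setminus\fr{x})=\emptyset$, which you discharge correctly via $u\meet x=u\cdot x$, residuation, $u\to z=\neg u\join z$ (Lemma \ref{lem:Boolean elements}), and one application of external primality; your verification that $\downset(\rad{\bf A}\setminus\fr{x})$ is an ideal and your treatment of the improper case via Lemma \ref{lem:radfixed} are both sound. What your version buys is self-containedness --- it bypasses Lemmas \ref{lem:allfixed} and \ref{lem:fixedpoints}(2) entirely and is close in spirit to the later Proposition \ref{prop:prime} --- while the paper's version reuses lemmas it needs elsewhere anyway.
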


\begin{proof}
Suppose first that $(\mathfrak{u},\mathfrak{x})\in \Da{A}$. We must show that $\mathfrak{u}$ fixes $\mathfrak{x}$. Let $u\notin\mathfrak{u}$. it suffices to show that $\mu_u(\mathfrak{x})\subseteq\mathfrak{x}$ since the reverse inclusion always holds, so let $x\in\mu_u(\mathfrak{x})$, then $u\join x\in\mathfrak{x}$. The fact that $(\mathfrak{u},\mathfrak{x})\in \Da{A}$ gives that $u\in\mathfrak{u}$ or $x\in\mathfrak{x}$. But $u\notin\mathfrak{u}$ by assumption, so $x\in\mathfrak{x}$. It follows that $\mu_u(\mathfrak{x})=\mathfrak{x}$ for every $u\notin\mathfrak{x}$. If $\fr{x}$ is a proper filter, then $\mathfrak{u}$ fixes $\mathfrak{x}$ by Lemma \ref{lem:fixedpoints}(2). On the other hand, if $\fr{x}=\rad{\bf A}$, then $\fr{u}$ fixes $\fr{a}$ by Lemma \ref{lem:radfixed}. The result holds in either case.

For the converse, suppose that $\mathfrak{u}$ fixes $\mathfrak{x}$. That $\mathfrak{u}$ and $\mathfrak{x}$ are nonempty prime filters of the Boolean skeleton and the radical, respectively, holds by hypothesis. Let $u\in\scrB({\bf A})$ and $x\in\scrR({\bf A})$ with $u\join x\in\mathfrak{x}$, and suppose that $u\notin\mathfrak{u}$. Then the assumption that $\mathfrak{u}$ fixes $\mathfrak{x}$ implies that $\mathfrak{x}=\mu_u (\mathfrak{x})$ by Lemma \ref{lem:fixedpoints}(1). We have $u\join x\in\mathfrak{x}$ implies that $x\in\mu_u(\mathfrak{x})$ by definition, so it follows that $x\in\mathfrak{x}$. This entails that $u\in\mathfrak{u}$ or $x\in\mathfrak{x}$, giving Condition \ref{eq:da} and thus the result.
\end{proof}

For each $\fr{x}\in\Xd{\rad{\bf A}}$, Lemma \ref{lem:allfixed} shows that $\{\fr{u} : \fr{u}\text{ fixes }\fr{x}\}\neq\emptyset$. It follows that $\fr{f}_\fr{x} = \cap \{\fr{u} : \fr{u}\text{ fixes }\fr{x}\}$ is a nonempty filter of $\bool{\bf A}$ (being the intersection of nonempty filters). This filter is an ultrafilter if and only if there is a unique ultrafilter $\fr{u}$ fixing $\fr{x}$ (i.e., $\fr{f}_\fr{x}$ itself).

\begin{lemma}\label{lem:ultraintersect}
Let $u\notin\fr{f}_\fr{x}$. Then $\mu_u$ fixes $\fr{x}$.
\end{lemma}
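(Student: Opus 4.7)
The plan is to unwind the definition of $\fr{f}_\fr{x}$ and invoke the earlier Lemma \ref{lem:fixedpoints}(1) after locating a suitable witness ultrafilter. Note that the phrase ``$\mu_u$ fixes $\fr{x}$'' here abbreviates the assertion that $\fr{x}$ is a fixed point of $\mu_u$, i.e., $\mu_u(\fr{x})=\fr{x}$.

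First I would observe that the hypothesis $u\notin \fr{f}_\fr{x}=\bigcap\{\fr{u}:\fr{u}\text{ fixes }\fr{x}\}$ directly supplies a witness: some ultrafilter $\fr{u}$ of $\bool{\bf A}$ that fixes $\fr{x}$ and yet avoids $u$. Unpacking the definition of fixing, there exists $\fr{a}\in\Xd{\bf A}$ with $\fr{u}\subseteq\fr{a}$ and $\fr{x}=\fr{a}\cap\rad{\bf A}$. Since $\fr{u}_\fr{a}$ is the \emph{unique} ultrafilter of $\bool{\bf A}$ contained in $\fr{a}$, and $\fr{u}$ is already an ultrafilter of $\bool{\bf A}$ contained in $\fr{a}$, uniqueness forces $\fr{u}=\fr{u}_\fr{a}$. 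In particular, $u\notin\fr{u}_\fr{a}$.

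At that point the conclusion is immediate from Lemma \ref{lem:fixedpoints}(1): with $\fr{x}=\fr{a}\cap\rad{\bf A}$ and $u\notin\fr{u}_\fr{a}$, that lemma asserts $\mu_u(\fr{x})=\fr{x}$, which is exactly what we want. No further verification of primality or properness of $\fr{x}$ is needed, since $\fr{x}\in\Xd{\rad{\bf A}}$ by assumption and the argument above never requires manipulating the internal structure of $\fr{x}$.

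There is essentially no obstacle here; the argument is purely definitional, and the only subtle step is recognizing that the uniqueness clause packaged into the definition of $\fr{u}_\fr{a}$ is what allows an arbitrary witness ultrafilter from the intersection to be identified with $\fr{u}_\fr{a}$, thereby matching the hypothesis of Lemma \ref{lem:fixedpoints}(1). If one preferred a self-contained argument avoiding the reference to Lemma \ref{lem:fixedpoints}(1), one could also verify $\mu_u(\fr{x})\subseteq\fr{x}$ directly: for $x\in\mu_u(\fr{x})$ one has $u\vee x\in\fr{x}\subseteq\fr{a}$, whence by primality of $\fr{a}$ either $u\in\fr{a}$ (impossible since $u\notin\fr{u}=\fr{u}_\fr{a}$) or $x\in\fr{a}\cap\rad{\bf A}=\fr{x}$; the reverse inclusion $\fr{x}\subseteq\mu_u(\fr{x})$ is automatic from $x\leq u\vee x$.
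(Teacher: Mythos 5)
Your proof is correct and follows essentially the same route as the paper: extract from $u\notin\fr{f}_\fr{x}$ a witness ultrafilter $\fr{u}$ fixing $\fr{x}$ with $u\notin\fr{u}$, identify it with $\fr{u}_\fr{a}$ for the prime filter $\fr{a}$ witnessing the fixing, and apply Lemma \ref{lem:fixedpoints}(1). You merely spell out the identification $\fr{u}=\fr{u}_\fr{a}$ that the paper's two-line proof leaves implicit.
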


\begin{proof}
Because $u\notin\fr{f}_\fr{x}$ there exists an ultrafilter $\fr{u}$ of $\bool{\bf A}$ such that $\fr{u}$ fixes $\fr{x}$ and $u\notin\fr{u}$. Then $\mu_u(\fr{x})=\fr{x}$ by Lemma \ref{lem:fixedpoints}, which gives the result.
\end{proof}

The filter $\fr{f}_\fr{x}$ characterizes exactly which ultrafilters fix $\fr{x}$.

\begin{lemma}\label{lem:ultrachar}
Let $\fr{u}$ be an ultrafilter of $\bool{\bf A}$ and let $\fr{x}\in\Xd{A}$. Then $\fr{u}$ fixes $\fr{x}$ if and only if $\fr{f}_\fr{x}\subseteq\fr{u}$.
\end{lemma}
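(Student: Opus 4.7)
The plan is to dispatch the two directions separately, with the forward direction being essentially trivial and the backward direction reducing via the previous two lemmas to an appeal to Lemma \ref{lemmafixes}.

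For the forward direction, if $\fr{u}$ fixes $\fr{x}$, then $\fr{u}$ is a member of the collection $\{\fr{v} : \fr{v}\text{ fixes }\fr{x}\}$ whose intersection defines $\fr{f}_\fr{x}$. Hence $\fr{f}_\fr{x}\subseteq\fr{u}$ follows immediately from the definition, with nothing further to verify.

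For the converse, I would assume $\fr{f}_\fr{x}\subseteq\fr{u}$ and first extract from this the behavioral consequence: for every $u\in\bool{\bf A}\setminus\fr{u}$, we have $u\notin\fr{f}_\fr{x}$, so by Lemma \ref{lem:ultraintersect} the map $\mu_u$ fixes $\fr{x}$, meaning $\mu_u(\fr{x})=\fr{x}$. Next I would verify the external primality condition (\ref{eq:da}) for the pair $(\fr{u},\fr{x})$: given $(u,x)\in\bool{\bf A}\times\rad{\bf A}$ with $u\vee x\in\fr{x}$, if $u\notin\fr{u}$ then $\mu_u(\fr{x})=\fr{x}$ forces $x\in\mu_u(\fr{x})=\fr{x}$, and otherwise $u\in\fr{u}$ already; in either case the disjunction holds. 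Thus $(\fr{u},\fr{x})\in\Da{A}$, and Lemma \ref{lemmafixes} now yields that $\fr{u}$ fixes $\fr{x}$.

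I expect no significant obstacle here, since the substantive content has already been packaged into Lemmas \ref{lem:fixedpoints}, \ref{lemmafixes}, and \ref{lem:ultraintersect}. The only minor subtlety is the degenerate case $\fr{x}=\rad{\bf A}$, which is already handled inside Lemma \ref{lemmafixes} via the appeal to Lemma \ref{lem:radfixed}; consequently, the present statement goes through uniformly without separate case analysis.
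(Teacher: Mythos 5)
Your proof is correct and follows essentially the same route as the paper: the forward direction is immediate from the definition of $\fr{f}_\fr{x}$, and the converse hinges, exactly as in the paper, on Lemma \ref{lem:ultraintersect} giving $\mu_u(\fr{x})=\fr{x}$ for all $u\notin\fr{u}$. The only cosmetic difference is that you finish by verifying condition (\ref{eq:da}) and invoking Lemma \ref{lemmafixes}, whereas the paper splits into the cases $\fr{x}=\rad{\bf A}$ and $\fr{x}\neq\rad{\bf A}$ and applies Lemmas \ref{lem:radfixed}, \ref{lem:allfixed}, and \ref{lem:fixedpoints}(2) directly---but since Lemma \ref{lemmafixes} encapsulates precisely that case analysis, the two arguments are equivalent.
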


\begin{proof}
For the forward implication, suppose that $\fr{u}$ fixes $\fr{x}$. Then $\fr{f}_\fr{x}\subseteq\fr{u}$ since $\fr{f}_\fr{x}$ is an intersection of a set of ultrafilters containing $\fr{u}$.

For the reverse implication, note that if $\fr{x}=\rad{\bf A}$, then $\fr{u}$ trivially fixes $\fr{x}$ by Lemma \ref{lem:radfixed} (i.e., since every ultrafilter fixes the entire radical). If $\fr{x}\neq\rad{\bf A}$, then Lemma \ref{lem:allfixed} provides there exists $\fr{a}\in\Xd{A}$ such that $\fr{x}=\fr{a}\cap\rad{\bf A}$. Let $u\notin\fr{u}$. Then $u\notin\fr{f}_\fr{x}$ as $\fr{f}_\fr{x}\subseteq\fr{u}$, and by Lemma \ref{lem:ultraintersect} we have that $\mu_u$ fixes $\fr{x}$. It follows that $\fr{x}$ is fixed by each map $\mu_u$ for $u\notin\fr{u}$, and by Lemma \ref{lem:fixedpoints}(2) this yields that $\fr{u}\subseteq\fr{a}$. It follows that $\fr{u}$ fixes $\fr{x}$ as desired.
\end{proof}

The above lemma provides an intuitively-simple description of when $\fr{x}\in\Xd{\rad{\bf A}}$ is fixed by an ultrafilter $\fr{u}$: The ultrafilters fixing $\fr{x}$ are exactly those that extend $\fr{f}_\fr{x}$. We will now show how pairs $(\mathfrak{u}, \fr{x})$, where $\mathfrak{u}$ fixes $\fr{x}$, generate prime filters of \sbp-algebras.

\begin{proposition}\label{prop:prime}
Let $\mathfrak{u} \in \Xd{\scrB({\bf A})}$, $\mathfrak{x} \in \Xd{\scrR({\bf A})}$. If $\mathfrak{u}$ fixes $\mathfrak{x}$, then the generated filter $\mathfrak{p} = \langle \mathfrak{u} \cup \mathfrak{x} \rangle$ is prime. Moreover, $\mathfrak{p} \cap \scrB({\bf A})= \fr{u}$ and $\mathfrak{p} \cap \scrR({\bf A})= \fr{x}$. 
%Each prime $\ell$-filter of the radical that is fixed by some ultrafilter $\mathfrak{u}$, is the intersection of the radical with some prime $\ell$-filter of ${\bf A}$.
\end{proposition}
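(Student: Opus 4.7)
The plan splits the proof into two parts: the intersection identities $\fr{p} \cap \bool{\bf A} = \fr{u}$ and $\fr{p} \cap \rad{\bf A} = \fr{x}$ (the ``moreover'' clause), and the primality of $\fr{p}$. I would tackle the identities first, since they set up a useful characterization: an element $c \in A$ lies in $\fr{p}$ precisely when the Boolean and radical components of its representation from Section~\ref{sec:quadruples} lie in $\fr{u}$ and $\fr{x}$ respectively.

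For the identities, only the $\subseteq$ inclusions require work. Any $c \in \fr{p}$ is bounded below by $u \meet x = u \cdot x$ for some $u \in \fr{u}$, $x \in \fr{x}$ (using Lemma~\ref{lem:Boolean elements} to identify $\meet$ with $\cdot$ for Boolean $u$), so residuation yields $x \leq \neg u \join c$. If $c \in \bool{\bf A}$, then $\neg u \join c$ is a Boolean element majorizing the radical element $x$, and because the only Boolean element above a radical element is $1$, we get $c \geq u$ and hence $c \in \fr{u}$. If $c \in \rad{\bf A}$, then $\neg u \join c \in \rad{\bf A}$ with $\neg u \join c \geq x \in \fr{x}$, so $\neg u \join c \in \fr{x}$ by upward closure; since $\neg u \notin \fr{u}$, the external primality encoded in Lemma~\ref{lemmafixes} then forces $c \in \fr{x}$.

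For primality, I would invoke the quadruple representation: every $c \in A$ is of the form $[u_c, x_c]$ with $u_c \in \bool{\bf A}$, $x_c \in \rad{\bf A}$, and $c \geq u_c \meet x_c$. Combined with the intersection identities just established, $c \in \fr{p}$ if and only if $u_c \in \fr{u}$ and $x_c \in \fr{x}$. The AFU formula for $\sqcup$ expresses $a \join b = [u_a \join u_b, Y]$, where $Y$ is the meet of four factors each of the form $(v \join y)$ with $v \in \bool{\bf A}$ and $y \in \rad{\bf A}$. From $a \join b \in \fr{p}$ we extract $u_a \join u_b \in \fr{u}$, so primality of the ultrafilter $\fr{u}$ yields, without loss of generality, $u_a \in \fr{u}$. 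All four factors of $Y$ lie in $\fr{x}$ by upward closure, and I would apply external primality to whichever factor has its Boolean part outside $\fr{u}$: if $u_b \notin \fr{u}$, the factor $(\neg u_a \join u_b) \join x_a$ has Boolean part $\neg u_a \join u_b \notin \fr{u}$ (its complement $u_a \meet \neg u_b$ lies in $\fr{u}$), forcing $x_a \in \fr{x}$ and hence $a \in \fr{p}$; if $u_b \in \fr{u}$, the factor $(\neg u_a \join \neg u_b) \join (x_a \join x_b)$ has Boolean part outside $\fr{u}$, forcing $x_a \join x_b \in \fr{x}$, and since $x_a, x_b \in \rad{\bf A}$ and $\fr{x}$ is a prime filter of $\rad{\bf A}$, this splits to yield $a \in \fr{p}$ or $b \in \fr{p}$.

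The main obstacle is the calculational overhead in the primality step: unpacking the AFU formula for $\sqcup$ and carefully tracking which combinations of $u_a, u_b$ and their negations lie in the ultrafilter $\fr{u}$ is tedious, and one must resist the temptation of the more naive argument (suggested by trying to apply primality of $\fr{x}$ directly to $(\neg u \join a) \join (\neg u \join b)$, which fails since these joined summands need not lie in $\rad{\bf A}$). Conceptually, though, the argument is driven by a clean three-way interplay between ultrafilter primality of $\fr{u}$, prime-filter primality of $\fr{x}$ in $\rad{\bf A}$, and external primality, the last of these converting the ``$\fr{u}$ fixes $\fr{x}$'' hypothesis into the genuine splitting principle the proof requires.
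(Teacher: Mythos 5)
Your overall route is the same as the paper's: decompose $a$ and $b$ via the representation $a=(u_a\join\neg x_a)\meet(\neg u_a\join x_a)$, expand $a\join b$ by distributivity, show the Boolean datum $u_a\join u_b$ lands in $\fr{u}$, push the radical-valued terms into $\fr{x}$ via external primality, and finish with a case analysis on which of $u_a,u_b$ lie in $\fr{u}$ (invoking primality of $\fr{x}$ in $\rad{\bf A}$ only when both do). Your arguments for $\fr{p}\cap\bool{\bf A}=\fr{u}$ and $\fr{p}\cap\rad{\bf A}=\fr{x}$ are correct, and your final case split coincides with the paper's.

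The one genuine gap is the pivotal step ``from $a\join b\in\fr{p}$ we extract $u_a\join u_b\in\fr{u}$,'' which you justify by the characterization ``$c\in\fr{p}$ iff $u_c\in\fr{u}$ and $x_c\in\fr{x}$, combined with the intersection identities just established.'' The identities $\fr{p}\cap\bool{\bf A}=\fr{u}$ and $\fr{p}\cap\rad{\bf A}=\fr{x}$ govern only elements that themselves lie in $\bool{\bf A}$ or in $\rad{\bf A}$; they say nothing about the \emph{components} $u_c,x_c$ of a general $c\in\fr{p}$, and deducing $u_c\in\fr{u}$ from primality of $\fr{p}$ would be circular. Concretely, the term of the distributive expansion that carries this information is $(u_a\join u_b)\join(\neg x_a\join\neg x_b)$, which is neither Boolean nor radical, so neither identity nor external primality applies to it. What is actually needed is the paper's computation that $u\meet x\le(u_a\join u_b)\join(\neg x_a\join\neg x_b)$ forces $u\le u_a\join u_b$; equivalently, that no nonzero element $u\meet x$ with $u\in\bool{\bf A}$ and $x\in\rad{\bf A}$ lies below a coradical element. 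This is fillable from the fact you already use (the only Boolean element above a radical element is $1$): if $u\meet x\le\neg y$ with $y\in\rad{\bf A}$, then $u\cdot(xy)\le y\cdot\neg y=0$, so $xy\le\neg u$ and hence $\neg u=1$, $u=0$. So the plan goes through, but the justification you give for this step is not the right one, and it is exactly the calculation the paper must carry out in the quadruple representation. (Minor: ``all four factors lie in $\fr{x}$ by upward closure'' should be: the radical-valued terms majorize $a\join b$, hence lie in $\fr{p}$, hence in $\fr{x}$ by the identity $\fr{p}\cap\rad{\bf A}=\fr{x}$.)
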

\begin{proof}
Since $\mathfrak{u}$ and $\mathfrak{x}$ are closed under meets, we have that %$$\Phi^{-1}(\mathfrak{u}, \mathfrak{y}, \mathfrak{z}) = \{ a \in A \mid b \land c \land  \neg c' \leq a, \mbox{ with } b \in \mathfrak{u}, c \in \mathfrak{y}, c' \in Rad(A) \setminus \mathfrak{z}\}.$$
$$\mathfrak{p} = \{ a \in A \mid u \land x \leq a \mbox{ for some } u \in \mathfrak{u}, x \in \mathfrak{x}\}$$
%Notice that it is clear from the definition that $\alpha^{-1}$ is well defined map.
We will prove that $\mathfrak{p}$ is a prime filter of ${\bf A}$. Suppose that $a_{1} \lor a_{2} \in \mathfrak{p}$, where $a_{1} = (u_{1} \lor \neg x_{1}) \land (\neg u_{1} \lor x_{1})$ and $a_{2} =  (u_{2} \lor \neg x_{2}) \land (\neg u_{2} \lor x_{2})$ for some $u_1,u_2\in\bool{\bf A}$ and $x_1,x_2\in\rad{\bf A}$. We shall prove that $a_{1} \in \mathfrak{p}$ or $a_{2} \in \mathfrak{p}$, and also that $\mathfrak{p}$ is proper. %We distinguish two cases: (1)$ \mathfrak{z} = \delta[Rad(A)]$, (2) $\mathfrak{z} \neq \delta[Rad(A)]$. \vspace{.1cm}
Notice that $a_{1} \lor a_{2} \in \mathfrak{p}$ means $u \land x \leq a_{1} \lor a_{2}$, for some $u \in \mathfrak{u}, x \in \mathfrak{x}$. Using distributivity, we can write
\begin{align*}a_{1} \lor a_{2} = ((u_{1} \lor u_{2}) \lor (\neg x_{1} \lor \neg x_{2})) \land ((u_{1} \lor \neg u_{2}) \lor x_{2}) \land ((\neg u_{1} \lor u_{2}) \lor x_{1})\\ \land ((\neg u_{1} \lor \neg u_{2}) \lor (x_{1} \lor x_{2})).\end{align*}
Thus we get that $u \land x$ is bounded above by each of $(u_{1} \lor u_{2}) \lor (\neg x_{1} \lor \neg x_{2})$, $(u_{1} \lor \neg u_{2}) \lor x_{2}$, $(\neg u_{1} \lor u_{2}) \lor x_{1}$, and $(\neg u_{1} \lor \neg u_{2}) \lor (x_{1} \lor x_{2})$.
%$$
%\begin{array}{lll}
%b \land c &\leq& ((b_{1} \lor b_{2}) \lor (\neg c_{1} \lor \neg c_{2})),\\
%b \land c &\leq& ((b_{1} \lor \neg b_{2}) \lor c_{2}),\\
%b \land c &\leq& ((\neg b_{1} \lor b_{2}) \lor c_{2}),\\
%b \land c &\leq&  ((\neg b_{1} \lor \neg b_{2}) \lor (c_{1} \lor c_{2}))
%\end{array}
%$$
Consider the first one: $u \land x \leq (u_{1} \lor u_{2}) \lor (\neg x_{1} \lor \neg x_{2})$. This is possible if and only if $u \leq u_{1} \lor u_{2}$. To see this, note that since ${\bf A}$ isomorphic to $\scrB({\bf A}) \otimes_{e}^{\delta} \scrR({\bf A})$ (see Section \ref{sec:quadruples}), if we denote by $\lambda_{B}$ the isomorphism from $\scrB({\bf A})$ to $\scrB(\scrB({\bf A}) \otimes^{\delta}_{e} \scrR({\bf A}))$ and $\lambda_{R}$ the isomorphism from $\scrR({\bf A})$ to $\scrR(\scrB({\bf A}) \otimes_{e}^{\delta} \scrR({\bf A}))$, we get: \begin{align*} (\lambda_{B}(b) \sqcap \lambda_{R} (x)) &= [u, 1] \sqcap [1, x] = [u, \neg u \lor x],\\
\lambda_{B} (u_{1} \lor u_{2}) \sqcup \neg \lambda_{R}(x_{1} \land x_{2}) &= [u_{1} \lor u_{2}, 1] \sqcup [0, x_{1} \land x_{2}] \\&= [u_{1} \lor u_{2}, (u_{1} \lor u_{2}) \lor (x_{1} \land x_{2})].\end{align*}
Now note that $[u, \neg u \lor x] \land [u_{1} \lor u_{2}, (u_{1} \lor u_{2}) \lor (x_{1} \land x_{2})] = [u \land (u_{1} \lor u_{2}), \bar{x}],$
where $\bar{x}\in \scrR({\bf A})$ is calculated via the operations recalled in Section \ref{sec:quadruples}.
Via the isomorphism, $u \land x \leq (u_{1} \lor u_{2}) \lor (\neg x_{1} \lor \neg x_{2})$ corresponds to $[u \land (u_{1} \lor u_{2}), \bar{x}] = [u, \neg u \lor x]$, and this holds, as can be shown via calculations, if and only if $u \leq u_{1} \lor u_{2}$. By the primality of $\fr{u}$, we then have that not both of $u_{1}\notin\fr{u}$, $u_{2} \notin \mathfrak{u}$ may hold.
Moreover, notice that $(u_{1} \lor \neg u_{2}) \lor x_{2}$, $(\neg u_{1} \lor u_{2}) \lor x_{1}$ and $(\neg u_{1} \lor \neg u_{2}) \lor (x_{1} \lor x_{2})$ are elements of the radical, but if $a \in \scrR({\bf A})$ with  $u \land x \leq a$, via residuation, we get $x \leq \neg u \lor a$. Hence $\neg u \lor a \in \mathfrak{x}$, and using Lemma \ref{lemmafixes} plus condition \ref{eq:da} we obtain that either $\neg  u \in \mathfrak{u}$ or $a \in \mathfrak{x}$. But since $u \in \mathfrak{u}$, $\neg u \notin \mathfrak{u}$. Thus $a \in \mathfrak{x}$. Applying this reasoning to the three terms above, plus condition \ref{eq:da}, we obtain that the following facts hold: $x_{2} \in \mathfrak{x} \mbox{ or } u_{1} \lor \neg u_{2} \in \mathfrak{u}$; $x_{1} \in \mathfrak{x} \mbox{ or } \neg u_{1} \lor u_{2} \in \mathfrak{u}$; $x_{1} \lor x_{2} \in \mathfrak{x} \mbox{ or } \neg u_{1} \lor \neg u_{2} \in \mathfrak{u}$. It is now easy to check that if $u_{1}, \neg u_{2} \in \mathfrak{u}$ then $a_{1} \in \mathfrak{p}$; if $\neg u_{1}, u_{2} \in \mathfrak{u}$ then $a_{2} \in \mathfrak{p}$; if $u_{1}, u_{2} \in \mathfrak{u}$ then in case $x_{1} \in \mathfrak{x}$ it is $a_{1} \in \mathfrak{p}$, otherwise $x_{2} \in \mathfrak{x}$ and $a_{2} \in \mathfrak{p}$. Thus, $a_{1} \in \mathfrak{p}$ or $a_{2} \in \mathfrak{p}$.
Moreover, $\mathfrak{p}$ is clearly proper if $\mathfrak{u}$ is proper, because $u \land x > 0$ for any $u \in \mathfrak{u}$ and $x \in \mathfrak{x}$, and thus $0 \notin \mathfrak{p}$. As $\mathfrak{p}$ is proper,  $\mathfrak{p} \cap \scrB({\bf A})= \fr{u}$. We have already seen that if $u \land x \leq a$ for $a \in \scrR({\bf A})$ implies $a \in \mathfrak{x}$, thus $\mathfrak{p} \cap \scrR({\bf A})= \fr{x}$. %\begin{equation*}
\end{proof}
%\end{comment}
Following the same lines of proof of Proposition \ref{prop:prime} we can prove the following useful technical lemma.
\begin{lemma}\label{cor:Rustar}
Set $R_{\fr{u}} := \langle \mathfrak{u} \cup \scrR({\bf A})\rangle$, and let $\delta\colon{\bf A}\to{\bf A}$ by defined by $\delta(a)=\neg\neg a$.
\begin{enumerate}
\item $R_{\mathfrak{u}} \in \Xd{\bf A}$ for every $\mathfrak{u} \in \Xd{\scrB({\bf A})}$.
\item If $\fr{x} \in \Xd{\scrR({\bf A})}$, $\delta[\fr{x}] \neq \delta[\scrR({\bf A})]$, and $\fr{u}$ fixes $\fr{x}$, then we have
\begin{equation}\label{eq:genrustar}\langle \mathfrak{u} \cup \mathfrak{x}\rangle^{*} = \{ a \in A \mid u \land \neg x \leq a, \mbox{ for some } u \in \mathfrak{u}, \neg\neg x \in \delta[\scrR({\bf A})] \setminus \delta[\mathfrak{x}]\}\end{equation}
\item Under the hypotheses of (2),
$$\langle \mathfrak{u} \cup \mathfrak{x}\rangle^{*} \cap \scrC({\bf A}) = \{\neg x : \neg\neg x \in \delta[\scrR({\bf A})] \setminus \delta[\mathfrak{x}]\}$$
and $\langle \mathfrak{u} \cup \mathfrak{x}\rangle \subseteq \langle \mathfrak{u} \cup \mathfrak{x}\rangle^{*}$.
\item  If $\fr{x} \in \Xd{\scrR({\bf A})}$ with $\delta[\fr{x}] = \delta[\scrR({\bf A})]$, then $\langle \mathfrak{u} \cup \mathfrak{x}\rangle^{*} = R_{\mathfrak{u}}.$
\end{enumerate}
\end{lemma}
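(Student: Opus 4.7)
The strategy is to extend the primality analysis of Proposition \ref{prop:prime}, adapting it to compute the Routley star $\langle \fr{u} \cup \fr{x}\rangle^*$ (recall that $\fr{p}^* = \{a : \neg a \notin \fr{p}\}$ for any prime $\fr{p}$). Part (1) is the quickest: Lemma \ref{lem:radfixed} ensures every ultrafilter $\fr{u}$ of $\bool{\bf A}$ fixes $\rad{\bf A}$, so via Lemma \ref{lemmafixes} we have $(\fr{u}, \rad{\bf A}) \in \Da{A}$, and Proposition \ref{prop:prime} applied with $\fr{x}$ replaced by $\rad{\bf A}$ (viewed as a generalized prime filter) yields primality of $R_{\fr{u}}$.

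Writing $\fr{p} = \langle \fr{u} \cup \fr{x}\rangle$, for the inclusion $\supseteq$ of (2) I would first record the ancillary fact that for $r \in \rad{\bf A}$ and $b \in \bool{\bf A}$, the inequality $r \leq b$ forces $b = 1$ (otherwise $\neg b > 0$ generates a proper filter extending to a maximal filter that would contain $r$, yielding $r \cdot \neg b = 0$ in a proper filter). A residuation argument using this fact yields both $\fr{u} \subseteq \fr{p}^*$ and the key disjointness $\scrC({\bf A}) \cap \fr{p} = \emptyset$: if $v \land y \leq \neg r$ for $v \in \fr{u}$, $y \in \fr{x}$, $r \in \rad{\bf A}$, then $y \cdot r \in \rad{\bf A}$ satisfies $y \cdot r \leq \neg v \in \bool{\bf A}$, forcing $v = 0$. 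For the coradical generators $\neg x'$ (with $\neg\neg x' \notin \delta[\fr{x}]$), the primality characterization $\fr{p} \cap \rad{\bf A} = \fr{x}$ from Proposition \ref{prop:prime} yields $\neg\neg x' \notin \fr{p}$, so $\neg x' \in \fr{p}^*$.

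For the reverse inclusion in (2), given $a \in \fr{p}^*$, I would invoke (\ref{eq:el}) to decompose $a = (u_0 \land x_0) \lor (\neg u_0 \land \neg x_0)$, yielding $\neg a = (\neg u_0 \land \neg\neg x_0) \lor (u_0 \land \neg x_0)$. Primality of $\fr{p}$ together with $\scrC({\bf A}) \cap \fr{p} = \emptyset$ reduces $\neg a \in \fr{p}$ to the conjunction $\neg u_0 \in \fr{u}$ and $\neg\neg x_0 \in \fr{x}$. If $\neg u_0 \in \fr{u}$ (so $u_0 \notin \fr{u}$), then $a \in \fr{p}^*$ forces $\neg\neg x_0 \notin \fr{x}$, so taking $u = \neg u_0$ and $\neg x' = \neg x_0$ furnishes the witness $a \geq \neg u_0 \land \neg x_0$. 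If instead $u_0 \in \fr{u}$, the hypothesis $\delta[\fr{x}] \neq \delta[\rad{\bf A}]$ provides some $x' \in \rad{\bf A}$ with $\neg\neg x' \notin \delta[\fr{x}]$, and Lemma \ref{lemma:corad}(2) gives $\neg x' < x_0$, whence $u_0 \land \neg x' \leq u_0 \land x_0 \leq a$.

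Parts (3) and (4) then follow by reusing this machinery. Part (3) intersects the characterization from (2) with $\scrC({\bf A})$, applying external primality to the derived $\neg a \leq \neg u \lor \neg\neg x'$ to extract $\neg\neg b \notin \delta[\fr{x}]$ when $a = \neg b$; the inclusion $\fr{p} \subseteq \fr{p}^*$ follows by shrinking any witness $u \land x \leq a$ (with $x \in \fr{x}$) to $u \land \neg x' \leq u \land x$ for a coradical $\neg x' < x$ supplied by the nondegeneracy hypothesis. For (4), the same decomposition analysis under $\delta[\fr{x}] = \delta[\rad{\bf A}]$ places $\neg\neg x_0 \in \fr{x}$ automatically, collapsing the criterion for $a \in \fr{p}^*$ to $u_0 \in \fr{u}$, so $a \geq u_0 \land x_0 \in R_{\fr{u}}$; the reverse $R_{\fr{u}} \subseteq \fr{p}^*$ follows from $\fr{u} \cup \rad{\bf A} \subseteq \fr{p}^*$, again using $\scrC({\bf A}) \cap \fr{p} = \emptyset$. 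The principal obstacle is the case analysis for the forward inclusion in (2): in the case $u_0 \in \fr{u}$ one must simultaneously invoke the hypothesis on $\delta$ to produce a fresh coradical witness and the strict separation of coradical from radical to ensure it dominates the relevant radical part of $a$.
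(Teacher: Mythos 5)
Your proposal is correct and follows essentially the same route as the paper's proof: part (1) via Proposition \ref{prop:prime} applied to the pair $(\fr{u},\rad{\bf A})$, and parts (2)--(4) via the decomposition (\ref{eq:el}), the facts $\fr{p}\cap\bool{\bf A}=\fr{u}$ and $\fr{p}\cap\rad{\bf A}=\fr{x}$, the disjointness of $\fr{p}$ from the coradical, and the same case split on whether the Boolean component lies in $\fr{u}$ (producing a fresh witness $x'$ with $\neg\neg x'\notin\delta[\fr{x}]$ from the nondegeneracy hypothesis in the case $u_0\in\fr{u}$). Your up-front packaging of the membership criterion for $\fr{p}^*$ as a biconditional is a mild organizational improvement but not a different argument.
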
 
\begin{proof}
Since the radical is fixed by every ultrafilter of the Boolean skeleton, proof of (1) follows from Proposition \ref{prop:prime}. In order to prove (2), we check identity \ref{eq:genrustar}.
To prove the right-inclusion, let $u \land \neg x \leq a$ with $ u \in \mathfrak{u}, \neg\neg x \in  \delta[\scrR({\bf A})] \setminus \delta[\mathfrak{x}]$. Then $\neg a \leq \neg u \lor \neg\neg x$. It follows that $\neg a \notin \langle \mathfrak{u} \cup \mathfrak{x}\rangle$; otherwise we would have $\neg u \lor \neg\neg x \in \langle \mathfrak{u} \cup \mathfrak{x}\rangle$, but since $u \in \fr{u},\neg x \notin \fr{u}, \neg\neg x \notin \delta[\mathfrak{x}]\subseteq \fr{x}$ this would lead to contradiction. Hence $a \in \langle \mathfrak{u} \cup \mathfrak{x}\rangle^{*}$.

To prove the reverse inclusion, we again use the decomposition of the elements in terms of the Boolean skeleton and radical. Let $a \in \langle \mathfrak{u} \cup \mathfrak{x}\rangle^{*}$. Then $\neg a \notin \langle \mathfrak{u} \cup \mathfrak{x}\rangle$, where we write $a = (u \land x) \lor (\neg u \land \neg x)$ and $\neg a = (\neg u \land \neg\neg x) \lor (u \land \neg x)$ in accordance with the representation given in Section \ref{sec:quadruples}. If $u \in \fr{u}$, then we have that $a \geq  u \land x \geq u \land \neg y$ for any $y \in \scrR({\bf A})$ \cite{AguzFlamUgol}, and since $\delta[\fr{x}] \neq \delta[\scrR({\bf A})]$ there exists $z\in \scrR({\bf A})$ such that $\neg\neg z \notin\fr{x}$ and $a \geq  u \land \neg z$. Otherwise, if $u \notin \fr{u}$, $\neg u \in \fr{u}$ and since $\neg u \land \neg\neg x \leq \neg a \notin \langle \mathfrak{u} \cup \mathfrak{x}\rangle$, we get that $\neg\neg x \notin \fr{x}$. Thus $a \geq \neg u \land \neg x$ with $\neg\neg x \in\delta[\scrR({\bf A})] \setminus \delta[\mathfrak{x}]$. This proves identity \ref{eq:genrustar} and thus (2).

In order to prove $\langle \mathfrak{u} \cup \mathfrak{x}\rangle \subseteq \langle \mathfrak{u} \cup \mathfrak{x}\rangle^{*}$, let $a \in \langle \mathfrak{u} \cup \mathfrak{x}\rangle$. Then $u \land x \leq a$ for some $u \in \fr{u}, x \in \fr{x}$. Reasoning as before, there exists $\neg\neg x \in \delta[\scrR({\bf A})] \setminus \delta[\mathfrak{x}]$ and $u \land \neg x \leq u \land x \leq a$, thus $a \in  \langle \mathfrak{u} \cup \mathfrak{x}\rangle^{*}$.
Finally, $\langle \mathfrak{u} \cup \mathfrak{x}\rangle^{*} \cap \scrC({\bf A}) = \{\neg x : \neg\neg x \in \delta[\scrR({\bf A})] \setminus \delta[\mathfrak{x}]\}$ follows from the definition of the operator $^{*}$. This proves (3).

We now prove (4). The inclusion $\langle \mathfrak{u} \cup \mathfrak{x}\rangle^{*} \subseteq R_{\mathfrak{u}}$ can be proved again via calculations using the decomposition of the elements. For the other inclusion, let $a \in R_{\mathfrak{u}}$, with $u \land x \leq a$ for some $u \in \fr{u}, x \in \scrR({\bf A})$. Then $\neg a \leq \neg u \lor \neg x$. If $\neg a \in \langle \mathfrak{u} \cup \mathfrak{x}\rangle$, then by primality one of $\neg u$ or $\neg x$ is in $\langle \mathfrak{u} \cup \mathfrak{x}\rangle$. But $\neg u \notin \fr{u}$ and $\neg x \in \scrC({\bf A})$ (see Lemma \ref{lemma:corad}), and thus they are not in $\langle \mathfrak{u} \cup \mathfrak{x}\rangle$. Hence $\neg a \notin \langle \mathfrak{u} \cup \mathfrak{x}\rangle$, which means $a \in \langle \mathfrak{u} \cup \mathfrak{x}\rangle^{*}$. The claim follows.
\end{proof}
%We will show that $\Da{A}^{\bowtie}$ is order isomorphic to $\Xd{\bf A}$. %For this purpose, recall that for each $\fr{a}\in\Xd{A}$, we set
%$\fr{a}^* = \{a\in A : \neg a\notin \fr{a}\}.$
%This is the \emph{Routley star} of \cite{RM1,RM2,RM3}, which is used to model the negation on dual spaces in \cite{Urquhart}. It is easy to see that for any $\fr{a}\in\Xd{A}$, $\fr{a}^*\in\Xd{A}$ too. Moreover, $^*$ is inclusion-reversing.

\begin{lemma}\label{lem:routleystar}
Let ${\bf A}$ be an $\sbp$-algebra and let $\fr{a}\in\Xd{\bf A}$. Then either $\fr{a}\subseteq\fr{a}^*$ or $\fr{a}^*\subseteq\fr{a}$.
\end{lemma}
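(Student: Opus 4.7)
The plan is a case split based on whether the radical $\rad{\bf A}$ is contained in $\fr{a}$. The key algebraic ingredients are (i) the canonical decomposition $a = (u \wedge x) \vee (\neg u \wedge \neg x)$ of each element of ${\bf A}$ with $u \in \bool{\bf A}$ and $x \in \rad{\bf A}$ coming from \eqref{eq:el}, and (ii) the fact, established in Lemma \ref{lemma:corad}(2), that every element of $\scrC({\bf A})$ lies strictly below every element of $\rad{\bf A}$. First I would expand $\neg a$ from this decomposition via the Boolean identities of Lemma \ref{lem:Boolean elements}, obtaining $\neg a = (\neg u \vee \neg x) \wedge (u \vee \neg\neg x)$. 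Since $\neg x \in \scrC({\bf A})$ and $\neg\neg x \in \rad{\bf A}$, (ii) yields $\neg x \leq \neg\neg x$; together with distributivity this simplifies the formulas to $\neg a = (\neg u \wedge \neg\neg x) \vee \neg x$ and $a \wedge \neg a = \neg x$.

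Suppose first that $\rad{\bf A} \not\subseteq \fr{a}$; the goal is $\fr{a} \subseteq \fr{a}^*$. Given $a \in \fr{a}$, if we also had $\neg a \in \fr{a}$, the simplified formula would give $\neg x = a \wedge \neg a \in \fr{a}$. But $\neg x \in \scrC({\bf A})$ is below every element of $\rad{\bf A}$, so upward-closure of $\fr{a}$ forces $\rad{\bf A} \subseteq \fr{a}$, contradicting the case assumption. Hence $\neg a \notin \fr{a}$, i.e., $a \in \fr{a}^*$.

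In the second case, $\rad{\bf A} \subseteq \fr{a}$, the goal is $\fr{a}^* \subseteq \fr{a}$. For $b \in \fr{a}^*$ with decomposition $b = (u \wedge x) \vee (\neg u \wedge \neg x)$, I would analyze the two possibilities for $u$ relative to $\fr{u}_\fr{a}$. If $u \in \fr{u}_\fr{a}$, then $u \in \fr{a}$ and $x \in \rad{\bf A} \subseteq \fr{a}$, so $u \wedge x \in \fr{a}$ and upward-closure gives $b \in \fr{a}$. If instead $u \notin \fr{u}_\fr{a}$, then $\neg u \in \fr{u}_\fr{a} \subseteq \fr{a}$ and $\neg\neg x \in \rad{\bf A} \subseteq \fr{a}$, hence $\neg u \wedge \neg\neg x \in \fr{a}$; but $\neg b \geq \neg u \wedge \neg\neg x$ by the simplified formula, so $\neg b \in \fr{a}$, contradicting $b \in \fr{a}^*$. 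Thus only the first sub-case occurs and $b \in \fr{a}$.

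The hard part will be the preliminary calculation, particularly verifying the simplification of $\neg a$ and $a \wedge \neg a$ by locating $\neg x$ within $\scrC({\bf A})$ and $\neg\neg x$ within $\rad{\bf A}$ so that Lemma \ref{lemma:corad}(2) can be applied. Once the collapse $a \wedge \neg a = \neg x$ is in hand, both cases dispatch via routine filter-theoretic reasoning using upward-closure and the structure of $\fr{u}_\fr{a}$.
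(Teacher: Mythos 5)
Your proof is correct, but it takes a genuinely different route from the paper's. The paper's argument is a three-line computation valid in any $\mtl$-algebra: if $\fr{a}\not\subseteq\fr{a}^*$ then some $a$ has $a,\neg a\in\fr{a}$, so $a\meet\neg a\in\fr{a}$; the Kleene inequality $a\meet\neg a\leq b\join\neg b$ (item (13) of the preliminary proposition) then puts $b\join\neg b$ in $\fr{a}$ for every $b\in\fr{a}^*$, and primality forces $b\in\fr{a}$ (since $\neg b\in\fr{a}$ would contradict $b\in\fr{a}^*$). Your argument instead leans on the $\sbp$-specific machinery: the decomposition $a=(u\join\neg x)\meet(\neg u\join x)$ of \eqref{eq:el}, the computations $\neg a=(u\meet\neg x)\join(\neg u\meet\neg\neg x)$ and $a\meet\neg a=\neg x$ (both of which check out, and both of which the paper itself uses later in Lemmas \ref{lem:doubleneg} and \ref{lem:radical is star fixed}), and the coradical-below-radical ordering of Lemma \ref{lemma:corad}(2), organized as a case split on whether $\rad{\bf A}\subseteq\fr{a}$. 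What the paper's route buys is brevity and generality — it needs nothing beyond the Kleene identity, so it applies to any prime filter of any $\mtl$-algebra satisfying $a\meet\neg a\leq b\join\neg b$. What your route buys is structural information: your two cases establish, en passant, that $\rad{\bf A}\not\subseteq\fr{a}$ implies $\fr{a}\subseteq\fr{a}^*$ and that $\rad{\bf A}\subseteq\fr{a}$ implies $\fr{a}^*\subseteq\fr{a}$, which is essentially the content of Lemma \ref{lem:radicalstar} and a piece of Lemma \ref{lemma:incl}; so your single argument subsumes several of the paper's subsequent lemmas at the cost of invoking the quadruple decomposition up front.
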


\begin{proof}
Suppose that $\fr{a}\not\subseteq\fr{a}^*$. Then there exists $a\in\fr{a}$ with $a\notin\fr{a}^*$. This gives $\neg a\in\fr{a}$ by the definition of $^*$, so $a,\neg a\in\fr{a}$ and thus $a\meet\neg a\in\fr{a}$. In any $\sbp$-algebra, we have $a\meet\neg a\leq b\join\neg b$ for any elements $a$ and $b$. Taking $b\in\fr{a}^*$, this gives that $b\join\neg b\in\fr{a}$ since filters are upward closed. Since $\fr{a}$ is prime, this yields $b\in\fr{a}$ or $\neg b\in\fr{a}$. In the latter case, we would have $b\notin\fr{a}^*$, contradicting the fact that $b$ was chosen from $\fr{a}^*$. It follows that $b\in\fr{a}$, so $\fr{a}^*\subseteq\fr{a}$. This proves the claim.
\end{proof}

\begin{comment}
Notice that in general it does not hold that $\fr{x}^{**}= \fr{x}$, but we can prove the following.
\begin{lemma}\label{lemma:rustarprop}
Let ${\bf A}$ be an $\sbp$-algebra and let $\fr{a}\in\Xd{\bf A}$. Then $\fr{a}\subseteq\fr{a}^*$ implies $\fr{a}^{**}\subseteq\fr{a}^{*}$.
\end{lemma}
\begin{proof}
Let $x \in \fr{a}^{**}$. Then by definition $\neg x \notin \fr{a}^*$, which implies $\neg x \notin \fr{a}$, otherwise since $\fr{a}\subseteq\fr{a}^*$ we would have $\neg x \in \fr{a}^*$. But $\neg x \notin \fr{a}$ means $x \in \fr{a}^*$, thus $\fr{a}^{**}\subseteq\fr{a}^{*}$.
\end{proof}
\end{comment}
Observe that if $\fr{a},\fr{b}\in\Xd{A}$ and $\fr{a}\subseteq\fr{b}$, then $\fr{a}$ and $\fr{b}$ must contain the same ultrafilter. The collection of prime filters having a given ultrafilter is a natural notion in this setting, and we arrive at the following definition.
\begin{definition}For an ultrafilter $\fr{u}$ of the Boolean skeleton of an $\sbp$-algebra, we define $\site{u} = \{\fr{a}\in\Xd{A} : \fr{u}\subseteq\fr{a}\}$ and call $\site{u}$ the \emph{site of $\fr{u}$ in ${\bf A}$}. Whenever it is convenient, we may regard $\site{u}$ as a partially-ordered set under inclusion.
\end{definition}
%
%Let ${\bf A}$ be an $\sbp$-algebra, and suppose that $\fr{a}\in\Xd{A}$. Then each $u\in\bool{\bf A}$ satisfies $u\join\neg u = 1\in\fr{a}$, and by primality either $u\in\fr{a}$ or $\neg u\in\fr{a}$. This entails that each $\fr{a}\in\Xd{A}$ contains an ultrafilter of $\bool{\bf A}$, and since $\fr{a}$ is a proper filter this must be the only ultrafilter of $\bool{\bf A}$ contained in $\fr{a}$. For each $\fr{a}\in\Xd{A}$ we hence denote by $\fr{u}_\fr{a}$ the unique ultrafilter of $\bool{\bf A}$ contained in $\fr{a}$, and call it the \emph{ultrafilter of $\fr{a}$}. Observe that if $\fr{a},\fr{b}\in\Xd{\bf A}$ and $\fr{a}\subseteq\fr{b}$, then $\fr{a}$ and $\fr{b}$ must contain the same ultrafilter. Combining this fact with
Using Lemma \ref{lem:routleystar}, we obtain the following.

\begin{lemma}\label{lem:sameultrafiltersite}
Let $\fr{a}\in\Xd{A}$. Then $\fr{a}$ and $\fr{a}^*$ have the same ultrafilter, and hence $\site{u}$ is closed under $^*$ for each ultrafilter $\fr{u}$ of $\bool{\bf A}$.
\end{lemma}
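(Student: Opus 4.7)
The plan is to reduce the claim to showing that $\fr{a}$ and $\fr{a}^*$ contain exactly the same Boolean elements, since each prime filter of an $\sbp$-algebra contains a unique ultrafilter of $\bool{\bf A}$. The key leverage is that Boolean elements are involutive: by Lemma \ref{lem:Boolean elements}(1) we have $\neg\neg u = u$ for every $u \in \bool{\bf A}$, and $u \join \neg u = 1$ separates each ultrafilter from its complement.

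First I would fix an arbitrary $u \in \bool{\bf A}$ and unpack the defining condition of $^*$: $u \in \fr{a}^*$ iff $\neg u \notin \fr{a}$. Because $u \join \neg u = 1 \in \fr{a}$ and $\fr{a}$ is prime, at least one of $u,\neg u$ lies in $\fr{a}$; and they cannot both lie in $\fr{a}$, since by Lemma \ref{lem:Boolean elements} we have $u \meet \neg u = u \cdot \neg u = 0$, contradicting properness of $\fr{a}$. Hence $u \in \fr{a}$ iff $\neg u \notin \fr{a}$ iff $u \in \fr{a}^*$. Intersecting with $\bool{\bf A}$, this gives $\fr{a} \cap \bool{\bf A} = \fr{a}^* \cap \bool{\bf A}$. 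Since each prime filter of ${\bf A}$ contains a \emph{unique} ultrafilter of $\bool{\bf A}$ (as established in the discussion preceding Lemma \ref{lem:fixedpoints}), this set coincides with both $\fr{u}_\fr{a}$ and $\fr{u}_{\fr{a}^*}$, and thus $\fr{u}_\fr{a} = \fr{u}_{\fr{a}^*}$, as desired. Note that along the way one should observe that $\fr{a}^*$ is itself a (proper) prime filter—this is the fact recorded in the excerpt immediately after the definition of $^*$—so that $\fr{u}_{\fr{a}^*}$ is meaningful.

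The second claim is then immediate: if $\fr{a} \in \site{u}$, then $\fr{u} \subseteq \fr{a}$, which forces $\fr{u} = \fr{u}_\fr{a}$ by uniqueness. By the first part, $\fr{u} = \fr{u}_{\fr{a}^*} \subseteq \fr{a}^*$, so $\fr{a}^* \in \site{u}$ too. There is no genuine obstacle here; the proof is a short computation once one invokes the Boolean-skeleton involutivity and the uniqueness of the contained ultrafilter.
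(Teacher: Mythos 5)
Your proof is correct, but it takes a genuinely different route from the paper's. The paper first invokes Lemma \ref{lem:routleystar} (which rests on the $\sbp$-identity $a\meet\neg a\leq b\join\neg b$) to get that $\fr{a}$ and $\fr{a}^*$ are comparable, and then observes that the ultrafilter of the smaller of the two is contained in both, so uniqueness forces agreement. You instead compute directly that $\fr{a}$ and $\fr{a}^*$ have the same Boolean trace: for $u\in\bool{\bf A}$, exactly one of $u,\neg u$ lies in $\fr{a}$ (using $u\join\neg u=1$, primality, $u\cdot\neg u=u\meet\neg u=0$, and properness), so $u\in\fr{a}$ iff $\neg u\notin\fr{a}$ iff $u\in\fr{a}^*$, whence $\fr{a}\cap\bool{\bf A}=\fr{a}^*\cap\bool{\bf A}=\fr{u}_\fr{a}=\fr{u}_{\fr{a}^*}$. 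Your version is more elementary and more general — it uses nothing beyond the characterization of Boolean elements and the uniqueness of the contained ultrafilter, so it goes through in any $\mtl$-algebra, not just $\sbp$-algebras — whereas the paper's argument reuses machinery ($\fr{a}\subseteq\fr{a}^*$ or $\fr{a}^*\subseteq\fr{a}$) that it needs elsewhere anyway and so comes essentially for free in context. Your remark that $\fr{a}^*$ is itself a proper prime filter (so that $\fr{u}_{\fr{a}^*}$ is well defined) is an appropriate point to flag; both closure properties are recorded in the paper right after the definition of $^*$.
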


\begin{proof}
By Lemma \ref{lem:routleystar}, either $\fr{a}\subseteq\fr{a}^*$ or $\fr{a}^*\subseteq\fr{a}$. Let $\fr{u}$ be the ultrafilter of the least of $\fr{a},\fr{a}^*$. Then $\fr{u}\subseteq\fr{a}$ and $\fr{u}\subseteq\fr{a}^*$. Since the ultrafilter contained in a prime filter of ${\bf A}$ is unique, this implies that $\fr{u}$ is the ultrafilter of each of $\fr{a}$ and $\fr{a}^*$.
\end{proof}

\begin{lemma}\label{lem:radicalstar}
Let ${\bf A}$ be an $\sbp$-algebra and let $\fr{a}\in\Xd{A}$. Then one of $\fr{a}$ or $\fr{a}^*$ contains $\rad{\bf A}$.
\end{lemma}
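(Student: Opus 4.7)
My plan is to argue by contrapositive: assume $\rad{\bf A}\not\subseteq\fr{a}$ and deduce $\rad{\bf A}\subseteq\fr{a}^*$. The assumption furnishes a witness $x\in\rad{\bf A}$ with $x\notin\fr{a}$, which I will use as a ``ceiling'' to push coradical elements out of $\fr{a}$.

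Fix an arbitrary $y\in\rad{\bf A}$; the goal is to show $\neg y\notin\fr{a}$, which by the definition $\fr{a}^* = \{a \in A : \neg a\notin\fr{a}\}$ yields $y\in\fr{a}^*$. By Lemma \ref{lemma:corad}(1), $\neg y$ belongs to the coradical $\scrC({\bf A})$. Invoking Lemma \ref{lemma:corad}(2), which states that every coradical element lies strictly below every radical element, we obtain $\neg y<x$. Since $\fr{a}$ is an upward-closed filter, $\neg y\in\fr{a}$ would force $x\in\fr{a}$, contradicting the choice of $x$. Therefore $\neg y\notin\fr{a}$ as desired, and since $y\in\rad{\bf A}$ was arbitrary we conclude $\rad{\bf A}\subseteq\fr{a}^*$.

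The entire content of the argument is packaged inside Lemma \ref{lemma:corad}, so there is no real obstacle beyond invoking parts (1) and (2) of that lemma and using the upward-closure of the prime filter $\fr{a}$. In particular, the argument does not require $\fr{a}^*$ to itself be a filter (though it is), since the conclusion only asserts set containment.
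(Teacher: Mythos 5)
Your proof is correct, but it routes around the paper's argument in a small yet genuine way. The paper first shows $\rad{\bf A}\setminus\fr{a}\subseteq\fr{a}^*$ using only the internal characterization of radical elements ($\neg a<a$ for $a\in\rad{\bf A}$, so $\neg a\in\fr{a}$ would force $a\in\fr{a}$), and then must still account for $\rad{\bf A}\cap\fr{a}$; it does so by invoking Lemma \ref{lem:routleystar} (comparability of $\fr{a}$ and $\fr{a}^*$) to upgrade the existence of an element of $\fr{a}^*\setminus\fr{a}$ to the inclusion $\fr{a}\subseteq\fr{a}^*$. You instead fix a single witness $x\in\rad{\bf A}\setminus\fr{a}$ and use Lemma \ref{lemma:corad}(1)--(2) — that $\neg y$ is coradical and every coradical element lies below every radical element — to conclude $\neg y\le x$ and hence $\neg y\notin\fr{a}$ for \emph{every} $y\in\rad{\bf A}$ at once. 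This handles all of $\rad{\bf A}$ uniformly and dispenses with Lemma \ref{lem:routleystar} entirely, at the cost of leaning on the global order fact $\scrC({\bf A})<\rad{\bf A}$ rather than the pointwise fact $\neg a<a$. Both arguments are sound and of comparable length; yours is marginally more self-contained relative to this section, while the paper's reuses machinery ($^*$-comparability) that it needs elsewhere anyway. Your closing observation that no filter structure on $\fr{a}^*$ is needed is also accurate.
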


\begin{proof}
Let $a\in\rad{\bf A}\setminus\fr{a}$. Because $\neg a < a$ for each $a\in\rad{\bf A}$, this yields that $\neg a\notin\fr{a}$ (for if $\neg a\in\fr{a}$, then we would have $a\in\fr{a}$ because $\fr{a}$ is upward-closed). It follows that $a\in\fr{a}^*$, and this shows $\rad{\bf A}\setminus\fr{a}\subseteq\fr{a}^*$.

Now suppose that $\rad{\bf A}\not\subseteq\fr{a}$. Then $\rad{\bf A}\setminus\fr{a}\neq\emptyset$, so the comments above give that there exists $a\in\fr{a}^*$ with $a\notin\fr{a}$. Because Lemma \ref{lem:routleystar} provides that $\fr{a}\subseteq\fr{a}^*$ or $\fr{a}^*\subseteq\fr{a}$, it follows that $\fr{a}\subseteq\fr{a}^*$. Hence $\rad{\bf A}\setminus\fr{a}\subseteq\fr{a}^*$ and $\rad{\bf A}\cap\fr{a}\subseteq\fr{a}\subseteq\fr{a}^*$, so $\rad{\bf A}=(\rad{\bf A}\cap\fr{a})\cup (\rad{\bf A}\setminus\fr{a})\subseteq\fr{a}^*$, giving the result.
\end{proof}
\begin{lemma}\label{lemma:incl}
For every $\fr{a} \in \Xd{A}$, either $\fr{a} \subseteq \scrR_{\fr{u}_{\fr{a}}} \subseteq \fr{a}^{*}$ or $\fr{a}^{*} \subseteq \scrR_{\fr{u}_{\fr{a}}} \subseteq \fr{a}$.
\end{lemma}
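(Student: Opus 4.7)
The plan is to reduce the statement to a key one-sided inclusion, which I will then prove by a contradiction argument exploiting the fact that the prime filter $R_{\fr{u}_\fr{a}} := \langle \fr{u}_\fr{a}\cup \rad{\bf A}\rangle$ is a fixed point of the Routley star. Two preliminary facts are essential: Lemma \ref{cor:Rustar}(1) ensures that $R_\fr{u}$ is a prime filter of ${\bf A}$ for every ultrafilter $\fr{u}$ of $\bool{\bf A}$, and Lemma \ref{cor:Rustar}(4) applied with $\fr{x}=\rad{\bf A}$ (for which $\delta[\fr{x}]=\delta[\rad{\bf A}]$ is trivial) yields $R_\fr{u}^{*}=R_\fr{u}$. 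Equivalently, for any $c\in A$ one has $c\in R_\fr{u}$ iff $\neg c\notin R_\fr{u}$.

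The core of the proof is the following intermediate claim: \emph{if $\fr{b}\in\Xd{A}$ satisfies $\fr{b}\subseteq\fr{b}^{*}$, then $\fr{b}\subseteq R_{\fr{u}_\fr{b}}$.} To prove it, suppose for contradiction that $a\in\fr{b}\setminus R_{\fr{u}_\fr{b}}$. By the self-conjugacy above, $\neg a\in R_{\fr{u}_\fr{b}}$, so there exist $u\in \fr{u}_\fr{b}$ and $x\in\rad{\bf A}$ with $u\wedge x\leq \neg a$. Since $u$ is Boolean, Lemma \ref{lem:Boolean elements} gives $u\wedge x=u\cdot x$, and residuation then yields $a\cdot u\leq \neg x$, i.e.\ $a\wedge u\leq \neg x$. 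Because $a\in\fr{b}$ and $u\in \fr{u}_\fr{b}\subseteq \fr{b}$, we obtain $\neg x\in \fr{b}$ by closure under meets and upward closure. On the other hand, Lemma \ref{lem:radicalstar} combined with the hypothesis $\fr{b}\subseteq \fr{b}^{*}$ implies $\rad{\bf A}\subseteq \fr{b}^{*}$, so $\neg x\notin \fr{b}$, a contradiction.

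With the claim in hand, the main statement splits along the dichotomy of Lemma \ref{lem:routleystar}. In Case A ($\fr{a}\subseteq \fr{a}^{*}$), the claim gives $\fr{a}\subseteq R_{\fr{u}_\fr{a}}$, while $\fr{u}_\fr{a}\subseteq \fr{a}\subseteq \fr{a}^{*}$ together with $\rad{\bf A}\subseteq \fr{a}^{*}$ yields $R_{\fr{u}_\fr{a}}\subseteq \fr{a}^{*}$. In Case B ($\fr{a}^{*}\subseteq \fr{a}$), Lemma \ref{lem:radicalstar} gives $\rad{\bf A}\subseteq \fr{a}$, so $R_{\fr{u}_\fr{a}}\subseteq \fr{a}$ follows from $\fr{u}_\fr{a}\cup\rad{\bf A}\subseteq \fr{a}$. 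For the remaining inclusion $\fr{a}^{*}\subseteq R_{\fr{u}_\fr{a}}$, I would apply the intermediate claim to $\fr{b}=\fr{a}^{*}$: since $\fr{a}\subseteq \fr{a}^{**}$ always holds, Case B forces $\fr{a}^{*}\subseteq \fr{a}^{**}=(\fr{a}^{*})^{*}$, and Lemma \ref{lem:sameultrafiltersite} identifies $\fr{u}_{\fr{a}^{*}}$ with $\fr{u}_\fr{a}$, so $R_{\fr{u}_{\fr{a}^{*}}}=R_{\fr{u}_\fr{a}}$.

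The delicate step is the contradiction inside the intermediate claim, where the inequality $u\wedge x\leq \neg a$ must be funneled through Boolean conjunction and residuation in just the right way to produce an element of $\rad{\bf A}$ whose negation lies in $\fr{b}$---precisely the configuration forbidden by $\rad{\bf A}\subseteq \fr{b}^{*}$. Everything else is bookkeeping that reassembles the two inclusions from the self-dual role played by $R_{\fr{u}_\fr{a}}$ under $^{*}$.
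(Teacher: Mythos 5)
Your proof is correct, and it reaches the conclusion by a genuinely different route at the key step. The paper proves the hard inclusion $\fr{a}^{*}\subseteq R_{\fr{u}_{\fr{a}}}$ (in the case $\fr{a}^{*}\subseteq\fr{a}$) by writing each $a\in\fr{a}^{*}$ in the normal form $a=(u\meet x)\join(\neg u\meet\neg x)$ of Equation \ref{eq:el}, using primality of $\fr{a}^{*}$ to decide which disjunct it contains, and splitting into further subcases according to whether $R_{\fr{u}_{\fr{a}}}\subseteq\fr{a}^{*}$ (in which subcase it shows $R_{\fr{u}_{\fr{a}}}=\fr{a}=\fr{a}^{*}$). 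You instead isolate the single claim that $\fr{b}\subseteq\fr{b}^{*}$ forces $\fr{b}\subseteq R_{\fr{u}_{\fr{b}}}$, prove it by a residuation computation played against the $*$-fixedness of $R_{\fr{u}}$ --- which you correctly extract from Lemma \ref{cor:Rustar}(4) with $\fr{x}=\rad{\bf A}$, so no forward appeal to Lemma \ref{lem:radical is star fixed} is needed --- and then apply the claim once to $\fr{a}$ and once to $\fr{a}^{*}$, the latter application being legitimate because $\fr{a}^{*}\subseteq\fr{a}\subseteq\fr{a}^{**}=(\fr{a}^{*})^{*}$ and Lemma \ref{lem:sameultrafiltersite} identifies $\fr{u}_{\fr{a}^{*}}$ with $\fr{u}_{\fr{a}}$. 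Your version avoids both the element decomposition and the secondary case split, at the cost of relying on the somewhat heavier Lemma \ref{cor:Rustar}; the paper's version stays closer to the quadruple machinery it is building toward. Both arguments rest on the same two pillars, Lemmas \ref{lem:routleystar} and \ref{lem:radicalstar}, and the easy inclusions ($R_{\fr{u}_{\fr{a}}}\subseteq\fr{a}$ resp.\ $R_{\fr{u}_{\fr{a}}}\subseteq\fr{a}^{*}$) are handled identically.
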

\begin{proof}
From Lemma \ref{lem:routleystar}, either $\fr{a} \subseteq  \fr{a}^{*}$ or $\fr{a}^{*} \subseteq \fr{a}$. We consider the case $\fr{a}^{*} \subseteq \fr{a}$. By Lemma \ref{lem:radicalstar} we obtain that $\rad{\bf A} \subseteq \fr{a}$ and this implies that $R_{\fr{u}} = \langle \rad{\bf A} \cup \fr{u} \rangle \subseteq \fr{a}$. 

First consider the case in which $R_{\fr{u}} \not\subseteq \fr{a}^{*}$. We need to show that $\fr{a}^{*} \subseteq R_{\fr{u}}$, so let $a \in \fr{a}^{*}$. Then from identity \ref{eq:el}, $a = (u \land x) \lor (\neg u \land \neg x)$ for some $u\in\bool{\bf A}$, $x\in\rad{\bf A}$. Since $\fr{a}^{*}$ is prime, either $u \land x$ or $\neg u \land \neg x$ is in $\fr{a}^{*}$. If $u \notin \fr{u}$, we would have $\neg u \land \neg x \in \fr{a}^{*}$, so $\neg x \in \fr{a}^{*}$. But since $\fr{a}^{*}$ upwards closed and $\neg x \leq y$ for every $y \in \rad{\bf A}$ gives that $\rad{\bf A}\subseteq\fr{a}^*$. Thus $R_{\fr{u}} \subseteq \fr{a}^{*}$, a contradiction. Hence $u \in \fr{u}$ and $u \land x \in \fr{a}^{*}$. Since $u \land x \leq a$, and $u \land x \in R_{\fr{u}}$, we get $\fr{a}^{*}\subseteq R_{\fr{u}}$. 

Now consider the case where $R_{\fr{u}} \subseteq \fr{a}^{*}$. Let $x \in \scrC({\bf A})$. Then from Lemma \ref{lemma:corad}, $\neg x \in \rad{\bf A}\subseteq\scrR_{\fr{u}} \subseteq \fr{a}^{*} \subseteq \fr{a}$, which implies $x \notin \fr{a}^{*}$ and $\neg\neg x \notin \fr{a}$ from the definition of $*$ and the fact that $x\leq\neg\neg x$. Thus also $x \notin \fr{a}$. Let $a \in \fr{a}$. Again from identity \ref{eq:el} we get $a = (u \land y) \lor (\neg u\land \neg y)$ (where as always $u\in\bool{\bf A}$, $y\in\rad{\bf A}$), and then either $u \land y$ or $\neg u \land \neg y$ is in $\fr{a}$. But since $\neg y \in \scrC({\bf A})$, we have $\neg y \notin \fr{a}$ from the above. This means that $u \land y \in \fr{a}$, and thus in particular $u \in \fr{u}$ and $u\land y \in \scrR_{\fr{u}}$. Because $u\land y \leq a$, this gives $\scrR_{\fr{u}_{\fr{a}}} = \fr{a} = \fr{a}^{*}$.

We can prove in a completely analogous way that if $\fr{a} \subseteq \fr{a}^{*}$, then $\fr{a} \subseteq R_{\fr{u}_{\fr{a}}} \subseteq \fr{a}^{*}$.
\end{proof}
\begin{lemma}\label{lem:radical is star fixed}
$R_{\fr{u}} = R_{\fr{u}}^{*}$, for every $u \in \mathcal{S}(\scrB({\bf A}))$.
\end{lemma}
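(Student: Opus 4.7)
The plan is to obtain the result as an immediate specialization of Lemma \ref{cor:Rustar}(4). That lemma concludes $\langle \fr{u} \cup \fr{x}\rangle^{*} = R_{\fr{u}}$ for any $\fr{x} \in \Xd{\scrR({\bf A})}$ satisfying $\delta[\fr{x}] = \delta[\scrR({\bf A})]$, so the natural maneuver is to choose $\fr{x} = \scrR({\bf A})$ itself, whose generated filter $\langle \fr{u} \cup \scrR({\bf A})\rangle$ coincides with $R_{\fr{u}}$ by definition.

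Concretely, I would first observe that under the top-bounded variant of Priestley duality developed in Section \ref{sec:duality}, the set $\scrR({\bf A})$ is a generalized prime filter of itself, hence a legitimate element of $\Xd{\scrR({\bf A})}$. The condition $\delta[\fr{x}] = \delta[\scrR({\bf A})]$ is then trivially verified. Next I would invoke Lemma \ref{lem:radfixed} to note that every ultrafilter $\fr{u}$ of $\scrB({\bf A})$ fixes the whole radical $\scrR({\bf A})$; combined with Proposition \ref{prop:prime}, this guarantees that $R_{\fr{u}} = \langle \fr{u} \cup \scrR({\bf A})\rangle$ is a genuine prime filter of ${\bf A}$, so that the Routley star $R_{\fr{u}}^{*}$ is well-defined and the hypotheses tacitly required for applying Lemma \ref{cor:Rustar}(4) are in force. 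At this point the lemma delivers
\[
R_{\fr{u}}^{*} \;=\; \langle \fr{u} \cup \scrR({\bf A})\rangle^{*} \;=\; R_{\fr{u}},
\]
as desired.

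Insofar as there is any obstacle, it is purely a matter of bookkeeping: one must recognize that parts (2)--(3) of Lemma \ref{cor:Rustar} handle the case $\delta[\fr{x}] \neq \delta[\scrR({\bf A})]$ separately, precisely because part (4) is the complementary statement tailored to the degenerate choice $\fr{x} = \scrR({\bf A})$. Once this is noticed, the present claim is a one-line corollary requiring no further computation.
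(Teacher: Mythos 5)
Your proposal is correct, but it takes a genuinely different route from the paper. The paper proves the lemma from scratch: it establishes $R_{\fr{u}} \subseteq R_{\fr{u}}^{*}$ and $R_{\fr{u}}^{*} \subseteq R_{\fr{u}}$ by direct computation, writing each element as $a=(u\meet x)\join(\neg u\meet\neg x)$ with $u\in\bool{\bf A}$, $x\in\rad{\bf A}$, and checking case by case that $\neg a\notin R_{\fr{u}}$ (respectively, that $a\in R_{\fr{u}}$). You instead specialize Lemma \ref{cor:Rustar}(4) to $\fr{x}=\rad{\bf A}$, which is indeed a legitimate element of $\Xd{\rad{\bf A}}$ under the generalized-prime-filter convention of Section \ref{sec:duality}, trivially satisfies $\delta[\fr{x}]=\delta[\rad{\bf A}]$, and is fixed by every ultrafilter by Lemma \ref{lem:radfixed}, so that $\langle\fr{u}\cup\rad{\bf A}\rangle=R_{\fr{u}}$ is prime and the conclusion of Lemma \ref{cor:Rustar}(4) reads exactly $R_{\fr{u}}^{*}=R_{\fr{u}}$. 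There is no circularity, since Lemma \ref{cor:Rustar} precedes the present lemma and its proof does not invoke it. The trade-off is that the inclusion $\langle\fr{u}\cup\fr{x}\rangle^{*}\subseteq R_{\fr{u}}$ in part (4) is only sketched in the paper (``can be proved again via calculations''), so the paper's self-contained proof of the present lemma effectively supplies, for the special case $\fr{x}=\rad{\bf A}$, precisely the computation that your one-line derivation leaves buried in the earlier lemma; your approach buys brevity at the cost of resting on that sketched step.
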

\begin{proof}
We prove first that $R_{\fr{u}} \subseteq  R_{\fr{u}}^{*}$. Let $a \in R_{\fr{u}}$. We want to show that $\neg a \notin R_{\fr{u}}$. Again using the decomposition result (identity \ref{eq:el}), we may write $a = (\neg u \lor x) \land (u \lor \neg x) =  (u \land x) \lor (\neg u \land \neg x)$ for some $u\in\bool{\bf A}$ and $x\in\rad{\bf A}$. Note that $\neg x \notin R_{\fr{u}}$ since in every $\sbp$-algebra, for every $u \in \scrB({\bf A}), x, y \in \scrR({\bf A}), \,u \land y \leq \neg x$ iff $u = 0$, as can be proven in $\scrB({\bf A}) \otimes_{e}^{\delta} \scrR({\bf A})$ by recalling that Boolean elements, elements of the radical, and elements of the coradical are respectively of the kind $[u, 1], [1, x], [0, y]$ (see \cite{AguzFlamUgol}). This implies that $u \land x \in R_{\fr{u}}$, and thus $u \in \fr{u}$. We have $\neg a = (u \land \neg x) \lor (\neg u \land \neg\neg x)$. Suppose that $\neg a \in R_{\fr{u}}$. Then either $u \land \neg x \in R_{\fr{u}}$ or $ \neg u \land \neg\neg x \in R_{\fr{u}}$. But $u \land \neg x \in R_{\fr{u}}$ implies $\neg x  \in R_{\fr{u}}$, which is a contradiction, and $ \neg u \land \neg\neg x \in R_{\fr{u}}$ implies $ \neg u \in R_{\fr{u}}$ which is again a contradiction since $u \in \fr{u}$. Thus, $\neg a \notin R_{\fr{u}}$, which implies that $R_{\fr{u}} \subseteq  R_{\fr{u}}^{*}$.

We now prove that $R_{\fr{u}}^{*} \subseteq  R_{\fr{u}}$. Let $a \in R_{\fr{u}}^{*}$. Then $\neg a \notin R_{\fr{u}}$. Again, we have $a = (u \land x) \lor (\neg u \land \neg x)$, $\neg a = (u \land \neg x) \lor (\neg u \land \neg\neg x)$ for some $u\in\bool{\bf A}$ and $x\in\rad{\bf A}$. If $u \notin \fr{u}$, we have $\neg u \in \fr{u}$ and thus, because $\neg\neg c \in \scrR({\bf A})$, $\neg u \land \neg \neg c \in R_{\fr{u}}$. But $\neg u \land \neg \neg x \leq \neg a$, implying $\neg a \in   R_{\fr{u}}$, a contradiction. It follows that $u \in \fr{u}$, and thus $u \land x \in R_{\fr{u}}$, $u \land x \leq a$ and $a \in R_{\fr{u}}$. This shows $R_{\fr{u}} = R_{\fr{u}}^{*}$.
\end{proof}

\begin{theorem}\label{th:dualiso}
Let ${\bf A}$ be a $\sbp$-algebra. Then
 $\mathcal{S}({\bf A})$ is order isomorphic to $\mathcal{F}_{\bf A}^{\bowtie}$.
\end{theorem}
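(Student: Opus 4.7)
The plan is to exhibit an explicit order isomorphism $\Phi\colon\mathcal{S}({\bf A})\to\mathcal{F}_{\bf A}^{\bowtie}$ together with its inverse, with case analysis driven by Lemma \ref{lemma:incl}: every prime filter $\fr{a}$ is either \emph{lower} ($\fr{a}\subseteq R_{\fr{u}_\fr{a}}$) or \emph{strictly upper} ($R_{\fr{u}_\fr{a}}\subsetneq\fr{a}$). Accordingly, I will set
\[
\Phi(\fr{a})=\begin{cases}
(\fr{u}_\fr{a},\,\fr{a}\cap\rad{\bf A}),& \text{if }\fr{a}\subseteq R_{\fr{u}_\fr{a}},\\
+(\fr{u}_\fr{a},\,\delta[\fr{a}^{*}\cap\rad{\bf A}]),& \text{if }R_{\fr{u}_\fr{a}}\subsetneq\fr{a},
\end{cases}
\]
with intended inverse $\Psi$ sending $(\fr{u},\fr{x})\mapsto\langle\fr{u}\cup\fr{x}\rangle$ and $+(\fr{u},\fr{y})\mapsto\langle\fr{u}\cup\delta^{-1}[\fr{y}]\rangle^{*}$. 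Well-definedness on the lower branch is immediate from Lemma \ref{lemmafixes}. On the upper branch, I will observe that $\fr{a}^{*}\cap\rad{\bf A}$ is $\delta$-saturated (using that $\neg\delta(x)=\neg x$ makes membership of $x$ in $\fr{a}^{*}$ equivalent to that of $\delta(x)$), so that $\delta[\fr{a}^{*}\cap\rad{\bf A}]\in\Xd{\delta[\rad{\bf A}]}$ with $\delta^{-1}[\delta[\fr{a}^{*}\cap\rad{\bf A}]]=\fr{a}^{*}\cap\rad{\bf A}$; that $(\fr{u}_\fr{a},\fr{a}^{*}\cap\rad{\bf A})\in\Da{A}$ follows from Lemma \ref{lemmafixes} since $\fr{u}_\fr{a}=\fr{u}_{\fr{a}^{*}}$ fixes $\fr{a}^{*}\cap\rad{\bf A}$; and $\fr{a}^{*}\cap\rad{\bf A}\neq\rad{\bf A}$ because otherwise Lemma \ref{cor:Rustar}(4) would give $\fr{a}^{**}=R_{\fr{u}_\fr{a}}$, contradicting $R_{\fr{u}_\fr{a}}\subsetneq\fr{a}\subseteq\fr{a}^{**}$.

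For $\Psi\circ\Phi=\mathrm{id}$, the lower branch reduces to the claim $\fr{a}=\langle\fr{u}_\fr{a}\cup(\fr{a}\cap\rad{\bf A})\rangle$: given $a\in\fr{a}$ with its decomposition $a=(u\land x)\lor(\neg u\land\neg x)$ from Section \ref{sec:quadruples}, the primality of $\fr{a}$ forces one of the two disjuncts into $\fr{a}$, but the branch $\neg u\land\neg x\in\fr{a}$ would yield $\neg x\in\fr{a}\subseteq R_{\fr{u}_\fr{a}}$ together with $x\in\rad{\bf A}\subseteq R_{\fr{u}_\fr{a}}$, contradicting $R_{\fr{u}_\fr{a}}=R_{\fr{u}_\fr{a}}^{*}$ from Lemma \ref{lem:radical is star fixed}; hence $u\land x\in\fr{a}$, giving $u\in\fr{u}_\fr{a}$ and $x\in\fr{a}\cap\rad{\bf A}$. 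On the upper branch, applying the lower-case identity to $\fr{a}^{*}$ (which is lower by Lemma \ref{lemma:incl}) together with Lemma \ref{cor:Rustar}(2) yields $\Psi(\Phi(\fr{a}))=\langle\fr{u}_\fr{a}\cup(\fr{a}^{*}\cap\rad{\bf A})\rangle^{*}=\fr{a}^{**}$, so it remains to prove the key identity $\fr{a}^{**}=\fr{a}$ for strictly upper $\fr{a}$. For $b\in\fr{a}^{**}$ I will compute
\[
\neg\neg b=(u\land\delta(x))\lor(\neg u\land\neg x)
\]
by successive applications of De Morgan together with $\neg\neg u=u$ and $\neg\neg\neg x=\neg x$, writing $b=(u\land x)\lor(\neg u\land\neg x)$. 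A case split on whether $u\in\fr{u}_\fr{a}$ then produces $b\in\fr{a}$ in either case: if $u\in\fr{u}_\fr{a}$, then $x\in\rad{\bf A}\subseteq\fr{a}$ forces $b\geq u\land x\in\fr{a}$; if $u\notin\fr{u}_\fr{a}$, then primality of $\fr{a}$ applied to $\neg\neg b\in\fr{a}$, coupled with $u\notin\fr{a}$, forces $\neg u\land\neg x\in\fr{a}$, whence $b\geq\neg u\land\neg x\in\fr{a}$. The reverse composition $\Phi\circ\Psi=\mathrm{id}$ is analogous, using Proposition \ref{prop:prime} on the lower branch, and on the upper branch the fact that $\fr{b}^{**}\cap\rad{\bf A}=\fr{b}\cap\rad{\bf A}=\delta^{-1}[\fr{y}]$ for $\fr{b}=\langle\fr{u}\cup\delta^{-1}[\fr{y}]\rangle$, a consequence of the $\delta$-saturation of $\delta^{-1}[\fr{y}]$.

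Order preservation is finally checked along the three clauses of Definition \ref{def:fdual}: the within-$\Da{A}$ case follows from monotonicity of $\langle\cdot\rangle$ together with recovery of components via Proposition \ref{prop:prime}; the within-$\Da{A}^\partial$ case matches the order-reversing nature of $^{*}$ combined with the monotonicity of $\delta$ and $\delta^{-1}$; and the cross-case $(\fr{u},\fr{x})\sqsubseteq +(\fr{u},\fr{y})$ translates to the chain $\langle\fr{u}\cup\fr{x}\rangle\subseteq R_{\fr{u}}\subseteq\langle\fr{u}\cup\delta^{-1}[\fr{y}]\rangle^{*}$, where the second inclusion is Lemma \ref{lemma:incl} applied to the strictly lower filter $\langle\fr{u}\cup\delta^{-1}[\fr{y}]\rangle$. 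The main obstacle throughout is the identity $\fr{a}^{**}=\fr{a}$ for strictly upper $\fr{a}$, which is the only step that genuinely exploits both the $\sbp$ normal form and the upper hypothesis $\rad{\bf A}\subseteq\fr{a}$.
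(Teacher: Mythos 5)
Your map $\Phi$ coincides with the paper's isomorphism $\alpha$ (your case split $\fr{a}\subseteq R_{\fr{u}_\fr{a}}$ versus $R_{\fr{u}_\fr{a}}\subsetneq\fr{a}$ is equivalent to the paper's $\fr{a}\subseteq\fr{a}^{*}$ versus $\fr{a}^{*}\subsetneq\fr{a}$ via Lemma \ref{lemma:incl} and Lemma \ref{lem:radical is star fixed}), and you lean on the same supporting results (Proposition \ref{prop:prime}, Lemma \ref{cor:Rustar}, Lemma \ref{lemma:incl}), so this is essentially the paper's proof. The only organizational difference is that you establish bijectivity by exhibiting a two-sided inverse $\Psi$ — reproving Lemma \ref{lem:doubleneg} inline to get $\fr{a}^{**}=\fr{a}$ on the upper branch — rather than arguing surjectivity and injectivity separately, which is sound and, if anything, tightens the paper's brief treatment of injectivity.
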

\begin{proof}
Let $\alpha: \mathcal{S}({\bf A}) \to \mathcal{F}_{\bf A}^{\bowtie}$ be defined as follows:
$$
\alpha(\mathfrak{a}) = \left\{\begin{array}{ll}
 (\mathfrak{a} \cap \scrB({\bf A}),\, \mathfrak{a} \cap \scrR({\bf A})), &\mbox{ if } \mathfrak{a} \subseteq \mathfrak{a}^{*},\\
 +(\mathfrak{a}^{*} \cap \scrB({\bf A}), \delta[\fr{a}^*\cap\rad{\bf A}]) &\mbox{ otherwise},
\end{array}\right.
$$
where $\delta\colon A\to A$ is defined by $\delta(a)=\neg\neg a$ as usual. We will show that $\alpha$ defines an order isomorphism. We first show that it is a well-defined map. 
%\begin{claim}
%$\alpha$ is a well-defined map from $\mathcal{S}({\bf A})$ to $\mathcal{F}_{A}^{\bowtie}$.
%\end{claim}
%\begin{proof}
We only need prove that, given $\mathfrak{a} \in \mathcal{S}({\bf A})$, $\alpha(\mathfrak{a}) \in \mathcal{F}_{\bf A}^{\bowtie}$. By Lemma \ref{lem:routleystar}, either $\mathfrak{a} \subseteq \mathfrak{a}^{*}$ or $\mathfrak{a}^{*} \subset \mathfrak{a}$. Firstly, suppose that $\mathfrak{a} \subseteq \mathfrak{a}^{*}$. It is easy to see that $\mathfrak{a} \cap \bool{\bf A}$ is a prime filter of $\bool{\bf A}$, since both $\mathfrak{a}$ and $\bool{\bf A}$ are upward closed, closed under meets and moreover, given any $u, v \in \bool{\bf A}$ such that $u \lor v \in \mathfrak{a}$, then either $u \in \mathfrak{a} \cap \bool{\bf A}$ or $v \in \mathfrak{a} \cap \bool{\bf A}$. With exactly the same reasoning, we can prove that $\mathfrak{a} \cap \scrR({\bf A})$ is a prime filter of $\scrR({\bf A})$.
%It is also easy to prove that $\alpha(\mathfrak{a})$ respects last condition of Definition \ref{deffiltri}.
Let us now check condition \ref{eq:da}. Let  $u \in \bool{\bf A}$ and $x \in \scrR({\bf A})$ such that $u \lor x \in \mathfrak{a} \cap \scrR({\bf A})$. Since $\mathfrak{a}$ is prime,  either $u \in \mathfrak{a}$, which implies $u \in \mathfrak{a} \cap \bool{\bf A}$, or $x \in \mathfrak{a}$, in which case $x \in \mathfrak{a} \cap \scrR({\bf A})$. 
Now, if $\mathfrak{a}^{*} \subset \mathfrak{a}$, 
then $ \mathfrak{a}^{*} \cap \bool{\bf A}$  is a prime filter of $\bool{\bf A}$ and $\mathfrak{a}^{*} \cap \scrR({\bf A})$ is a prime filter of the radical, reasoning as before. Thus, it is easy to see that $\delta[\mathfrak{a}^{*} \cap \scrR({\bf A})]$ is a prime filter of $\delta[\scrR({\bf A})]$. We need to show that $(\mathfrak{a}^{*} \cap \bool{\bf A}, \delta^{-1}[\delta[\fr{a}^*\cap\rad{\bf A}]]) \in \mathcal{F}_{\bf A}$, in particular that given $u \lor x \in \delta^{-1}[\delta[\fr{a}^*\cap\rad{\bf A}]]$ either $u \in \mathfrak{a}^{*} \cap \bool{\bf A}$ or $x \in \delta^{-1}[\delta[\fr{a}^*\cap\rad{\bf A}]]$. If $u \lor x \in \delta^{-1}[\delta[\fr{a}^*\cap\rad{\bf A}]]$ then $\delta(u \lor x) = \neg\neg (u\lor x) = u \lor \neg\neg x \in \delta[\fr{a}^*\cap\rad{\bf A}] \subseteq \fr{a}^*\cap\rad{\bf A} \subseteq \fr{a}^*$ which is prime, thus either $u \in \fr{a}^{*}$ or $\delta(x) \in \fr{a}^{*}$. This means $u \in \mathfrak{a}^{*} \cap \bool{\bf A}$ or $\delta(x) \in \delta[\fr{a}^*\cap\rad{\bf A}]$ (i.e. $x \in \delta^{-1}[\delta[\fr{a}^*\cap\rad{\bf A}]]$), thus $(\mathfrak{a}^{*} \cap \bool{\bf A}, \delta^{-1}[\delta[\fr{a}^*\cap\rad{\bf A}]]) \in \mathcal{F}_{\bf A}$.

%Thus, $(\mathfrak{p}^{*} \cap \scrB(A), \delta[\fr{p}^*\cap\rad{\bf A}]) \in \mathcal{F}_{A}^{\partial}$ by the very definition of  $\mathcal{F}_{A}^{\partial}$. %The fact that $(\mathfrak{p} \cap \scrB(A), \neg (\scrC({\bf A}) \setminus \mathfrak{p})) \in \mathcal{F}_{A}^{\partial}$ follows from Lemma \ref{lemmarot}.
%\end{proof}

We shall now prove that $\alpha$ is a bijection.
%\begin{claim}
%$\alpha$ is a bijection.
%\end{claim}
%\begin{proof}
We first prove surjectivity. Recall that $(\mathfrak{u}, \mathfrak{x}) \in \mathcal{F}_{\bf A}$ iff $\mathfrak{u}$ fixes $\mathfrak{x}$ by Lemma \ref{lemmafixes}. %Thus, by definition, there exists a prime filter $\mathfrak{p}$ of ${\bf A}$ such that $\mathfrak{u}\subseteq\mathfrak{p}$ and $\mathfrak{p}=\mathfrak{a}\cap\scrR({\bf A})$. 
Let us consider $\fr{a} = \langle \fr{u} \cup \fr{x}\rangle$ which is prime from Proposition \ref{prop:prime}. Moreover $\mathfrak{a}\cap \scrB({\bf A}) = \mathfrak{u}$, $\mathfrak{a}\cap \scrR({\bf A}) = \mathfrak{x}$ and $\fr{a} \subseteq \fr{a}^{*}$ from Lemma \ref{cor:Rustar}. Thus $\alpha(\fr{a}) = (\fr{u}, \fr{x})$. Now let $+(\mathfrak{u}, \mathfrak{y}) \in \mathcal{F}_{\bf A}^{\partial}$. By definition, this means that if $\mathfrak{x} = \delta^{-1}[\mathfrak{y}]$, we have $(\mathfrak{u}, \mathfrak{x}) \in \mathcal{F}_{\bf A}$. Let $\fr{a} = \langle \fr{u} \cup \fr{x}\rangle^{*}$. Then by Lemma \ref{cor:Rustar} we get $\fr{a}^{*} \subseteq \fr{a}$ and from Lemma \ref{lem:sameultrafiltersite}, we have $\fr{a} \cap \scrB({\bf A}) = \fr{u}$.
It is easy to check via calculations that  $\delta[\fr{a}^*\cap\rad{\bf A}] = \neg (\scrC({\bf A}) \setminus \mathfrak{a}) $ for every $\fr{a} \in \mathcal{S}({\bf A})$, where $\scrC({\bf A})$ is the coradical of ${\bf A}$ (see Lemma \ref{lemma:corad}). 
Now, from Lemma \ref{cor:Rustar} we get that $\scrC({\bf A}) \setminus \mathfrak{a} = \{ \neg x: \neg\neg x \in \delta[\fr{y}] \cap \delta[\scrR({\bf A})]\}$ and then $\neg (\scrC({\bf A}) \setminus \mathfrak{a}) = \delta[\fr{y}]$. Thus,  $\alpha(\mathfrak{a})= +(\mathfrak{u}, \mathfrak{y})$ and
%We need to prove that $\neg (\scrC({\bf A}) \setminus \mathfrak{p'}) = \mathfrak{z} =  \delta[\mathfrak{q}]$. Let $x \in \neg (\scrC({\bf A}) \setminus \mathfrak{p'})$. Then $x = \neg y$, with $y \in \scrC({\bf A}) \setminus \mathfrak{p'}$, which means that $y \in \scrC({\bf A}), \neg y \in \mathfrak{p}$. Hence in particular $\neg y \in \mathfrak{p}\cap \scrR({\bf A}) = \mathfrak{q}$. Moreover, $\neg \neg x = \neg \neg \neg y = \neg y = x$, thus $x \in \delta[\mathfrak{q}]$. Viceversa, let $x \in \delta[\mathfrak{q}]$. Then $ x = \neg\neg y, y \in \mathfrak{q} \subseteq \mathfrak{p}$. Being $\mathfrak{p}$ a filter and $ y \leq \neg\neg y$, we have that also $x = \neg\neg y \in \mathfrak{p}$. Hence $\neg y \notin \mathfrak{p'}$, with $\neg y \in \scrC({\bf A})$. Thus, $ x = \neg\neg y \in \neg (\scrC({\bf A}) \setminus \mathfrak{p'})$. 
$\alpha$ is surjective.
It is easy to prove the injectivity of $\alpha$ using Equation \ref{eq:el}.

It remains to prove only that
%\begin{claim}
$$ \mathfrak{a}_{1} \subseteq  \mathfrak{a}_{2} \mbox{ iff } \alpha(\mathfrak{a}_{1}) \leq \alpha(\mathfrak{a}_{2}) \mbox{ for any } \mathfrak{a}_{1},  \mathfrak{a}_{2} \in \Xd{\bf A}.$$
%\end{claim}
%\begin{proof}
%We shall prove that for any $\mathfrak{p}_{1},  \mathfrak{p}_{2} \in {\bf A}_{*}$, $ \mathfrak{p}_{1} \subseteq  \mathfrak{p}_{2}$ iff $ \alpha(\mathfrak{p}_{1}) \leq \alpha(\mathfrak{p}_{2})$. 
The fact that $ \mathfrak{a}_{1} \subseteq  \mathfrak{a}_{2}$ implies $ \alpha(\mathfrak{a}_{1}) \leq \alpha(\mathfrak{a}_{2})$ follows easily from the definition.
Let us now suppose that $\alpha(\mathfrak{a}_{1}) \leq \alpha(\mathfrak{a}_{2})$. %Then necessarily $\mathfrak{a}_{1} \cap \bool{\bf A} = \mathfrak{a}_{2} \cap \bool{\bf A}$. 
We write $\mathfrak{u}_{1}= \mathfrak{a}_{1} \cap \bool{\bf A},\, \mathfrak{x}_{1} = \mathfrak{a}_{1} \cap \scrR({\bf A}), \mathfrak{y}_{1} = \delta[\mathfrak{a}_{1} \cap \scrR({\bf A})]$, and $\mathfrak{u}_{2}= \mathfrak{a}_{2} \cap \bool{\bf A},\, \mathfrak{x}_{2} = \mathfrak{a}_{2}\cap \scrR({\bf A}), \mathfrak{y}_{2} = \delta[\mathfrak{a}_{2} \cap \scrR({\bf A})]$. 
We distinguish four cases:
\begin{enumerate}
\item $\mathfrak{a}_{1} \subseteq \mathfrak{a}_{1}^{*}$, $\mathfrak{a}_{2} \subseteq \mathfrak{a}_{2}^{*}$.
Then $\mathfrak{a}_{1} \cap \scrR({\bf A}) \subseteq \mathfrak{a}_{2} \cap \scrR({\bf A})$. Let us prove that if $a \in \mathfrak{a}_{1}$ then $a \in \mathfrak{a}_{2}$. Via Equation \ref{eq:el}, $a =  (\neg u \lor x) \land (u \lor \neg x)$ for some $u\in\bool{\bf A}$ and $x\in\rad{\bf A}$. Thus, $\neg u \lor x, u \lor \neg x \in \mathfrak{a}_{1}$, which is prime. Thus, by $\neg u \lor x \in \mathfrak{a}_{1}$, we get that or $\neg u \in \mathfrak{u}_{1} \subseteq \mathfrak{u}_{2}$, or $x \in \mathfrak{x}_{1} \subseteq \mathfrak{x}_{2}$. In both cases $\neg u \lor x \in \mathfrak{a}_{2}$. By $u \lor \neg x \in \mathfrak{a}_{1}$ (which follows from $\neg x \notin \mathfrak{a}_{1}$ as a consequence of Lemma \ref{lemma:incl} and the fact that $\neg x \leq y$ for every $y \in \scrR({\bf A})$), we have that $u \in \mathfrak{u}_{1}\subseteq \mathfrak{u}_{2}$ and $u \lor \neg x \in \mathfrak{a}_{2}$.
\item $\mathfrak{a}_{1}^{*} \subseteq \mathfrak{a}_{1}$, $\mathfrak{a}_{2} \subseteq \mathfrak{a}_{2}^{*}$. By the definition of the order on $\mathcal{F}_{\bf A}^{\bowtie}$, this contradicts the fact that $\alpha(\mathfrak{a}_{1}) \leq \alpha(\mathfrak{a}_{2})$.
\item $\mathfrak{a}_{1} \subseteq \mathfrak{a}_{1}^{*}$, $\mathfrak{a}_{2}^{*} \subseteq \mathfrak{a}_{2}$. From the hypothesis it follows that $\fr{u}_{1} = \fr{u}_{2}$ and then using Lemma \ref{lemma:incl}, $\mathfrak{a}_{1} \subseteq R_{\fr u_{1}} \subseteq \mathfrak{a}_{2}$.
\item $\mathfrak{a}_{1}* \subseteq \mathfrak{a}_{1}$, $\mathfrak{a}_{2}* \subseteq \mathfrak{a}_{2}$. Again, since $\delta[\fr{a}^*\cap\rad{\bf A}] = \neg (\scrC({\bf A}) \setminus \mathfrak{a}) $ for every $\fr{a} \in \mathcal{S}({\bf A})$, we have $\neg (\scrC({\bf A}) \setminus \mathfrak{a}_{2}) \subseteq \neg(\scrC({\bf A}) \setminus \mathfrak{a}_{1})$. Notice that this implies that $\mathfrak{a}_{1} \cap \scrC({\bf A}) \subseteq \mathfrak{a}_{2} \cap \scrC({\bf A})$. Indeed, $\neg (\scrC({\bf A}) \setminus \mathfrak{a}_{2}) \subseteq \neg(\scrC({\bf A}) \setminus \mathfrak{a}_{1})$ implies $\neg \neg (\scrC({\bf A}) \setminus \mathfrak{a}_{2}) \subseteq \neg\neg(\scrC({\bf A}) \setminus \mathfrak{a}_{1})$ and since $\neg \neg (\scrC({\bf A}) \setminus \mathfrak{a}_{1,2}) = (\scrC({\bf A}) \setminus \mathfrak{a}_{1,2})$, we have that $\scrC({\bf A}) \setminus \mathfrak{a}_{2} \subseteq \scrC({\bf A}) \setminus \mathfrak{a}_{1}$, which implies $\mathfrak{a}_{1} \cap \scrC({\bf A}) \subseteq \mathfrak{a}_{2} \cap \scrC({\bf A})$.
We now consider again $a \in \mathfrak{a}_{1}$, where as usual we write $a = (\neg u \lor x) \land (u \lor \neg x)$ for $u\in\bool{\bf A}$ and $x\in\rad{\bf A}$. Then $\neg u \lor x, u \lor \neg x \in \mathfrak{a}_{1}$ by primality. 
Because any $x \in \scrR({\bf A})$ is both in $\mathfrak{a}_{1}$ and $\mathfrak{a}_{2}$, clearly $\neg u \lor x$ is in $\mathfrak{a}_{2}$. By $u \lor \neg x \in \mathfrak{a}_{1}$, we have $u \in \mathfrak{u}_{1}\subseteq \mathfrak{u}_{2}$, or $\neg x \in \mathfrak{a}_{1} \cap \scrC({\bf A}) \subseteq \mathfrak{a}_{2} \cap \scrC({\bf A})$. In both cases $u \lor \neg x \in \mathfrak{a}_{2}$. Hence $a \in \mathfrak{a}_{2}$, and the proof is settled. 
\end{enumerate}%\vspace{-1cm}
%\end{proof}
\end{proof}

We will define a topology on $\Da{\bf A}^{\bowtie}$ such that $\alpha$ is continuous. 
\begin{definition}\label{def:ftopology}
Given an $\sbp$-algebra ${\bf A}$ and clopen up-sets $U\subseteq\Xd{\bool{\bf A}}$, $V\subseteq\Xd{\rad{\bf A}}$, define
$$W_{(U,V)} = [(U\times V)\cup +(U\times\Xd{\delta[\rad{\bf A}]}\cup\Xd{\bool{\bf A}}\times\delta[V]^\comp)]\cap\Da{\bf A}^{\bowtie},$$
where $\delta\colon \rad{\bf A}\to \rad{\bf A}$ is defined by $\delta(x)=\neg\neg x$ as usual, $\delta[V] = \{\delta[\fr{x}] : \fr{x}\in V\}$, and for a subset $P\subseteq\Xd{\bool{\bf A}}\times\Xd{\rad{\bf A}}$, $+P = \{+p : p\in P\}$.
\end{definition}

\begin{remark}\label{rem:extrinsictopology}
Let $\Delta\colon\Xd{\rad{\bf A}}\to\Xd{\rad{\bf A}}$ be defined by $\Delta(\fr{x})=\delta^{-1}[\fr{x}]$ (i.e., $\Delta$ is the dual of the lattice homomorphism given by $\delta(x)=\neg\neg x$). One may easily show that $\Delta$ is a closure operator on $\Xd{\rad{\bf A}}$, and we denote by $\Xd{\rad{\bf A}}_\Delta := \Delta[\Xd{\rad{\bf A}}] = \{\fr{x}\in\Xd{\rad{\bf A}} : \Delta(\fr{x})=\fr{x}\}$ its set of fixed points. Let $\beta\colon\Xd{\rad{\bf A}}_\Delta\to\Xd{\delta[\rad{\bf A}]}$ be the map given by $\beta(\fr{x})=\fr{x}\cap\delta[\rad{\bf A}]$. An identical argument to that given in \cite[Theorem 12 and Lemma 25]{BezGhi} shows that $\beta$ is an isomorphism of Priestley spaces when $\Xd{\rad{\bf A}}_\Delta$ is viewed as a subspace of $\Xd{\rad{\bf A}}$, and the inverse of $\beta$ is given by $\fr{x}\mapsto\Delta(\fr{x})$.

If $V\subseteq\Xd{\rad{\bf A}}$ is a clopen up-set, note also that one may readily show that the image of $\delta[V]=\{\delta[\fr{x}] : \fr{x}\in V\}$ under $\beta^{-1}$ is precisely $\Delta[V] = \{\fr{x}\in V : \Delta(\fr{x})=\fr{x}\} = V\cap \Xd{\rad{\bf A}}_\Delta$. These comments show that we may identify $\Xd{\delta[\rad{\bf A}]}$ and $\Xd{\rad{\bf A}}_\Delta$, as well as $\Delta[V]$ and $\delta[V]$, in the above definition of the topology on $\Da{A}^{\bowtie}$. This provides that the definitions of the sets $W_{(U,V)}$ may be rewritten to depend only on the dual map $\Delta$, and not on $\delta$.

\begin{comment}
Define maps $\beta_1\colon\Xd{\rad{\bf A}}\to$

Let $+(\fr{u},\fr{y})\in\Da{A}^\partial$. Then by definition $\fr{y}\in\Xd{\delta[\rad{\bf A}]}$, so in particular $\fr{y}=\delta[\fr{y}]$. If $V\subseteq\Xd{\rad{\bf A}}$ is an up-set and $\Delta\colon\Xd{\rad{\bf A}}\to\Xd{\rad{\bf A}}$ be defined by $\Delta(\fr{x})=\delta^{-1}[\fr{x}]$ (i.e., $\Delta$ is the dual of the lattice homomorphism $\delta$), then an easy argument shows that $\fr{y}\in\delta[V]$ if and only if $\fr{y}\in\Delta^{-1}[V]$. This implies that $\delta$ may be replaced by $\Delta^{-1}$ in the definition of $W_(U,V)$ above, and hence the definition may be made independent of the representation of the spaces $\Xd{\bf A}$, $\Xd{\bool{\bf A}}$, and $\Xd{\rad{\bf A}}$ in terms of filters.
\end{comment}
\end{remark}

\begin{lemma}\label{lem:delta image of a clopen}
Let ${\bf A}$ be an $\sbp$-algebra and let $x\in\rad{\bf A}$. Then $\delta[\varphi_{\rad{\bf A}}(x)]=\varphi_{\delta[\rad{\bf A}]}(\delta(x))$.
\end{lemma}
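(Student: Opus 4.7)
The plan is to verify the two inclusions directly, exploiting that $\delta$ is a nucleus (hence idempotent) which is wdl-admissible (hence preserves both $\meet$ and $\join$), so that in particular $\delta[\rad{\bf A}]$ is the set of fixed points of $\delta$ and is a sublattice of $\rad{\bf A}$.

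First I would establish the auxiliary fact that for any $\fr{x}\in\Xd{\rad{\bf A}}$, the set $\delta[\fr{x}]$ is a prime filter of $\delta[\rad{\bf A}]$. Nonemptiness is immediate. For upward-closure in $\delta[\rad{\bf A}]$, if $\delta(a)\leq b$ with $a\in\fr{x}$ and $b\in\delta[\rad{\bf A}]$, then $b=\delta(b)\geq\delta(a)\geq a$, so $b\in\fr{x}$ and hence $b=\delta(b)\in\delta[\fr{x}]$. Closure under meets uses $\delta(a)\meet\delta(b)=\delta(a\meet b)$. For primality, if $u\join v\in\delta[\fr{x}]$ with $u,v\in\delta[\rad{\bf A}]$, then since $\delta$ preserves joins and fixes elements of $\delta[\rad{\bf A}]$, $u\join v\in\delta[\rad{\bf A}]\cap\fr{x}$; primality of $\fr{x}$ then gives $u\in\fr{x}$ or $v\in\fr{x}$, say the former, whence $u=\delta(u)\in\delta[\fr{x}]$.

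The forward inclusion $\delta[\varphi_{\rad{\bf A}}(x)]\subseteq\varphi_{\delta[\rad{\bf A}]}(\delta(x))$ is then immediate: if $\fr{x}\in\varphi_{\rad{\bf A}}(x)$, then $x\in\fr{x}$, so $\delta(x)\in\delta[\fr{x}]$, and $\delta[\fr{x}]$ is a prime filter of $\delta[\rad{\bf A}]$ by the previous paragraph. For the reverse inclusion, given $\fr{y}\in\varphi_{\delta[\rad{\bf A}]}(\delta(x))$ the natural candidate is $\fr{x}:=\delta^{-1}[\fr{y}]$, which is a prime filter of $\rad{\bf A}$ because $\delta\colon\rad{\bf A}\to\delta[\rad{\bf A}]$ is a lattice homomorphism (indeed a surjection onto $\delta[\rad{\bf A}]$). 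Clearly $x\in\fr{x}$ since $\delta(x)\in\fr{y}$. The remaining task is to check $\delta[\fr{x}]=\fr{y}$: the inclusion $\delta[\fr{x}]\subseteq\fr{y}$ is immediate from the definition of $\fr{x}$, and conversely, if $b\in\fr{y}$, then $b\in\delta[\rad{\bf A}]$ so $b=\delta(b)$, giving $b\in\fr{x}$ and hence $b=\delta(b)\in\delta[\fr{x}]$.

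There is no serious obstacle here; the only point requiring care is confirming that $\delta[\fr{x}]$ is prime (not merely that it is a filter), and this is where both the join-preservation of $\delta$ and the identification of $\delta[\rad{\bf A}]$ with the fixed points of $\delta$ are used. The overall argument is essentially the observation that $\delta$ behaves as a surjective bounded lattice homomorphism onto $\delta[\rad{\bf A}]$, compatibly with Remark \ref{rem:extrinsictopology}'s identification of $\Xd{\delta[\rad{\bf A}]}$ with the $\Delta$-fixed points $\Xd{\rad{\bf A}}_\Delta$ via $\fr{x}\mapsto\fr{x}\cap\delta[\rad{\bf A}]$; indeed one may verify $\delta[\fr{x}]=\fr{x}\cap\delta[\rad{\bf A}]$ whenever $\delta^{-1}[\fr{x}]=\fr{x}$, but this identification is not strictly needed for the present statement.
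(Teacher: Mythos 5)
Your proposal is correct and follows essentially the same route as the paper's proof: the forward inclusion via the observation that $\delta[\fr{x}]$ is a (generalized) prime filter of $\delta[\rad{\bf A}]$ containing $\delta(x)$, and the reverse inclusion via the candidate $\fr{x}=\delta^{-1}[\fr{y}]$ together with the verification that $\delta[\delta^{-1}[\fr{y}]]=\fr{y}$. The only difference is that you spell out the "easy arguments" the paper leaves implicit (primality of $\delta[\fr{x}]$, using join-preservation and the identification of $\delta[\rad{\bf A}]$ with the fixed points of $\delta$), and these details check out.
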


\begin{proof}
Let $\fr{y}\in\delta[\varphi_{\rad{\bf A}}(x)]$. Then there exists $\fr{x}\in\varphi_{\rad{\bf A}}(x)$ with $\delta[\fr{x}]=\fr{y}$. An easy argument (using the fact that $\delta$ is a wdl-admissible map) shows that $\fr{y}=\delta[\fr{x}]\in\Xd{\delta[\rad{\bf A}]}$. Because $x\in\fr{x}$, it follows also that $\delta(x)\in\delta[\fr{x}]=\fr{y}$, whence $\fr{y}\in\varphi_{\delta[\rad{\bf A}]}(\delta(x))$ and thus $\delta[\varphi_{\rad{\bf A}}(x)]\subseteq \varphi_{\delta[\rad{\bf A}]}(\delta(x))$.

For the other inclusion, let $\fr{y}\in \varphi_{\delta[\rad{\bf A}]}(\delta(x))$, and set $\fr{x}=\delta^{-1}[\fr{y}]$. Because $\delta$ is among other things a lattice homomorphism, we have that $\fr{x}\in\Xd{\rad{\bf A}}$. Moreover, $\delta(x)\in\fr{y}$ gives that $x\in\delta^{-1}[\fr{y}]=\fr{x}$, so $\fr{x}\in\varphi_{\rad{\bf A}}(x)$. It is easy to see that $\delta[\fr{x}]=\fr{y}$, and this gives the reverse inclusion and the result.
\end{proof}

We give $\Da{\bf A}^{\bowtie}$ the topology generated by the sets $W_{(U,V)}$ and $W_{(U,V)}^\comp$, where $(U,V)$ ranges over all pairs of clopen up-sets $U\subseteq\Xd{\bool{\bf A}}$ and $V\subseteq\Xd{\rad{\bf A}}$.

\begin{lemma}\label{lem:topology}
When $\Da{\bf A}^{\bowtie}$ is endowed with the topology defined above, $\alpha$ is a continuous map.
\end{lemma}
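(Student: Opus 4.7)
The plan is to show that for each pair of clopen up-sets $U\subseteq\Xd{\bool{\bf A}}$, $V\subseteq\Xd{\rad{\bf A}}$, the preimage $\alpha^{-1}(W_{(U,V)})$ is clopen in $\Xd{\bf A}$; since the topology on $\Da{\bf A}^{\bowtie}$ is generated by the sets $W_{(U,V)}$ together with their complements, this is sufficient for continuity. Invoking Priestley duality for the constituent lattices, I would write $U=\varphi_{\bool{\bf A}}(u)$ and $V=\varphi_{\rad{\bf A}}(x)$ for suitable $u\in\bool{\bf A}$ and $x\in\rad{\bf A}$. The target is then to establish the identity
\[
\alpha^{-1}(W_{(U,V)}) \,=\, \varphi_{\bf A}\bigl((u\vee\neg x)\wedge x\bigr),
\]
whose right-hand side is a basic clopen up-set of $\Xd{\bf A}$ and so delivers the lemma.

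Unpacking $\alpha(\fr{a})\in W_{(U,V)}$ proceeds via the two clauses defining $\alpha$. When $\fr{a}\subseteq\fr{a}^*$, we have $\alpha(\fr{a})\in\Da{A}$ and the condition reduces to $u\in\fr{a}$ and $x\in\fr{a}$. When $\fr{a}^*\subsetneq\fr{a}$, we have $\alpha(\fr{a})\in\Da{A}^\partial$ and the condition becomes $\fr{a}^*\cap\bool{\bf A}\in U$ or $\delta[\fr{a}^*\cap\rad{\bf A}]\notin\delta[V]$. Lemma \ref{lem:sameultrafiltersite} identifies $\fr{a}^*\cap\bool{\bf A}$ with $\fr{a}\cap\bool{\bf A}$, while Lemma \ref{lem:delta image of a clopen} together with the standard $\mtl$-identity $\neg\neg\neg a=\neg a$ (obtained by combining $a\leq\neg\neg a$ with the antitonicity of $\neg$) rephrase the $\delta$-clause as $\neg x\in\fr{a}$. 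Primality of $\fr{a}$ then collapses the resulting disjunction to the single condition $u\vee\neg x\in\fr{a}$.

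For the inclusion $\varphi_{\bf A}((u\vee\neg x)\wedge x)\subseteq\alpha^{-1}(W_{(U,V)})$, suppose $(u\vee\neg x)\wedge x\in\fr{a}$, so both $u\vee\neg x\in\fr{a}$ and $x\in\fr{a}$. If $\fr{a}\subseteq\fr{a}^*$, Lemma \ref{lemma:incl} gives $\fr{a}\subseteq R_{\fr{u}_\fr{a}}$; the observation used in the proof of Lemma \ref{lem:radical is star fixed} that $v\wedge y\leq\neg z$ for $v\in\bool{\bf A}$ and $y,z\in\rad{\bf A}$ forces $v=0$ then excludes every nonzero coradical element from $R_{\fr{u}_\fr{a}}$, hence from $\fr{a}$. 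Primality applied to $u\vee\neg x$ therefore forces $u\in\fr{a}$, placing $\fr{a}$ in Case A. If $\fr{a}^*\subsetneq\fr{a}$, then $u\vee\neg x\in\fr{a}$ is precisely the Case B condition. The reverse inclusion is immediate: Case A yields $(u\vee\neg x)\wedge x\geq u\wedge x\in\fr{a}$, while in Case B Lemma \ref{lemma:incl} provides $\rad{\bf A}\subseteq R_{\fr{u}_\fr{a}}\subseteq\fr{a}$, so $x\in\fr{a}$ and the meet lies in $\fr{a}$.

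The chief difficulty lies in tracking the dichotomy between the two branches of $\alpha$ uniformly, and in the careful translation of the topological clause $\delta[\fr{a}^*\cap\rad{\bf A}]\notin\delta[V]$ into the algebraic clause $\neg x\in\fr{a}$. Once the explicit algebraic witness $(u\vee\neg x)\wedge x$ (essentially a simplification of the quadruple representation from Equation \ref{eq:el}) is identified as handling both cases uniformly, the verification reduces to a routine application of the preceding structural lemmas about $\sbp$-algebras.
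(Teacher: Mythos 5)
Your proposal is correct and follows essentially the same route as the paper: reduce to showing $\alpha^{-1}[W_{(U,V)}]$ is a basic clopen $\varphi_{\bf A}(a)$, split on $\fr{a}\subseteq\fr{a}^*$ versus $\fr{a}^*\subsetneq\fr{a}$, and use Lemmas \ref{lem:sameultrafiltersite}, \ref{lem:delta image of a clopen}, and \ref{lemma:incl} to translate the two membership conditions. Your witness $(u\vee\neg x)\wedge x$ equals the paper's $(u\vee\neg x)\wedge(\neg u\vee x)$ (both simplify to $(u\wedge x)\vee\neg x$ since $\neg x\le x$ for radical $x$), so the arguments coincide.
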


\begin{proof}
It suffices to show that the inverse image under $\alpha$ of the subbasis elements $W_{(U,V)}$ and $W_{(U,V)}^\comp$ are open. Let $U\subseteq\Xd{\bool{\bf A}}$ and $V\subseteq\Xd{\rad{\bf A}}$ be clopen up-sets. By (extended) Priestley duality, the functions $\varphi_{\bool{\bf A}}\colon \bool{\bf A}\to\Ad{\Xd{\bool{\bf A}}}$ and $\varphi_{\rad{\bf A}}\colon \rad{\bf A}\to\Ad{\Xd{\rad{\bf A}}}$ are isomorphisms. In particular, there hence exist $u\in\bool{\bf A}$ and $x\in\rad{\bf A}$ such that $U=\varphi_{\bool{\bf A}}(u)$ and $V=\varphi_{\rad{\bf A}}(x)$. Setting $a=(u\join\neg x)\meet (\neg u\join x)$, we will show that $\alpha^{-1}[W_{(U,V)}] = \varphi_{\bf A}(a)$.

Let $\fr{a}\in\alpha^{-1}[W_{(U,V)}]$. Then $\alpha(\fr{a})\in W_{(U,V)}$, and we consider two cases. First, if $\fr{a}\subseteq\fr{a}^*$, then $\alpha(\fr{a})=(\fr{a}\cap\bool{\bf A},\fr{a}\cap\rad{\bf A})\in U\times V$, i.e., $\fr{a}\cap\bool{\bf A}\in\varphi_{\bool{\bf A}}(u)$ and $\fr{a}\cap\rad{\bf A}\in\varphi_{\rad{\bf A}}(x)$. Hence $u\in\fr{a}\cap\bool{\bf A}$ and $x\in\fr{a}\cap\rad{\bf A}$, and in particular $u,x\in\fr{a}$. Since $\fr{a}$ is a filter, it follows that $a=(u\join\neg x)\meet (\neg u\join x)\in \fr{a}$, and therefore $\fr{a}\in\varphi_{\bf A}(a)$. Second, if $\fr{a}^*\subset\fr{a}$, then by the definition of $\alpha$ we have that $\alpha(\fr{a})=+(\fr{a}^*\cap\bool{\bf A},\delta[\fr{a}^*\cap\rad{\bf A}])$, where one of $\fr{a}^*\cap\bool{\bf A}\in U=\varphi_{\bool{\bf A}}(u)$ or $\delta[\fr{a}^*\cap\rad{\bf A}]\in \delta[V]^\comp = \delta[\varphi_{\rad{\bf A}}(x)]^\comp$ holds. If $\fr{a}^*\cap\bool{\bf A}\in\varphi_{\bool{\bf A}}(u)$, then $u\in\fr{a}^*$ and hence $u\in\fr{a}$ (since $\fr{a}$ and $\fr{a}^*$ have the same ultrafilter by Lemma \ref{lem:sameultrafiltersite}). In this event, we have that $u\lor\neg x\in\fr{a}$ since $\fr{a}$ is upward-closed. On the other hand, if $\delta[\fr{a}^*\cap\rad{\bf A}]\in \delta[\varphi_{\rad{\bf A}}(x)]^\comp$, then we observe by Lemma \ref{lem:delta image of a clopen} that $\delta[\varphi_{\rad{\bf A}}(x)]^\comp=\varphi_{\delta[\rad{\bf A}]}(\delta(x))^\comp$, and this provides that $\delta(x)\notin\delta[\fr{a}^*\cap\rad{\bf A}]$. This shows in particular that $x\notin\fr{a}^*\cap\rad{\bf A}$, and because $x\in\rad{\bf A}$ we have $x\notin\fr{a}^*$. By the definition of $^*$, this yields $\neg x\in\fr{a}$. Hence $u\join\neg x\in\fr{a}$ once again by $\fr{a}$ being upward-closed. Because $\fr{a}^{*}\subset\fr{a}$ in the present case, Lemma \ref{lemma:incl} gives that $\fr{a}^*\subseteq\scrR_{\fr{u}_\fr{a}}\subseteq\fr{a}$, so in particular $\rad{\bf A}\subseteq\fr{a}$. Hence $x\in\fr{a}$, and this gives $\neg u\join x\in\fr{a}$ as $\fr{a}$ is a filter. It follows that $u\join\neg x,\neg u\join x\in\fr{a}$, so $a=(u\join\neg x)\meet (\neg u\join x)\in\fr{a}$, i.e., $\fr{a}\in\varphi_{\bf A}(a)$. This shows that $\alpha^{-1}[W_{(U,V)}]\subseteq\varphi_{\bf A}(a)$.

We now prove the reverse inclusion, so suppose that $\fr{a}\in\varphi_{\bf A}(a)$. Then $a=(u\join\neg x)\meet (\neg u\join x)\in\fr{a}$, and since $\fr{a}$ is upward-closed we have that $u\join\neg x,\neg u\join x\in\fr{a}$. By primality, we have that both of the following conditions hold: (1) Either $u\in\fr{a}$ or $\neg x\in\fr{a}$, and (2) either $\neg u\in\fr{a}$ or $x\in\fr{a}$. We consider two cases. First, if $\fr{a}\subseteq\fr{a}^*$, then by Lemma \ref{lemma:incl} we have that $\fr{a}\subseteq\scrR_{\fr{u}_{\fr{a}}}\subseteq\fr{a}^*$. 
%Were $\neg x\in\fr{a}$, we would have that $x\notin\fr{a}^*$, contradicting $\rad{\bf A}\subseteq\fr{a}^*$.
 $\neg x\notin\fr{a}$, since $\neg x \in \scrC({\bf A})$ and $\fr{a} \subseteq \scrR_{\fr{u}_{\fr{a}}}$, so by condition (1) above $u\in\fr{a}$. Then $\neg u\notin\fr{a}$ because $\fr{a}$ is proper, so by condition (2) above we have $x\in\fr{a}$. Thus we have $u,x\in\fr{a}$, whence $\fr{a}\cap\bool{\bf A}\in\varphi_{\bool{\bf A}}(u)$ and $\fr{a}\cap\rad{\bf A}\in\varphi_{\rad{\bf A}}(x)$, i.e., $\alpha(\fr{a})\in U\times V$. In the second case, we have that $\fr{a}^*\subset\fr{a}$. By the definition of $\alpha$, we then have $\alpha(\fr{a})=+(\fr{a}^*\cap\bool{\bf A},\delta[\fr{a}^*\cap\rad{\bf A}])$. Note that by (1) we have that either $u\in\fr{a}$ or $\neg x\in\fr{a}$. If $u\in\fr{a}$, then $\fr{a}^*\cap\bool{\bf A}=\fr{a}\cap\bool{\bf A}\in\varphi_{\bool{\bf A}}(u)=U$, whence $(\fr{a}^*\cap\bool{\bf A},\delta[\fr{a}^*\cap\rad{\bf A}])\in U\times\Xd{\delta[\rad{\bf A}]}$. If $\neg x\in\fr{a}$, then as $\neg\neg\neg x = \neg x$ we have that $\neg\neg\neg x\in\fr{a}$, so that $\delta(x)=\neg\neg x\notin\fr{a}^*$. This implies that $\delta(x)\notin\fr{a}^*\cap\rad{\bf A}$, and hence $\delta(x)\notin\delta[\fr{a}^*\cap\rad{\bf A}]$, i.e., $\delta[\fr{a}^*\cap\rad{\bf A}]\in\varphi_{\delta[\rad{\bf A}]}(\delta(x))^\comp=\delta[V]^\comp$. This shows that $(\fr{a}^*\cap\bool{\bf A},\delta[\fr{a}^*\cap\rad{\bf A}])\in\Xd{\bool{\bf A}}\times\delta[V]^\comp$. It follows that $\alpha(\fr{a})\in +(U\times\Xd{\delta[\rad{\bf A}]}\cup\Xd{\bool{\bf A}}\times\delta[V]^\comp)$, completing the proof that $\varphi_{\bf A}(a)=\alpha^{-1}[W_{(U,V)}]$.

Because $\alpha^{-1}[W_{(U,V)}^\comp]=(\alpha^{-1}[W_{(U,V)}])^\comp=\varphi_{\bf A}(a)^\comp$ for $a$ as above, this shows that the $\alpha$-inverse image of subbasis elements are open (indeed, subbasis elements). This proves that $\alpha$ is continuous.
\end{proof}

Throughout the remainder of this investigation, we assume without further mention that $\Da{\bf A}^{\bowtie}$ is equipped with the topology generated by the sets $W_{(U,V)},W_{(U,V)}^\comp$. Note that the above actually shows more. Because clopen subbasis elements of $\Xd{\bf A}$ and $\Da{\bf A}^{\bowtie}$ precisely correspond under the order isomorphism $\alpha$, all structure is transported from $\Xd{\bf A}$ to $\Da{\bf A}^{\bowtie}$. In particular, $\Da{\bf A}^{\bowtie}$ is a Priestley space that is isomorphic as a Priestley space under $\alpha$ to $\Xd{\bf A}$.

\begin{example}
In order to build intuition, we will now give an example of the construction. 
Consider the Chang MV-algebra ${\bf C}$, with domain $C = \{0,c,\ldots, nc, \ldots, 1-nc, \ldots,1-c,1\}$. %, that is isomorphic to the disconnected rotation of the cancellative hoop given by the negative cone of the integers $(\textsf{Z}^{-}, +, \ominus, {\rm min}, {\rm max}, 0)$, where $\ominus$ is the difference truncated to $0$.
Let  $C^{+} = \{1, 1-c, \ldots, 1-nc,\ldots \}$ and  $C^{-} = \{0,c,\ldots,nc,\ldots\}$. It is easy to see that $C^{+}$ is isomorphic to the cancellative hoop given by the negative cone of the integers $(\textsf{Z}^{-}, +, \ominus, {\rm min}, {\rm max}, 0)$, where $\ominus$ is the difference truncated to $0$.
${\bf C}$ is a perfect MV-algebra, and is generic for the variety $\textsf{DLMV}$. Let us consider the DLMV-algebra ${\bf C}^{2} = {\bf C} \times {\bf C}$, as in Figure \ref{fig:cxc}. Notice that the Boolean skeleton of ${\bf C}^{2}$ is the four-element Boolean algebra, $\scrB({\bf C}^{2}) = \{(0,0), (0,1), (1,0), (1,1)\}$, while the radical $\scrR({\bf C}^{2})$ is isomorphic to $\textsf{Z}^{-} \times \textsf{Z}^{-}$, and is the upper square of Figure \ref{fig:cxc}.\vspace{0.2cm}\\
 \begin{figure}[!h]
 \begin{center}
\begin{tikzpicture}
%\tikzstyle{every node}=[draw, circle, fill=black, minimum size=4pt, inner sep=0pt, label distance=1mm]
     \fill[gray!20!white](0,4) -- (-0.9,3.1) -- (0, 2.2) -- (0.9,3.1);
      \fill[gray!20!white](0,0) -- (-0.9,0.9) -- (0,1.8) -- (0.9,0.9);
       \fill[gray!20!white](-2,2) -- (-1.1,2.9) -- (-0.2,2) -- (-1.1,1.1);
        \fill[gray!20!white](2,2) -- (1.1,1.1) -- (0.2,2) -- (1.1,2.9);
        
         \node at (0,0) [draw, circle, fill=black, minimum size=2pt, inner sep=0pt, label distance=1mm]{};
 
  \node at (0,4) [draw, circle, fill=black, minimum size=2pt, inner sep=0pt, label distance=1mm]{};
   \node at (-2,2) [draw, circle, fill=black, minimum size=2pt, inner sep=0pt, label distance=1mm]{};
    \node at (2,2) [draw, circle, fill=black, minimum size=2pt, inner sep=0pt, label distance=1mm]{};
    \node at (1.5,1.5) [draw, circle, fill=black, minimum size=2pt, inner sep=0pt, label distance=1mm]{};
      \node at (0.5,0.5) [draw, circle, fill=black, minimum size=2pt, inner sep=0pt, label distance=1mm]{};
        \node at (-0.3,0.3) [draw, circle, fill=black, minimum size=2pt, inner sep=0pt, label distance=1mm]{};
      \node at (0.4,2.6) [draw, circle, fill=black, minimum size=2pt, inner sep=0pt, label distance=1mm]{};
       \node at (-1.7,1.7) [draw, circle, fill=black, minimum size=2pt, inner sep=0pt, label distance=1mm]{};
      \node at (0.4,2.6) [draw, circle, fill=black, minimum size=2pt, inner sep=0pt, label distance=1mm]{};
       \node at (-0.6,2.8) [draw, circle, fill=black, minimum size=2pt, inner sep=0pt, label distance=1mm]{};
        
    \draw (-0.5,3.5) [line width=0.5pt, dashed]  to (1.5,1.5);
      \draw (0.3,3.7) [line width=0.5pt, dashed]  to (-1.7,1.7);
          \draw (-0.3,0.3) [line width=0.5pt, dashed]  to (1.7,2.3);
      \draw (0.5,0.5) [line width=0.5pt, dashed]  to (-1.5,2.5);
             
    \draw (0,4) [line width=0.5pt,]  to (-0.9,3.1);
    \draw (-2,2) [line width=0.5pt,]  to (-1.1,2.9);
      \draw (0,0) [line width=0.5pt,]  to (-0.9,0.9);
    \draw (-2,2) [line width=0.5pt,]  to (-1.1,1.1);
      \draw (0,0) [line width=0.5pt,]  to (0.9,0.9);
    \draw (2,2) [line width=0.5pt,]  to (1.1,1.1);
      \draw (0,4) [line width=0.5pt,]  to (0.9,3.1);
    \draw (2,2) [line width=0.5pt,]  to (1.1,2.9);
    \small{
    \node at (0,4.4) {$(1,1)$};
    \node at (-2.6,2) {$(1,0)$};
    \node at (2.6,2) {$(0,1)$};
     \node at (0,-0.4) {$(0,0)$};
 \node at (2.3,1.3) {$(0, 1- nc)$};   
 \node at (1.05,0.3) {$(0, nc)$};   
  \node at (-2.5,1.45) {$(1- mc, 0)$};   
    \node at (-1,0.2) {$(mc, 0)$};     
 \footnotesize{ \node at (0.65,2.6) {$r_{n}$};  
 \node at (-0.35,2.65) {$r_{m}$};  
 }
     }
 \end{tikzpicture}
 \end{center}
  \caption{${\bf C}^{2}$.}\label{fig:cxc}
 \end{figure}
We shall now construct $\mathcal{F}_{ \bf C^{2}}$. Call $\mathfrak{u}_{1}$ the Boolean ultrafilter generated by $(1,0)$, $\mathfrak{u}_{2}$ the Boolean ultrafilter generated by $(0,1)$, $C^{+}_{1}$ the prime filter of the radical given by the segment $\{(1, y) : y \in C^{+}\}$, and $C^{+}_{2}$ the one given by the segment $\{(x, 1) : x \in C^{+}\}$. Looking at Figure \ref{fig:chdual}, it is easy to see that the pairs in $\mathcal{F}_{ C^{2}}$ will be of the kind $(\mathfrak{u}_{2}, [r_{n}))$, $(\mathfrak{u}_{1}, [r_{m}))$, plus the pairs $(\mathfrak{u}_{1}, \scrR({\bf C}^{2}))$, $(\mathfrak{u}_{1}, C^{+}_{1}))$ and $(\mathfrak{u}_{2}, \scrR({\bf C}^{2}))$, $(\mathfrak{u}_{2}, C^{+}_{2})$.
In particular, via the isomorphism $\alpha$ of Theorem \ref{th:dualiso}, we have the following correspondences: 
\begin{itemize}%[-]
\item $(\mathfrak{u}_{2}, [r_{n}))$ will correspond to the prime filter of ${\bf C}^{2}$ generated by the element $(0, 1- nc)$;
\item $(\mathfrak{u}_{1}, [r_{m}))$ to the prime filter generated by $(1- mc, 0)$;
\item $(\mathfrak{u}_{1}, \scrR({\bf C}^{2}))$ to the prime filter given by the upper and left squares of Figure \ref{fig:cxc}; analogously, $(\mathfrak{u}_{2}, \scrR({\bf C}^{2}))$ corresponds to upper-right squares;
 \item $(\mathfrak{u}_{1}, C^{+}_{1})$ to the segment $\{(1,y) : y \in C\}$, and $(\mathfrak{u}_{2}, C^{+}_{2})$ to $\{(x, 1) : x \in C\}$.
\end{itemize}

Now, we want to construct $\mathcal{F}_{\bf C^{2}}^{\bowtie}$. In ${\bf C}^{2}$ the double negation, and hence the $\delta$ of the construction, is the identity map. Thus, intuitively, we just need to rotate upwards $\mathcal{F}_{\bf C^{2}}$ to obtain $\mathcal{F}_{\bf C^{2}}^{\partial}$, and then $\mathcal{F}_{\bf C^{2}}^{\bowtie}$ is as in Figure \ref{fig:chdual}. 
Again via the isomorphism we have the following correspondences:
\begin{itemize}
\item $(\mathfrak{u}_{1}, \delta[[r_{m})])$ will correspond to the prime $\ell$-filter of ${\bf C}^{2}$ generated by the element $(mc, 0)$;
\item $(\mathfrak{u}_{2}, \delta [[r_{n})])$ to the prime $\ell$-filter generated by $(0, nc)$;
 \item $(\mathfrak{u}_{1}, \delta[C^{+}_{1}])$ to the prime $\ell$-filter given by ${\bf C}^{2} \setminus \{(0,y) : y \in C\}$, and $(\mathfrak{u}_{2}, \delta[C^{+}_{2}])$ to ${\bf C}^{2} \setminus \{(x, 0) : x \in C\}$.
\end{itemize}
It is now easy to realize that $\mathcal{F}_{ C^{2}}^{\bowtie}$ is order isomorphic to the poset of prime filters of ${\bf C}^{2}$.

 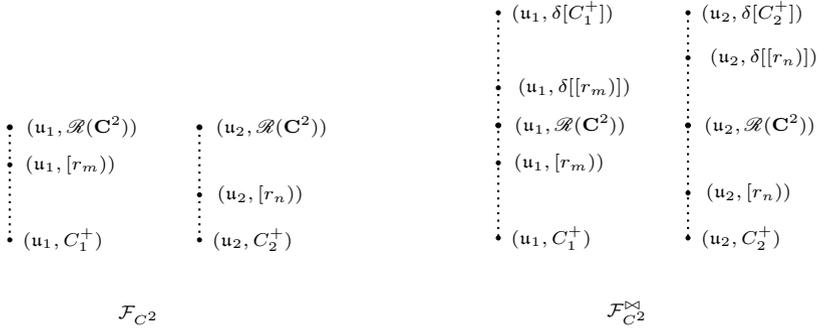
\begin{figure}
  \begin{center}
\begin{tikzpicture}
    
         \node at (0,0) [draw, circle, fill=black, minimum size=1.8pt, inner sep=0pt, label distance=1mm]{};
           \node at (0,-1.5) [draw, circle, fill=black, minimum size=1.5pt, inner sep=0pt, label distance=1mm]{};
    \node at (2.5,0) [draw, circle, fill=black, minimum size=1.8pt, inner sep=0pt, label distance=1mm]{};
       \node at (2.5,-0.9) [draw, circle, fill=black, minimum size=1.5pt, inner sep=0pt, label distance=1mm]{};
       \node at (2.5,-1.5) [draw, circle, fill=black, minimum size=1.5pt, inner sep=0pt, label distance=1mm]{};
           \node at (0,-0.5) [draw, circle, fill=black, minimum size=1.5pt, inner sep=0pt, label distance=1mm]{};
              
    \draw (0,0) [line width=0.8pt, dotted]  to (0,-1.5);  
    %\draw (0,0) [line width=0.5pt, ]  to (0,-0.7);    
    \draw (2.5,0) [line width=0.8pt, dotted]  to (2.5,-1.5);
 % \draw (2,0) [line width=0.5pt, ]  to (2,-0.7); 

    \scriptsize{
    \node at (0.95,0) {$(\mathfrak{u}_{1},\scrR({\bf C}^{2}))$};  
    \node at (0.7,-1.5) {$(\mathfrak{u}_{1},C^{+}_{1})$};  
     \node at (3.45,0) {$(\mathfrak{u}_{2}, \scrR({\bf C}^{2}))$}; 
     \node at (3.3,-0.9) {$(\mathfrak{u}_{2},[r_{n}))$};  
         \node at (0.8,-0.5) {$(\mathfrak{u}_{1},[r_{m}))$};  
      \node at (3.2,-1.5) {$(\mathfrak{u}_{2},C^{+}_{2})$};  
             \node at (1.7,-2.5) {$\mathcal{F}_{ C^{2}}$}; 
     }
 \end{tikzpicture}\hspace{2cm}
\begin{tikzpicture}
    
        \node at (0,0) [draw, circle, fill=black, minimum size=1.8pt, inner sep=0pt, label distance=1mm]{};
           \node at (0,-1.5) [draw, circle, fill=black, minimum size=1.5pt, inner sep=0pt, label distance=1mm]{};
             \node at (0,1.5) [draw, circle, fill=black, minimum size=1.5pt, inner sep=0pt, label distance=1mm]{};
    \node at (2.5,0) [draw, circle, fill=black, minimum size=1.8pt, inner sep=0pt, label distance=1mm]{};
       \node at (2.5,-0.9) [draw, circle, fill=black, minimum size=1.5pt, inner sep=0pt, label distance=1mm]{};
       \node at (2.5,0.9) [draw, circle, fill=black, minimum size=1.5pt, inner sep=0pt, label distance=1mm]{};
        \node at (0,-0.5) [draw, circle, fill=black, minimum size=1.5pt, inner sep=0pt, label distance=1mm]{};
         \node at (0,0.5) [draw, circle, fill=black, minimum size=1.5pt, inner sep=0pt, label distance=1mm]{};
       \node at (2.5,-1.5) [draw, circle, fill=black, minimum size=1.5pt, inner sep=0pt, label distance=1mm]{};
       \node at (2.5,1.5) [draw, circle, fill=black, minimum size=1.5pt, inner sep=0pt, label distance=1mm]{};
                   
    \draw (0,1.5) [line width=0.8pt, dotted]  to (0,-1.5);  
   % \draw (0,0) [line width=0.5pt, dotted]  to (0,-0.7);    
    \draw (2.5,1.5) [line width=0.8pt, dotted]  to (2.5,-1.5);
 % \draw (2,0) [line width=0.5pt, ]  to (2,-0.7); 

    \scriptsize{
    \node at (0.95,0) {$(\mathfrak{u}_{1},\scrR({\bf C}^{2}))$};  
    \node at (0.7,-1.5) {$(\mathfrak{u}_{1},C^{+}_{1})$};  
     \node at (3.45,0) {$(\mathfrak{u}_{2}, \scrR({\bf C}^{2}))$}; 
     \node at (0.85,1.5) {$(\mathfrak{u}_{1},\delta[C^{+}_{1}])$};  
      \node at (0.8,-0.5) {$(\mathfrak{u}_{1},[r_{m}))$};  
       \node at (1,0.5) {$(\mathfrak{u}_{1},\delta[[r_{m})])$};  
     \node at (3.3,-0.9) {$(\mathfrak{u}_{2},[r_{n}))$};  
       \node at (3.5,0.9) {$(\mathfrak{u}_{2},\delta[[r_{n})])$};  
      \node at (3.2,-1.5) {$(\mathfrak{u}_{2},C^{+}_{2})$};  
      \node at (3.35,1.5) {$(\mathfrak{u}_{2},\delta[C^{+}_{2}])$};  
        \node at (1.7,-2.5) {$\mathcal{F}_{ C^{2}}^{\bowtie}$}; 
         %  \node at (0.6,0) {$\mathcal{F}_{ C^{2}}$}; 
     }
 \end{tikzpicture}
 \end{center}
 \caption{$\mathcal{F}_{ C^{2}}$ and $\mathcal{F}_{C^{2}}^{\bowtie}$ }\label{fig:chdual}
 \end{figure}%\vspace{0.2cm}
 \begin{remark}
(1) As from Lemma \ref{lemmafixes}, pairs $(\mathfrak{u}, \mathfrak{x}) \in \mathcal{F}_{\bf C^{2}}$ are such that $\mathfrak{u}$ fixes $\mathfrak{x}$. In particular, this means that for every $b \notin \mathfrak{u}$, $\mathfrak{x}$ is a fix point of $\upup_{b}$. For example, consider a pair of the kind $(\mathfrak{u}_{2},[r_{n}))$. We have that $\upup_{(1,0)}([r_{n})) = [r_{n})$, since the elements of the filter generated by $r_{n}$ are the only elements of $\scrR({\bf C}^{2})$ whose join with $(1,0)$ is in $[r_{n})$.
 
(2) It is worth noticing that the only implicative prime filters are the ones given by, respectively, upper-left and upper-right squares of Figure \ref{fig:cxc}, and the two segments $\{(1,y) : y \in C\}$ and $\{(x, 1) : x \in C\}$. They correspond respectively to the pairs $(\mathfrak{u}_{1},\scrR({\bf C}^{2}))$, $(\mathfrak{u}_{2},\scrR({\bf C}^{2}))$, $(\mathfrak{u}_{1},C^{+}_{1})$ and $(\mathfrak{u}_{2},C^{+}_{2})$.
 \end{remark}
\end{example}

\section{Multiplying filters}\label{sec:multiplyingfilters}

\begin{comment}
We now explore the properties of the $\bullet$ in $\sbp$-algebras. %Suppose that ${\bf A}$ be an $\sbp$-algebra, and assume further that $\fr{a}\in\Xd{A}$. Then each $u\in\bool{\bf A}$ satisfies $u\join\neg u = 1\in\fr{a}$, and by primality either $u\in\fr{a}$ or $\neg u\in\fr{a}$. This entails that each $\fr{a}\in\Xd{A}$ contains an ultrafilter of $\bool{\bf A}$, and since $\fr{a}$ is a proper filter this must be the only ultrafilter of $\bool{\bf A}$ contained in $\fr{a}$. 
We recall that for each $\fr{a}\in\Xd{A}$ we denote by $\fr{u}_\fr{a}$ the unique ultrafilter of $\bool{\bf A}$ contained in $\fr{a}$, and call it the \emph{ultrafilter of $\fr{a}$}. Observe that if $\fr{a},\fr{b}\in\Xd{\bf A}$ and $\fr{a}\subseteq\fr{b}$, then $\fr{a}$ and $\fr{b}$ must contain the same ultrafilter.
\end{comment}
The map $\alpha$ of the Section \ref{sec:filterpairs} gives a Priestley isomorphism between $\Xd{\bf A}$ and $\Da{A}^{\bowtie}$ for any $\sbp$-algebra ${\bf A}$. Next we will define an appropriate ternary relation on $\Da{A}^{\bowtie}$ under which $\alpha$ will be an isomorphism in $\mtl^\tau$. For this, our presentation of dual relations in terms of partial binary operations in Section \ref{sec:duality} will prove useful. Given an $\sbp$-algebra ${\bf A}$ and $\fr{a},\fr{b}\in\Xd{A}$, the ultrafilters of $\fr{a}$ and $\fr{b}$ have a profound impact on $\fr{a}\bullet\fr{b}$.

\begin{lemma}\label{lem:filtmulttriv}
Let ${\bf A}$ be an $\sbp$-algebra, and let $\fr{a},\fr{b}\in\Xd{\bf A}$. Then $\fr{u}_\fr{a}\neq\fr{u}_\fr{b}$ implies that $\fr{a}\bullet\fr{b}=A$.
\end{lemma}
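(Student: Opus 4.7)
The plan is to use the fact that distinct ultrafilters of a Boolean algebra can be separated by a complemented pair, and then exploit the Boolean behavior of such elements inside the $\sbp$-algebra ${\bf A}$ to force $0$ into the complex product.

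First, I would observe that since $\mathfrak{u}_\mathfrak{a}$ and $\mathfrak{u}_\mathfrak{b}$ are distinct ultrafilters of the Boolean algebra $\bool{\bf A}$, there exists (without loss of generality) an element $u \in \mathfrak{u}_\mathfrak{a} \setminus \mathfrak{u}_\mathfrak{b}$. Since $u \vee \neg u = 1$ and $\mathfrak{u}_\mathfrak{b}$ is an ultrafilter (hence prime), the failure $u \notin \mathfrak{u}_\mathfrak{b}$ forces $\neg u \in \mathfrak{u}_\mathfrak{b}$. Because $\mathfrak{u}_\mathfrak{a} \subseteq \mathfrak{a}$ and $\mathfrak{u}_\mathfrak{b} \subseteq \mathfrak{b}$ by the definition of the ultrafilter of a prime filter, we obtain $u \in \mathfrak{a}$ and $\neg u \in \mathfrak{b}$.

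Next, I would invoke Lemma \ref{lem:Boolean elements}(3), which says that for a Boolean element $u$ and any $a \in A$ one has $u \cdot a = u \wedge a$. Applying this with $a = \neg u$ yields $u \cdot \neg u = u \wedge \neg u = 0$, since $u$ and $\neg u$ are complementary in $\bool{\bf A}$. Therefore $u \cdot \neg u = 0$ is an element of the complex product $\mathfrak{a} \cdot \mathfrak{b}$, and consequently $0 \in \mathfrak{a} \bullet \mathfrak{b} = \upset(\mathfrak{a} \cdot \mathfrak{b})$.

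Finally, since $\mathfrak{a} \bullet \mathfrak{b}$ is an up-set containing $0$ (the least element of $A$), it must coincide with $A$, which is exactly what the lemma asserts. I do not anticipate any genuine obstacle here: the entire argument hinges on the separation of distinct ultrafilters and on Lemma \ref{lem:Boolean elements}(3), both of which are already available. The proof should be short and essentially a direct computation.
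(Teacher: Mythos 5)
Your proof is correct and follows essentially the same route as the paper's: separate the two ultrafilters by a Boolean element $u$, conclude $\neg u$ lies in the other ultrafilter, compute $u\cdot\neg u = u\meet\neg u = 0$ via Lemma \ref{lem:Boolean elements}, and conclude $\fr{a}\bullet\fr{b}=A$ since it is an up-set containing $0$. No gaps.
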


\begin{proof}
Since $\fr{u}_\fr{a}\neq\fr{u}_\fr{b}$, we may assume without loss of generality that there exists $u\in\fr{u}_\fr{a}\subseteq\fr{a}$ with $u\notin\fr{u}_\fr{b}$. Because $\fr{u}_\fr{b}$ is an ultrafilter of $\bool{\bf A}$, $u\notin\fr{u}_\fr{b}$ implies that $\neg u\in\fr{u}_\fr{b}\subseteq\fr{b}$. It follows that $u\cdot\neg u = u\meet\neg u = 0\in\fr{a}\cdot\fr{b}$. Since $\fr{a}\bullet\fr{b}$ is upward-closed, this yields $\fr{a}\bullet\fr{b}=A$.
\end{proof}

As a consequence of Lemma \ref{lem:filtmulttriv}, understanding the multiplication of prime filters on an $\sbp$-algebras amounts to understanding the products of prime filters with a given ultrafilter. Recall that $\site{u} =  \{\fr{a}\in\Xd{A} : \fr{u}\subseteq\fr{a}\}$. %For an ultrafilter $\fr{u}$ of the Boolean skeleton of an $\sbp$-algebra, we define $\site{u} = \{\fr{a}\in\Xd{A} : \fr{u}\subseteq\fr{a}\}$ and call $\site{u}$ the \emph{site of $\fr{u}$ in ${\bf A}$}. Whenever it is convenient, we may regard $\site{u}$ as a partially-ordered set under inclusion.
%
%\begin{lemma}\label{lem:sameultrafiltersite}
%Let $\fr{a}\in\Xd{A}$. Then $\fr{a}$ and $\fr{a}^*$ have the same ultrafilter, and hence $\site{u}$ is closed under $^*$ for each ultrafilter $\fr{u}$ of $\bool{\bf A}$.
%\end{lemma}
%
%\begin{proof}
%By Lemma \ref{lem:routleystar}, either $\fr{a}\subseteq\fr{a}^*$ or $\fr{a}^*\subseteq\fr{a}$. Let $\fr{u}$ be the ultrafilter of the least of $\fr{a},\fr{a}^*$. Then $\fr{u}\subseteq\fr{a}$ and $\fr{u}\subseteq\fr{a}^*$. Since the ultrafilter contained in a prime filter of ${\bf A}$ is unique, this implies that $\fr{u}$ is the ultrafilter of each of $\fr{a}$ and $\fr{a}^*$.
%\end{proof}

\begin{lemma}\label{lem:doubleneg}
Let ${\bf A}$ be an $\sbp$-algebra and let $\fr{a}\in\Xd{\bf A}$ with $\rad{\bf A}\subseteq\fr{a}$. Then $\fr{a}=\fr{a}^{**}$.
\end{lemma}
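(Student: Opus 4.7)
The inclusion $\fr{a} \subseteq \fr{a}^{**}$ is automatic and uses no hypothesis: for any $a \in \fr{a}$, property (14) of the preliminary proposition gives $a \leq \neg\neg a$, and upward-closedness of $\fr{a}$ forces $\neg\neg a \in \fr{a}$, i.e., $a \in \fr{a}^{**}$. Thus the hypothesis $\rad{\bf A} \subseteq \fr{a}$ is needed only for the reverse inclusion.

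For the reverse, let $a \in \fr{a}^{**}$, so $\neg\neg a \in \fr{a}$. Using the decomposition (\ref{eq:el}), write
\[
a = (u \meet x) \join (\neg u \meet \neg x)
\]
with $u \in \bool{\bf A}$ and $x \in \rad{\bf A}$. A short computation applying the De Morgan laws (properties (11) and (12) of the preliminary proposition), the identity $\neg\neg u = u$ for Boolean $u$ (Lemma \ref{lem:Boolean elements}(1)), and the identity $\neg\neg\neg y = \neg y$ (valid in any MTL-algebra, since $y \leq \neg\neg y$ yields both $\neg\neg\neg y \leq \neg y$ and $\neg y \leq \neg\neg\neg y$), gives
\[
\neg\neg a = (u \meet \neg\neg x) \join (\neg u \meet \neg x).
\]
Since $\neg\neg a \in \fr{a}$ and $\fr{a}$ is prime, at least one of the two disjuncts lies in $\fr{a}$.

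If $u \meet \neg\neg x \in \fr{a}$, then in particular $u \in \fr{a}$; combining this with $x \in \rad{\bf A} \subseteq \fr{a}$ gives $u \meet x \in \fr{a}$, and since $u \meet x \leq a$ we conclude $a \in \fr{a}$. If instead $\neg u \meet \neg x \in \fr{a}$, then $\neg u \meet \neg x \leq a$ immediately yields $a \in \fr{a}$.

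The only subtlety is that in the first case one needs $x$ itself, not merely $\neg\neg x$, to lie in $\fr{a}$, and this is exactly where the hypothesis $\rad{\bf A} \subseteq \fr{a}$ is invoked; the second case goes through without using this hypothesis. Taken together, the two cases establish $\fr{a}^{**} \subseteq \fr{a}$ and hence the asserted equality. The main step on which everything hinges is the explicit computation of $\neg\neg a$ in the Boolean-radical decomposition; once that is in place, primality and upward-closedness deliver the result with no further machinery.
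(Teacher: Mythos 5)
Your proof is correct and follows essentially the same route as the paper's: both reduce to showing $\neg\neg a\in\fr{a}$ implies $a\in\fr{a}$, decompose $a$ via the Boolean-radical representation, compute $\neg\neg a$, and split by primality into one case that is immediate and one that uses $\rad{\bf A}\subseteq\fr{a}$. The only difference is the cosmetic choice of parametrization ($a=(u\meet x)\join(\neg u\meet\neg x)$ versus the paper's $a=(u\meet\neg x)\join(\neg u\meet x)$), which merely swaps which of the two cases invokes the hypothesis.
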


\begin{proof}
Observe that $a\in\fr{a}^{**}$ if and only if $\neg\neg a\in\fr{a}$ by the definition of $^*$, so it suffices to show that $a\in\fr{a}$ if and only if $\neg\neg a\in\fr{a}$. The fact that $a\leq\neg\neg a$ gives that $a\in\fr{a}$ implies $\neg\neg a\in\fr{a}$ by $\fr{a}$ being upward-closed, so suppose that $\neg\neg a\in\fr{a}$. Write $a=(u\meet\neg x)\join (\neg u\meet x)$, where $u\in\bool{\bf A}$ and $x\in\rad{\bf A}$. Then $\neg\neg a = (u\meet\neg\neg\neg x)\join (\neg u\meet \neg \neg x)= (u\meet \neg x)\join (\neg u\meet\neg\neg x)\in\fr{a}$. By primality it follows that $u\meet \neg x\in\fr{a}$ or $\neg u\meet\neg\neg x\in\fr{a}$. In the former case, $a\in\fr{a}$ follows from $u\meet\neg x\leq a$. In the latter case, $\neg u\meet\neg\neg x\in\fr{a}\leq \neg u$ gives that $\neg u\in\fr{a}$, and since $x\in\rad{\bf A}\subseteq\fr{a}$ we obtain that $\neg u\meet x\in\fr{a}$. Thus by $\neg u\meet x\leq a$ and $\fr{a}$ being upward-closed we get $a\in\fr{a}$, concluding the proof.
\end{proof}

\begin{lemma}\label{lem:extrinsic mult}
Let ${\bf A}$ be an $\sbp$-algebra, let $\fr{u}$ be an ultrafilter of $\bool{\bf A}$, and let $\fr{a},\fr{b}\in\site{u}$. Denote by $\prodrad$ and $\resrad$ the operations on $\Xd{\rad{\bf A}}$ defined as in Section \ref{sec:duality}. Then we have the following.

\begin{enumerate}
\item If $\fr{a},\fr{b}\subseteq\scrR_\fr{u}$, then $\fr{a}\bullet\fr{b}= \langle\fr{u}\cup (\fr{a}\cap\rad{\bf A})\prodrad (\fr{b}\cap\rad{\bf A})\rangle$.
\item If $\fr{a}\subseteq\fr{b}^*\subseteq\scrR_\fr{u}\subseteq\fr{b}$, then $\fr{a}\bullet\fr{b}=\langle \fr{u}\cup ((\fr{a}\cap\rad{\bf A})\resrad (\fr{b}^*\cap\rad{\bf A}))\rangle^*$.
\item If none of $\fr{a},\fr{b}\subseteq\scrR_\fr{u}$, $\fr{a}\subseteq\fr{b}^*\subseteq\scrR_\fr{u}\subseteq\fr{b}$, or $\fr{b}\subseteq\fr{a}^*\subseteq\scrR_\fr{u}\subseteq\fr{a}$ hold, then $\fr{a}\bullet\fr{b}=A$.
\end{enumerate}
\end{lemma}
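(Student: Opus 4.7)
The proof naturally splits along the three cases, and the unifying idea will be that each $\fr{a}\in\site{u}$ admits an explicit description relative to $\scrR_{\fr{u}}$: by Lemma \ref{lemma:incl} either $\fr{a}\subseteq\scrR_{\fr{u}}\subseteq\fr{a}^{*}$ or $\fr{a}^{*}\subseteq\scrR_{\fr{u}}\subseteq\fr{a}$. In the first situation, since $\scrR_{\fr{u}}$ contains no coradical elements (a consequence of the decomposition of Section \ref{sec:quadruples} together with the proof of Lemma \ref{lem:radical is star fixed}), the decomposition $c=(u\meet x)\join(\neg u\meet\neg x)$ of an arbitrary $c\in\fr{a}$ forces $u\meet x\in\fr{a}$, giving $\fr{a}=\langle\fr{u}\cup(\fr{a}\cap\rad{\bf A})\rangle$. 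In the second situation, $\rad{\bf A}\subseteq\scrR_{\fr{u}}\subseteq\fr{a}$, so Lemma \ref{lem:doubleneg} yields $\fr{a}=\fr{a}^{**}$, and $\fr{a}$ is recovered from $\fr{a}^{*}=\langle\fr{u}\cup(\fr{a}^{*}\cap\rad{\bf A})\rangle$ via Lemma \ref{cor:Rustar}(2).

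For case (1), write $\fr{x}=\fr{a}\cap\rad{\bf A}$, $\fr{y}=\fr{b}\cap\rad{\bf A}$ and argue both inclusions. The forward inclusion uses $u=u\cdot u$ and $(u\meet x_{1})\cdot(u\meet x_{2})=u\meet(x_{1}\cdot x_{2})$ (both from Lemma \ref{lem:Boolean elements}(3)), showing every generator of $\langle\fr{u}\cup(\fr{x}\prodrad\fr{y})\rangle$ sits above a product from $\fr{a}\cdot\fr{b}$. The reverse inclusion takes $a\cdot b\leq c$ with $a\in\fr{a}$, $b\in\fr{b}$, passes to the decompositions $a\geq u_{1}\meet x_{1}$ and $b\geq u_{2}\meet x_{2}$ (available because neither $\fr{a}$ nor $\fr{b}$ contains coradical elements), and uses the same identity to bound $c$ from below by $(u_{1}\meet u_{2})\meet(x_{1}\cdot x_{2})$. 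For case (2), I would again put $\fr{x}=\fr{a}\cap\rad{\bf A}$ and $\fr{y}=\fr{b}^{*}\cap\rad{\bf A}$, and invoke the explicit form of $\fr{b}=\langle\fr{u}\cup\fr{y}\rangle^{*}$ from Lemma \ref{cor:Rustar}(2). The goal then reduces to showing that $u\meet\neg t\in\fr{a}\bullet\fr{b}$ (for $u\in\fr{u}$ and $\neg\neg t\in\delta[\rad{\bf A}]\setminus\delta[\fr{x}\resrad\fr{y}]$) if and only if $t\in\fr{x}\resrad\fr{y}$. The forward implication comes from the residuation $\fr{x}\cdot(\fr{x}\resrad\fr{y})\subseteq\fr{y}$ in $\rad{\bf A}$ combined with the generator form of $\fr{b}$; the reverse uses that $t\not\in\fr{x}\resrad\fr{y}$ forces a specific element of $\fr{x}$ that witnesses $u\meet\neg t\notin\fr{a}\bullet\fr{b}$.

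For case (3), I would subdivide according to the \emph{position} of $\fr{a}$ and $\fr{b}$ relative to $\scrR_{\fr{u}}$. In the mixed subcase $\fr{a}\subseteq\scrR_{\fr{u}}$, $\fr{b}^{*}\subseteq\scrR_{\fr{u}}\subseteq\fr{b}$ with $\fr{a}\not\subseteq\fr{b}^{*}$: pick $c\in\fr{a}\setminus\fr{b}^{*}$, so by the definition of the Routley star $\neg c\in\fr{b}$, and then $c\cdot\neg c=0\in\fr{a}\cdot\fr{b}$, giving $\fr{a}\bullet\fr{b}=A$; the symmetric subcase is identical. In the remaining subcase, both $\scrR_{\fr{u}}\subsetneq\fr{a}$ and $\scrR_{\fr{u}}\subsetneq\fr{b}$; then $\fr{a}^{*}\subsetneq\scrR_{\fr{u}}$, so there exists $z\in\rad{\bf A}\setminus(\fr{a}^{*}\cap\rad{\bf A})$, whence $\neg z\in\fr{a}$, and likewise some $\neg w\in\fr{b}$ with $w\in\rad{\bf A}$. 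Here the key algebraic fact is that $\neg z\cdot\neg w=0$ in any $\sbp$-algebra whenever $z,w\in\rad{\bf A}$: indeed $\neg w\in\scrC({\bf A})$ while $\neg\neg w\in\rad{\bf A}$, so by Lemma \ref{lemma:corad}(2), $\neg z<\neg\neg w$, which residuates to $\neg z\cdot\neg w\leq 0$. The main obstacle will be case (2), where one must carefully track the interplay between the Routley star, the nucleus $\delta$, and the residual $\resrad$, while verifying that $\langle\fr{u}\cup(\fr{x}\resrad\fr{y})\rangle$ actually meets the hypotheses of Lemma \ref{cor:Rustar}(2); everything else is bookkeeping with the decomposition identity \eqref{eq:el}.
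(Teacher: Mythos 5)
Your overall architecture tracks the paper's proof closely. For (1) the paper works with $\fr{a}\bullet\fr{b}\cap\rad{\bf A}$ directly (replacing $a,b$ by $a\join c,b\join c$ to land in the radical), while you work through generators and the identity $(u\meet x)\cdot(v\meet y)=(u\meet v)\meet(x\cdot y)$; both routes are sound. For (3) the paper is more economical: the failure of all three conditions forces $\fr{a}\not\subseteq\fr{b}^*$ (or $\fr{b}\not\subseteq\fr{a}^*$), and Lemma \ref{lem:routley star definition 1} immediately gives $\fr{a}\bullet\fr{b}=A$. Your hand-built zero-divisors accomplish the same thing, but note that your ``both large'' subcase silently assumes $\rad{\bf A}\not\subseteq\fr{a}^*$; this does follow from $\scrR_{\fr{u}}\subsetneq\fr{a}$, but only via the small extra argument that $\fr{a}^*=\scrR_{\fr{u}}$ would force $\fr{a}=\fr{a}^{**}=\scrR_{\fr{u}}^*=\scrR_{\fr{u}}$ (Lemmas \ref{lem:doubleneg} and \ref{lem:radical is star fixed}), contradicting strictness.

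The genuine gap is in case (2), where your key reduction is stated with the wrong polarity and, as written, would prove the opposite of what is needed. By Lemma \ref{cor:Rustar}(2), $\langle\fr{u}\cup(\fr{x}\resrad\fr{y})\rangle^{*}$ consists exactly of the elements lying \emph{above} some $u\meet\neg t$ with $u\in\fr{u}$ and $\neg\neg t\in\delta[\rad{\bf A}]\setminus\delta[\fr{x}\resrad\fr{y}]$; so the claim you must establish is that $u\meet\neg t\in\fr{a}\bullet\fr{b}$ precisely when $\neg\neg t\notin\delta[\fr{x}\resrad\fr{y}]$. Your formulation asserts the equivalence with ``$t\in\fr{x}\resrad\fr{y}$'' while simultaneously restricting to $t$ with $\neg\neg t\notin\delta[\fr{x}\resrad\fr{y}]$ (which already rules out $t\in\fr{x}\resrad\fr{y}$), and your description of the ``reverse'' direction concludes $u\meet\neg t\notin\fr{a}\bullet\fr{b}$ from $t\notin\fr{x}\resrad\fr{y}$ --- exactly backwards. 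The correct assignment of your two ingredients is: the residuation $\fr{x}\prodrad(\fr{x}\resrad\fr{y})\subseteq\fr{y}$ shows that a general element of $\fr{a}\bullet\fr{b}$, decomposed as $(u\meet u')\meet(\neg c\cdot d)=(u\meet u')\meet\neg z$ with $\neg\neg c\notin\fr{y}$ and $d\in\fr{x}$, must have $z\notin\fr{x}\resrad\fr{y}$ (else $z\cdot d\in\fr{y}$ together with $z\cdot\neg z=0$ gives $z\cdot d\leq\neg\neg c\in\fr{y}$, a contradiction), which yields $\fr{a}\bullet\fr{b}\subseteq\langle\fr{u}\cup(\fr{x}\resrad\fr{y})\rangle^{*}$; conversely, $t\notin\fr{x}\resrad\fr{y}$ supplies a witness $y\in\fr{x}$ with $y\cdot t\notin\fr{y}$, whence $\neg(yt)=y\to\neg t\in\fr{b}=\fr{b}^{**}$ and $y\cdot(y\to\neg t)\leq\neg t$ puts $u\meet\neg t$ \emph{inside} $\fr{a}\bullet\fr{b}$. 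Until the biconditional is stated and used with the correct orientation, the argument for (2) does not go through.
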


\begin{proof}
For (1), observe that $\fr{u}$ is the ultrafilter of $\fr{a}\bullet\fr{b}$, and that $\fr{a}\bullet\fr{b}\subseteq\scrR_\fr{u}$ by the fact that $\bullet$ is order-preserving and $\scrR_\fr{u}\bullet\scrR_\fr{u}=\scrR_\fr{u}$. It thus suffices to show that $\fr{a}\bullet\fr{b}\cap\rad{\bf A}=(\fr{a}\cap\rad{\bf A})\prodrad (\fr{b}\cap\rad{\bf A})$. Let $c\in\fr{a}\bullet\fr{b}\cap\rad{\bf A}$. Then $c\in\rad{\bf A}$, and there exist $a\in\fr{a}$ and $b\in\fr{b}$ such that $a\cdot b\leq c$. Then $a,c\leq a\join c$ and $b,c\leq b\join c$ and the fact that $\fr{a},\fr{b},\rad{\bf A}$ are upward-closed gives that $a\join c\in\fr{a}\cap\rad{\bf A}$ and $b\join c\in\fr{b}\cap\rad{\bf A}$. Observe that $(a\join c)\cdot (b\join c)=ab\join ac\join bc\join c^2\leq c$, whence $c\in (\fr{a}\cap\rad{\bf A})\prodrad (\fr{b}\cap\rad{\bf A})$ and hence $\fr{a}\bullet\fr{b}\cap\rad{\bf A}\subseteq (\fr{a}\cap\rad{\bf A})\prodrad (\fr{b}\cap\rad{\bf A})$.

For the reverse inclusion, let $c\in (\fr{a}\cap\rad{\bf A})\prodrad (\fr{b}\cap\rad{\bf A})$. Then there exist $a\in\fr{a}\cap\rad{\bf A}$, $b\in\fr{b}\cap\rad{\bf A}$, with $a\cdot b\leq c$. Because the radical is closed under $\cdot$, we have that $a\cdot b\in\rad{\bf A}$, so $c\in\rad{\bf A}$ as well. It follows that $a\in\fr{a}$, $b\in\fr{b}$, and $c\in\rad{\bf A}$, and hence $c\in\fr{a}\bullet\fr{b}\cap\rad{\bf A}$. Equality follows.

We now prove (2). Let $\fr{a} = \langle \fr{u} \cup \fr{y} \rangle$ and $\fr{b}^* = \langle \fr{u} \cup \fr{x} \rangle$, so that $\fr{b}=\fr{b}^{**}=\langle\fr{u}\cup\fr{x}\rangle^*$ by Lemma \ref{lem:doubleneg}. Observe that since $\fr{a}\subseteq\fr{b}^*$, we have also that $\fr{a}\cap\rad{\bf A}\subseteq\fr{b}^*\cap\rad{\bf A}$. This implies that $\{1\}\prodrad(\fr{a}\cap\rad{\bf A})\subseteq\fr{b}^*\cap\rad{\bf A}$, and by Lemma \ref{lem:filter to prime filter}(2) there exists a prime filter $\fr{z}\in\Xd{\rad{\bf A}}$ such that $\fr{z}\prodrad (\fr{a}\cap\rad{\bf A})\subseteq\fr{b}^*\cap\rad{\bf A}$. This shows that $\fr{y}\resrad\fr{x}=(\fr{a}\cap\rad{\bf A})\resrad (\fr{b}^*\cap\rad{\bf A})\neq\emptyset$ is a generalized prime filter of $\rad{\bf A}$, so that $\resrad$ is defined in this instance. We prove that $$\langle \fr{u} \cup \fr{y} \rangle \bullet  \langle \fr{u} \cup \fr{x} \rangle^{*} = \langle \fr{u} \cup (\fr{y} \resrad \fr{x})\rangle^{*}.$$
If $a \in \langle \fr{u} \cup \fr{y} \rangle \bullet  \langle \fr{u} \cup \fr{x} \rangle^{*}$, then there exist $w \in \langle \fr{u} \cup \fr{y} \rangle, z \in  \langle \fr{u} \cup \fr{x} \rangle^{*}$, such that $z \cdot w \leq a$. This via Corollary \ref{cor:Rustar} means that there exist $b, b' \in \fr{u}, \neg\neg c \in \scrR({\bf A}) \setminus \fr{x}, d \in \fr{y}$ such that $b \land \neg c \leq z$, $b' \land d \leq w$, thus $(b \land \neg c) \cdot (b' \land d) \leq z \cdot w \leq a$. It is easy to see using directly indecomposable components that $(b \land \neg c) \cdot (b' \land d) = (b \cdot b') \land (\neg c \cdot d)$. Notice that $\neg c \cdot d \in \scrC({\bf A})$, thus there exists $z \in \scrR({\bf A})$ such that $\neg z = \neg c \cdot d$.  We want to prove that $z \notin \delta[\fr{y} \resrad \fr{x}]$, because this will imply that $a \in  \langle \fr{u} \cup (\fr{y} \resrad \fr{x})\rangle^{*}$ by Lemma \ref{cor:Rustar}. 
Let us suppose by contradiction that $z \in \fr{y} \resrad \fr{x}$. Then $z \cdot y \in \fr{x}$ for every $y \in \fr{y}$ and in particular $z \cdot d \in \fr{x}$. Now, $z \cdot \neg z = z \cdot (\neg c \cdot d) = 0$ which implies that $z \cdot d \leq \neg\neg c$ thus $\neg\neg c \in \fr{x}$, a contradiction.

We now need to prove that $\langle \fr{u} \cup (\fr{y} \resrad \fr{x})\rangle^{*} \subseteq \langle \fr{u} \cup \fr{y} \rangle \bullet  \langle \fr{u} \cup \fr{x} \rangle^{*}$, so let $a\in \langle \fr{u} \cup (\fr{y} \resrad \fr{x})\rangle^{*}$. Then by Lemma \ref{cor:Rustar} there exists $u\in\fr{u}$, $\neg\neg z\notin\delta[\fr{y}\resrad\fr{x}]$ such that $u\meet\neg z\leq a$. As $\neg\neg z\notin\delta[\fr{y}\resrad\fr{x}]$, it follows that $z\notin \fr{y}\resrad\fr{x}$, whence there exists $y\in\fr{y}$ such that $yz\notin\fr{x}$. This gives that $yz\notin\langle\fr{u}\cup\fr{x}\rangle$, and since $\fr{b}^*=\langle\fr{u}\cup\fr{x}\rangle$ we have that $\neg\neg (yz)\notin \langle\fr{u}\cup\fr{x}\rangle$. The latter follows because, by the definition of $^*$,
$$\neg\neg x\in\fr{b}^*\iff \neg\neg\neg x\notin\fr{b}\iff \neg x\notin\fr{b}\iff x\in\fr{b}^*.$$
Because $\neg\neg (yz)\notin\langle\fr{u}\cup\fr{x}\rangle$, it follows that $\neg (yz)\in\langle\fr{u}\cup\fr{x}\rangle^*$. Observe:
\begin{align*}
\neg (yz) &= (yz)\to 0\\
&= y\to (z\to 0)\\
&= y\to\neg z.
\end{align*}
Thus $y\to\neg z\in\langle\fr{u}\cup\fr{x}\rangle^*$, and we get $\neg z\in\langle\fr{u}\cup\fr{y}\rangle\bullet\langle\fr{u}\cup\fr{x}\rangle^*$ as $y(y\to\neg z)\leq\neg z$. Since $u\in\langle\fr{u}\cup\fr{y}\rangle\bullet\langle\fr{u}\cup\fr{x}\rangle^*$, we obtain that $u\meet\neg z\in\langle\fr{u}\cup\fr{y}\rangle\bullet\langle\fr{u}\cup\fr{x}\rangle^*$, from which we get that $a$ is contained in the latter set as $u\meet\neg z\leq a$. This gives the reverse inclusion, yielding equality and (2).

For (3), note that the hypothesis guarantees that $\fr{a}\not\subseteq\fr{b}^*$ and $\fr{b}\not\subseteq\fr{a}^*$. Because $\fr{c}^*$ is the largest element of $\Xd{\bf A}$ such that $\fr{c}\bullet\fr{c}^*\neq A$ by Lemma \ref{lem:routley star definition 1}, it follows that $\fr{a}\bullet\fr{b}=A$.
\end{proof}

Given an $\sbp$-algebra ${\bf A}$, Lemma \ref{lem:extrinsic mult} provides a complete description of the partial operation $\bullet$ on $\Xd{\bf A}$ in terms of operation the $\prodrad$ and partial operation $\resrad$ on $\Xd{\rad{\bf A}}$. Specifically, the following rephrases Lemma \ref{lem:extrinsic mult} in terms of $\Da{\bf A}^{\bowtie}$ by employing Proposition \ref{prop:prime} and the isomorphism $\alpha$.

\begin{corollary}\label{cor:extrinsic mult with da}
Let ${\bf A}$ be an $\sbp$-algebra and let $\fr{a},\fr{b}\in\site{u}$ for some $\fr{u}\in\Xd{\bool{\bf A}}$. Then we have the following.
\begin{enumerate}
\item If $\alpha(\fr{a})=(\fr{u},\fr{x})$, and $\alpha(\fr{b})=(\fr{u},\fr{y})$ are in $\Da{A}$, then we have $\alpha(\fr{a}\bullet\fr{b})=(\fr{u},\fr{x}\prodrad\fr{y})$.
\item If $\alpha(\fr{a})=(\fr{u},\fr{x})\in\Da{A}$ and $\alpha(\fr{b})=+(\fr{u},\fr{y})\in\Da{A}^\partial$ with $(\fr{u},\fr{x})\sqsubseteq (\fr{u},\delta^{-1}[\fr{y}])$, then $\alpha(\fr{a}\bullet\fr{b})=+(\fr{u},\fr{x}\resrad\delta^{-1}[\fr{y}])$.
\end{enumerate}
\end{corollary}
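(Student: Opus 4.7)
The plan is to apply Lemma \ref{lem:extrinsic mult} to compute $\fr{a}\bullet\fr{b}$ directly in $\Xd{\bf A}$, then transport through the order isomorphism $\alpha$ using Proposition \ref{prop:prime} together with the Routley star machinery of Lemmas \ref{lem:routleystar}, \ref{lemma:incl}, \ref{lem:doubleneg}, and \ref{lem:radical is star fixed}. In both parts the argument has three moves: first, rewrite the $\alpha$-side hypotheses as containment conditions on $\fr{a}$, $\fr{b}$ matching the hypotheses of Lemma \ref{lem:extrinsic mult}; second, read off the explicit formula for $\fr{a}\bullet\fr{b}$; third, identify in which case of the definition of $\alpha$ this filter falls and extract its two coordinates.

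For Part~(1), the assumption $\alpha(\fr{a}), \alpha(\fr{b})\in\Da{A}$ means, by the definition of $\alpha$, that $\fr{a}\subseteq\fr{a}^*$ and $\fr{b}\subseteq\fr{b}^*$ with $\fr{a}\cap\rad{\bf A}=\fr{x}$, $\fr{b}\cap\rad{\bf A}=\fr{y}$, and $\fr{a}\cap\bool{\bf A}=\fr{b}\cap\bool{\bf A}=\fr{u}$. Lemma \ref{lemma:incl} then yields $\fr{a},\fr{b}\subseteq \scrR_\fr{u}$, so Lemma \ref{lem:extrinsic mult}(1) applies and gives $\fr{a}\bullet\fr{b}=\langle\fr{u}\cup(\fr{x}\prodrad\fr{y})\rangle$. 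Since $\fr{a}\bullet\fr{b}\subseteq\scrR_\fr{u}$ and $\scrR_\fr{u}=\scrR_\fr{u}^*$ (Lemma \ref{lem:radical is star fixed}), order-reversal of $^*$ produces $\fr{a}\bullet\fr{b}\subseteq(\fr{a}\bullet\fr{b})^*$, placing us in the $\Da{A}$-clause of $\alpha$'s definition; Proposition \ref{prop:prime} then identifies the two coordinates of $\alpha(\fr{a}\bullet\fr{b})$ as $\fr{u}$ and $\fr{x}\prodrad\fr{y}$.

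For Part~(2), I first note that $\fr{b}^*\cap\rad{\bf A}$ is $\delta$-stable, because for $x\in\rad{\bf A}$ the identity $\neg\neg\neg x=\neg x$ gives $\delta(x)\in\fr{b}^*\iff x\in\fr{b}^*$; hence $\fr{b}^*\cap\rad{\bf A}=\delta^{-1}[\fr{y}]$. The hypothesis $(\fr{u},\fr{x})\sqsubseteq(\fr{u},\delta^{-1}[\fr{y}])$ then reads $\fr{x}\subseteq \fr{b}^*\cap\rad{\bf A}$, and combined with Lemma \ref{lemma:incl} it translates to the precise chain $\fr{a}\subseteq\fr{b}^*\subseteq\scrR_\fr{u}\subseteq\fr{b}$ required by Lemma \ref{lem:extrinsic mult}(2). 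That lemma now yields $\fr{a}\bullet\fr{b}=\fr{e}^*$ with $\fr{e}=\langle\fr{u}\cup(\fr{x}\resrad\delta^{-1}[\fr{y}])\rangle$. To place this in the $\Da{A}^\partial$-clause of $\alpha$, I use that the integrality of ${\bf A}$ forces $\fr{b}\subseteq\fr{a}\bullet\fr{b}$ while by hypothesis $\fr{b}\not\subseteq\fr{b}^*$; consequently $(\fr{a}\bullet\fr{b})^*\subseteq\fr{b}^*\subsetneq\fr{b}\subseteq\fr{a}\bullet\fr{b}$, so $\fr{a}\bullet\fr{b}\not\subseteq(\fr{a}\bullet\fr{b})^*$. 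Therefore $\alpha(\fr{a}\bullet\fr{b})=+(\fr{e}^{**}\cap\bool{\bf A},\delta[\fr{e}^{**}\cap\rad{\bf A}])$; the Boolean coordinate simplifies via $\neg\neg u=u$ on $\bool{\bf A}$ and Proposition \ref{prop:prime} to $\fr{u}$, and the radical coordinate unfolds to $\delta[\delta^{-1}[\fr{x}\resrad\delta^{-1}[\fr{y}]]]$, which under the identification of $\Xd{\delta[\rad{\bf A}]}$ with the $\Delta$-stable points of $\Xd{\rad{\bf A}}$ from Remark \ref{rem:extrinsictopology} is exactly the prime filter $\fr{x}\resrad\delta^{-1}[\fr{y}]$.

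The main obstacle is the Routley-star bookkeeping in Part~(2): verifying that $\fr{a}\bullet\fr{b}\not\subseteq(\fr{a}\bullet\fr{b})^*$ to force the output into $\Da{A}^\partial$, and then correctly unwinding $\fr{e}^{**}$ so that the $\delta$-image of its radical slice is recognized as $\fr{x}\resrad\delta^{-1}[\fr{y}]$ under the extrinsic-to-intrinsic identification from Remark \ref{rem:extrinsictopology}. Once this is handled, the remainder is a routine application of the preceding material.
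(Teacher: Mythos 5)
Your proposal is correct and follows exactly the route the paper intends: the paper offers no separate proof of this corollary beyond the remark that it ``rephrases Lemma \ref{lem:extrinsic mult} in terms of $\Da{\bf A}^{\bowtie}$ by employing Proposition \ref{prop:prime} and the isomorphism $\alpha$,'' and your three moves (translate the $\alpha$-side hypotheses into the containment hypotheses of Lemma \ref{lem:extrinsic mult}, read off $\fr{a}\bullet\fr{b}$, then identify the branch of $\alpha$ and its coordinates) are precisely that argument with the details filled in. Your Routley-star bookkeeping in Part~(2) checks out, including the point the paper itself leaves implicit, namely that $\fr{e}^{**}\cap\rad{\bf A}=\delta^{-1}[\fr{x}\resrad\delta^{-1}[\fr{y}]]$ and that $\fr{x}\resrad\delta^{-1}[\fr{y}]$ is $\Delta$-stable, so the second coordinate matches the statement under the identification of Remark \ref{rem:extrinsictopology}.
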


Owing to the above, for any $\sbp$-algebra ${\bf A}$ we may define a partial operation $\circ$ on $\Da{A}^{\bowtie}$ by
\begin{enumerate}
\item $(\fr{u},\fr{x})\circ (\fr{u},\fr{y})=(\fr{u},\fr{x}\prodrad\fr{y})$ for any $(\fr{u},\fr{x}),(\fr{u},\fr{y})\in\Da{A}$.
\item $(\fr{u},\fr{x})\circ +(\fr{u},\fr{y})=+(\fr{u},\fr{x}\resrad\delta^{-1}[\fr{y}])$ for any $(\fr{u},\fr{x})\in\Da{A}$, $+(\fr{u},\fr{y})\in\Da{A}^\partial$ with $(\fr{u},\fr{x})\sqsubseteq (\fr{u},\delta^{-1}[\fr{y}])$.
\item $+(\fr{u},\fr{y})\circ (\fr{u},\fr{x}) = +(\fr{u},\fr{x}\resrad\delta^{-1}[\fr{y}])$ for any $(\fr{u},\fr{x})\in\Da{A}$, $+(\fr{u},\fr{y})\in\Da{A}^\partial$ with $(\fr{u},\fr{x})\sqsubseteq (\fr{u},\delta^{-1}[\fr{y}])$.
\item $\circ$ undefined otherwise.
\end{enumerate}
With the above definition, for $\fr{a},\fr{b}\in\Xd{A}$ we have that $\fr{a}\bullet\fr{b}$ is defined if and only if $\alpha(\fr{a})\circ\alpha(\fr{b})$ is defined, and in this case. $\alpha(\fr{a}\bullet\fr{b})=\alpha(\fr{a})\circ\alpha(\fr{b})$. The Priestley isomorphism $\alpha$ is then an isomorphism in $\mtl^\tau$ with respect to ternary relations corresponding to $\bullet$ and $\circ$ as in Section \ref{sec:duality}.

\section{Dual quadruples and the dual construction}\label{sec:dualquadruples}
The stage is finally set to describe the dual of the construction of \cite{AguzFlamUgol}.
\begin{definition}\label{def:dualquad}
A \emph{dual quadruple} is a structure $({\bf S}, {\bf X}, \Upupsilon, \Delta)$ where
\begin{enumerate}
\item ${\bf S}$ is a Stone space;
\item ${\bf X}$ is in {\sf GMTL$^{\tau}$};
\item\label{def:dualquadjoin} $\Upupsilon= \{\upup_{\sf U}\}_{{\sf U} \in \mathcal{A}({\bf S})}$ is an indexed family of $\gmtl^\tau$-morphisms $\upup_{\sf U}: {\bf X} \to {\bf X}$ such that the map $\join_e\colon\Ad{\bf S}\times\Ad{\bf X}\to\Ad{\bf X}$ defined by
$$\join_e({\sf U}, {\sf X})=\upup_{\sf U}^{-1}[{\sf X}]$$
is an external join;
\item\label{cond:delta} $\Delta: {\bf X} \to {\bf X}$ is a continuous closure operator such that $R({\sf x}, {\sf y}, {\sf z})$ implies $R(\Delta {\sf x}, \Delta {\sf y}, \Delta {\sf z})$.
\end{enumerate}
\end{definition}

\begin{definition}
Let $({\bf S}, {\bf X}, \Upupsilon, \Delta)$ be a dual quadruple. We say that ${\sf{u}} \in S$ fixes ${\sf{x}} \in X$ if for every $\sf{U} \subseteq S$ clopen with $\sf{u} \notin \sf{U}$, $\upup_{\sf u}({\sf x}) = {\sf x}$.
\end{definition}

\begin{definition}\label{def:dual}Given a dual quadruple $({\bf S}, {\bf X}, \Upupsilon, \Delta)$, we construct an extended Priestley space $\SnuX$ as follows. The carrier has two parts. $ D = \{(\sf{u},\sf{x}) : u \mbox{ fixes } x \}$ with product order, and $\mathcal{D} ^{\partial} = \{+({\sf u}, \Delta(\sf{x})) : (\sf{u}, \sf{x}) \in \mathcal{D}, x \neq \top\}$, with reverse order. Set $T = D \cup D^{\partial}$. We define a partial order $\sqsubseteq$ on $T$ by $\sf{p}\sqsubseteq \sf{q}$ if and only if
\begin{enumerate}
\item $\sf{p}=(\sf{u},\sf{x})$ and $\sf{q}=(\sf{v},\sf{y})$ for some $(\sf{u},\sf{x}),(\sf{v},\sf{y})\in D$ with $\sf{u}=\sf{v}$ and $\sf{x}\leq\sf{y}$,
\item $\sf{p}=+(\sf{u}',\sf{x}')$ and $\sf{q}=+(\sf{v}',\sf{y}')$ for some $+(\sf{u}',\sf{x}'),+(\sf{v}',\sf{y}')\in D^\partial$ with $\sf{u}'=\sf{v}'$ and $\sf{y}'\leq\sf{x}'$, or
\item $\sf{p}=(\sf{u},\sf{x})$ and $\sf{q}=+(\sf{v},\sf{y})$ for some $(\sf{u},\sf{x})\in D$, $(\sf{v},\sf{y})\in D^\partial$ with $\sf{u}=\sf{v}$.
\end{enumerate}
For each ${\sf U}\in\Ad{\bf S}$, ${\sf V}\in\Ad{\bf X}$, define
$${\sf W}_{({\sf U},{\sf V})} = [({\sf U}\times {\sf V})\cup +({\sf U}\times \Delta[X]\cup S\times\Delta[V]^\comp)]\cap T,$$
and endow $\SnuX$ with the topology generated by the subbase consisting of the sets ${\sf W}_{({\sf U},{\sf V})}$ and ${\sf W}_{({\sf U},{\sf V})}^\comp$.
Further, define a partial binary operation $\circ$ on $\SnuX$ by the following, where $\bullet$ and $\res$ denote the partial operations on ${\bf X}$ given as in Section \ref{sec:functional}.
\begin{enumerate}
\item $(\sf{u},\sf{x})\circ (\sf{u},\sf{y})=(\sf{u},\sf{x}\bullet\sf{y})$ for any $(\sf{u},\sf{x}),(\sf{u},\sf{y})\in\Da{A}$.
\item $(\sf{u},\sf{x})\circ +(\sf{u},\sf{y})=+(\sf{u},\sf{x}\res\Delta(\sf{y}))$ for any $(\sf{u},\sf{x})\in\Da{A}$, $+(\sf{u},\sf{y})\in\Da{A}^\partial$ with $(\sf{u},\sf{x})\sqsubseteq (\sf{u},\Delta(\sf{y}))$.
\item $+(\sf{u},\sf{y})\circ (\sf{u},\sf{x}) = +(\sf{u},\sf{x}\res\Delta(\sf{y}))$ for any $(\sf{u},\sf{x})\in\Da{A}$, $+(\sf{u},\sf{y})\in\Da{A}^\partial$ with $(\sf{u},\sf{x})\sqsubseteq (\sf{u},\Delta(\sf{y}))$.
\item $\circ$ undefined otherwise.
\end{enumerate}
Finally, define a ternary relation $R$ on $\SnuX$ by $R(p,q,r)$ if and only if $p\circ q$ exists and $p\circ q\sqsubseteq r$.
\end{definition}

\begin{theorem}
Let $({\bf S}, {\bf X}, \Upupsilon, \Delta)$ be a dual quadruple. Then $\SnuX$ is the extended Priestley dual of some $\sbp$-algebra.
\end{theorem}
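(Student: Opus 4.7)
The plan is to run the constructions of Sections~\ref{sec:duality}--\ref{sec:multiplyingfilters} in reverse: from $({\bf S},{\bf X},\Upupsilon,\Delta)$ I will manufacture an algebraic quadruple, feed it into the construction of Section~\ref{sec:quadruples}, and then verify that the extended Priestley dual of the resulting $\sbp$-algebra is isomorphic to $\SnuX$ as an object of $\mtl^{\tau}$.

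For the first step, set ${\bf B}=\Ad{\bf S}$ (a Boolean algebra since ${\bf S}$ is Stone), ${\bf A}=\Ad{\bf X}$ (a $\gmtl$-algebra by the duality at the end of Section~\ref{sec:duality}), and let $\delta\colon A\to A$ be the dual of $\Delta$, i.e., $\delta({\sf V})=\Delta^{-1}[{\sf V}]$. The fact that $\Delta$ is a continuous closure operator makes $\delta$ a closure operator that preserves $\meet$ and $\join$, and condition~(\ref{cond:delta}) of Definition~\ref{def:dualquad} dualizes to the nucleus inequality $\delta(a)\cdot\delta(b)\leq\delta(a\cdot b)$, so $\delta$ is wdl-admissible. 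The external join on $B\times A$ supplied via $\Upupsilon$ (condition~(\ref{def:dualquadjoin})) supplies the external join of the quadruple, so $({\bf B},{\bf A},\vee_e,\delta)$ is an algebraic quadruple in the sense of Definition~\ref{def:algquad}. Applying the construction of \cite{AguzFlamUgol} recalled in Section~\ref{sec:quadruples} yields an $\sbp$-algebra ${\bf C}:={\bf B}\otimes_e^{\delta}{\bf A}$ whose Boolean skeleton and radical are canonically isomorphic to ${\bf B}$ and ${\bf A}$, respectively.

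For the second step, I compute $\Xd{\bf C}$ and compare it with $\SnuX$. By Theorem~\ref{th:dualiso}, $\Xd{\bf C}$ is order-isomorphic to $\Da{C}^{\bowtie}$, and Stone duality together with the $\gmtl^{\tau}$-duality identifies $\Xd{\bool{\bf C}}$ with ${\bf S}$ and $\Xd{\rad{\bf C}}$ with ${\bf X}$. Under these identifications, the relation \emph{$\fr{u}$ fixes $\fr{x}$} characterized by Lemma~\ref{lem:fixedpoints} (via $\mu_u(\fr{x})=\fr{x}$ for $u\notin\fr{u}$) corresponds, by the construction of $\vee_e$ from $\Upupsilon$, to the fixing relation of Definition~\ref{def:dual} (via $\upup_{\sf U}({\sf x})={\sf x}$ for ${\sf u}\notin{\sf U}$). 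This yields a bijection $\Da{C}\to D$ and $\Da{C}^{\partial}\to D^{\partial}$ compatible with the partial orders of Definitions~\ref{def:fdual} and~\ref{def:dual}, noting in particular that the translation $\fr{y}\mapsto\delta^{-1}[\fr{y}]$ on the algebraic side matches $\Delta$ on the dual side via Remark~\ref{rem:extrinsictopology}. The comparison of the topologies generated by the subbasic sets $W_{(U,V)}$ and ${\sf W}_{({\sf U},{\sf V})}$ is immediate since their definitions involve only Priestley-theoretic data preserved by this identification.

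Finally, I must check that the partial operation $\bullet$ on $\Xd{\bf C}$ transports to the operation $\circ$ of Definition~\ref{def:dual}, so that the ternary relations coincide. This is precisely the content of Corollary~\ref{cor:extrinsic mult with da}: each of the three clauses defining $\circ$ on $\SnuX$ mirrors the corresponding case of $\bullet$ on $\Xd{\bf C}$, with $\prodrad$ and $\resrad$ on $\Xd{\rad{\bf C}}$ replaced by $\bullet$ and $\res$ on ${\bf X}$ via the $\gmtl^{\tau}$-duality and the functional presentation of Section~\ref{sec:functional}. Combined with the Priestley isomorphism of the previous paragraph, this exhibits $\SnuX$ as the extended Priestley dual of ${\bf C}$, proving the theorem. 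The main technical obstacle is the bookkeeping in the middle step: one must keep the ``positive'' companions $\Da{C}^{\partial}$ and $D^{\partial}$ aligned under the identification $\Xd{\delta[\rad{\bf C}]}\cong \Delta[{\bf X}]$, and verify that the conditions $\delta^{-1}[\fr{y}]\neq\rad{\bf C}$ and $\sf x\neq\top$ correspond, which is where Remark~\ref{rem:extrinsictopology} and the description of $\rad{\bf C}$-fixing ultrafilters in Lemma~\ref{lem:radfixed} do the essential work.
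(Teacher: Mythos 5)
Your proposal is correct and follows essentially the same route as the paper: both manufacture an algebraic quadruple $({\bf B},{\bf A},\vee_e,\delta)$ from the dual quadruple (with $\delta$ the dual of $\Delta$, shown wdl-admissible via condition (4) of Definition \ref{def:dualquad}, and $\vee_e$ supplied by condition (3)), apply the $\otimes_e^\delta$ construction, and identify $\SnuX$ with the resulting algebra's dual via Theorem \ref{th:dualiso} and Corollary \ref{cor:extrinsic mult with da}. The only cosmetic difference is that you work directly with $\Ad{\bf S}$ and $\Ad{\bf X}$ while the paper chooses abstract algebras whose duals are ${\bf S}$ and ${\bf X}$ and transports the structure through fullness of $\Xdf$; your more explicit final identification step is, if anything, a welcome expansion of the paper's terse ``by construction.''
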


\begin{proof}
By extended Stone-Priestley duality, there exists a Boolean algebra ${\bf B}$ and a $\gmtl$-algebra ${\bf A}$ so that ${\bf S}\cong\Xd{\bf B}$ and ${\bf X}\cong\Xd{\bf A}$, and for simplicity we identify these spaces. Because $\Xdf$ is full and $\Delta$ is a continuous isotone map, there exists a lattice homomorphism $\delta\colon {\bf A}\to {\bf A}$ such that $\Xd{\delta}=\Delta$. We claim that $\delta$ is a wdl-admissible map on ${\bf A}$. To see that $\delta$ is expanding, suppose toward a contradiction that $x\in{\bf A}$ with $x\not\leq\delta(x)$. Then (by the prime ideal theorem for distributive lattices) there exists a prime filter $\fr{x}$ of ${\bf A}$ such that $x\in\fr{x}$ and $\delta(x)\notin\fr{x}$, from which is follows that $x\in\fr{x}$ and $x\notin\delta^{-1}[\fr{x}]=\Delta(\fr{x})$. This contradicts $\Delta$ being expanding, so we must have that $x\leq\delta(x)$ for all $x\in A$. A similar argument shows that $\delta$ is idempotent, and hence a closure operator ($\delta$ is a lattice homomorphism, and thus isotone, by duality). To see that $\delta$ is a nucleus, let $x,y\in{\bf A}$. We must show that $\delta(x)\delta(y)\leq\delta(xy)$. If not, then there exists a prime filter $\fr{z}$ of ${\bf A}$ so that $\delta(x)\delta(y)\in\fr{z}$ and $\delta(xy)\notin\fr{z}$. From this, we have that $\upset\delta(x)\bullet\upset\delta(y)\subseteq\fr{z}$, and by Lemma \ref{lem:filter to prime filter}(2) there exist prime filters $\fr{x}$ and $\fr{y}$ such that $\delta(x)\in\fr{x}$, $\delta(y)\in\fr{y}$ and $\fr{x}\bullet\fr{y}\subseteq\fr{z}$. This gives $R(\fr{x},\fr{y},\fr{z})$, and by hypothesis this implies that $\Delta(\fr{x})\bullet\Delta(\fr{y})\subseteq\Delta(\fr{z})$. But $x\in\delta^{-1}[\fr{x}]=\Delta(\fr{x})$ and $y\in\delta^{-1}[\fr{y}]=\Delta(\fr{y})$, so this gives $xy\in\Delta(\fr{z})$, a contradiction to $\delta(xy)\notin\fr{z}$. It follows that $\delta$ is a wdl-admissible map.

Next, note that by duality we have that for each $u\in{\bf B}$, there exists a homomorphism $\upnu_u\colon {\bf A}\to{\bf A}$ such that $\Xd{\upnu_u}=\upupsilon_{\varphi_{\bf B}(u)}$. For $u\in B$, $x\in A$, define $\join_e$ by
$$u\join_e x = \upnu_u(x).$$
We will show that $\join_e$ is an external join. Toward this goal, note that for all $x\in A$, $u\in{\bf B}$, and $\fr{x}\in\Xd{A}$,
\begin{align*}
\fr{x}\in\upupsilon_{\varphi_{\bf B}(u)}^{-1}[\varphi_{\bf A}(x)] &\iff \upupsilon_{\varphi_{\bf B}(u)}(\fr{x})\in\varphi_{\bf A}(x)\\
&\iff \upnu_u^{-1}[\fr{x}]\in\varphi_{\bf A}(x)\\
&\iff x\in\upnu_u^{-1}[\fr{x}]\\
&\iff \upnu_u(x)\in\fr{x}\\
&\iff \fr{x}\in\varphi_{\bf A}(\upnu_u(x)),
\end{align*}
whence $\upupsilon_{\varphi_{\bf B}(u)}^{-1}[\varphi_{\bf A}(x)]=\varphi_{\bf A}(\upnu_u(x))$. It follows readily from this, together with Definition \ref{def:dualquad}(3), that $\join_e$ satisfies condition (V1), (V2), and (V3). For instance, to see that that each of the maps defined by $\lambda_x(u)=u\join_e x$ (for $x\in A$) gives a lattice homomorphism from ${\bf B}$ to ${\bf A}$ (see (V1)), note that
\begin{align*}
\varphi_{\bf A}(\lambda_x(u\join v)) &= \varphi_{\bf A}(\upnu_{u\join v}(x))\\
&=\upupsilon_{\varphi_{\bf B}(u\join v)}^{-1}[\varphi_{\bf A}(x)]\\
&=\upupsilon_{\varphi_{\bf B}(u)\cup\varphi_{\bf B}(v)}^{-1}[\varphi_{\bf A}(x)]\\
&=\upupsilon_{\varphi_{\bf B}(u)}^{-1}[\varphi_{\bf A}(x)]\cup\upupsilon_{\varphi_{\bf B}(v)}^{-1}[\varphi_{\bf A}(x)]\\
&=\varphi_{\bf A}(\upnu_{u}(x))\cup \varphi_{\bf A}(\upnu_{v}(x))\\
&= \varphi_{\bf A}(\lambda_x(u))\cup \varphi_{\bf A}(\lambda_x(v)),
\end{align*}
and hence $\lambda_x(u\join v)=\lambda_x(u)\join\lambda_x(v)$ for any $x\in A$, $u,v\in B$. Similar reasoning using the fact that $({\sf U},{\sf X})\mapsto\upupsilon^{-1}_{\sf U}[{\sf X}]$ is an algebraic quadruple shows that $\join_e$ satisfies (V1), (V2), and (V3). It follows that $({\bf B},{\bf A},\join_e,\delta)$ is an algebraic quadruple. Thus $\SnuX$ is the extended Priestley space of ${\bf B}\otimes_e^\delta {\bf A}$ by construction.
\end{proof}

\begin{lemma}\label{lem:splits up}
Let ${\bf A}$ be an $\sbp$-algebra, and let $\fr{x}\in\Xd{\rad{\bf A}}$. As usual, for each $u\in\bool{\bf A}$ set
$$\mu_u(\fr{x})=\{x\in\rad{\bf A} : u\join x\in\fr{x}\}.$$
Then for each $u,v\in\bool{\bf A}$ we have each of the following.
\begin{enumerate}
\item $\mu_{u\join v}(\fr{x})$ is one of $\mu_u(\fr{x})$ or $\mu_v(\fr{x})$.
\item $\mu_{u\meet v}(\fr{x})$ is one of $\mu_u(\fr{x})$ or $\mu_v(\fr{x})$.
\item $\mu_u(\fr{x})=\fr{x}$ or $\mu_{\neg u}(\fr{x})=\fr{x}$.
\item $\mu_u (\fr{x})=\fr{x}$ or $\mu_u(\fr{x})=\rad{\bf A}$.
\item $\mu_u(\mu_u(\fr{x}))=\mu_u(\fr{x})$.
\end{enumerate}
\end{lemma}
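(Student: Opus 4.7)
The whole statement can be reduced to two lattice identities involving the maps $\mu_u$ together with the results on fixed ultrafilters already established. I would first record the two identities
\begin{equation*}
\mu_{u\join v}(\fr{x})=\mu_u(\fr{x})\cup\mu_v(\fr{x}),\qquad \mu_{u\meet v}(\fr{x})=\mu_u(\fr{x})\cap\mu_v(\fr{x}),
\end{equation*}
valid for all $u,v\in\bool{\bf A}$ and $\fr{x}\in\Xd{\rad{\bf A}}$. Both are routine consequences of the distributivity of $\rad{\bf A}$ (giving $(u\join v)\join x=(u\join x)\join (v\join x)$ and $(u\meet v)\join x=(u\join x)\meet (v\join x)$), combined with the fact that $\fr{x}$ is prime and closed under meets. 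Note also that since $\upnu_u$ is a lattice homomorphism, $\mu_u(\fr{x})=\upnu_u^{-1}[\fr{x}]$ is itself a generalized prime filter of $\rad{\bf A}$.

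The key intermediate fact is that $\mu_u(\fr{x})$ and $\mu_v(\fr{x})$ are comparable under inclusion. Indeed, their union equals $\mu_{u\join v}(\fr{x})$ and is therefore a filter; if neither contained the other, picking $a\in\mu_u(\fr{x})\setminus\mu_v(\fr{x})$ and $b\in\mu_v(\fr{x})\setminus\mu_u(\fr{x})$ would force $a\meet b$ to lie in one of the two sets, and upward-closure would yield a contradiction. Once comparability is in hand, (1) is immediate because the larger of $\mu_u(\fr{x})$ and $\mu_v(\fr{x})$ equals their union, and (2) follows symmetrically from the fact that the smaller equals their intersection. Claim (5) is a one-liner: the ordinary lattice identity $u\join (u\join x)=u\join x$ gives $\upnu_u\circ\upnu_u=\upnu_u$, whence $\mu_u\circ\mu_u=\mu_u$ by taking inverse images.

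For (3), I would invoke Lemma \ref{lem:allfixed} to fix an ultrafilter $\fr{w}$ of $\bool{\bf A}$ fixing $\fr{x}$. Since $\fr{w}$ is an ultrafilter in a Boolean algebra, exactly one of $u,\neg u$ fails to lie in $\fr{w}$, and Lemma \ref{lem:fixedpoints}(1) then yields that the corresponding $\mu_u$ or $\mu_{\neg u}$ fixes $\fr{x}$. Finally, (4) is obtained by feeding (3) into the identity proved above: if $\mu_u(\fr{x})\neq\fr{x}$, then by (3) we must have $\mu_{\neg u}(\fr{x})=\fr{x}$, and computing
\begin{equation*}
\rad{\bf A}=\mu_1(\fr{x})=\mu_{u\join\neg u}(\fr{x})=\mu_u(\fr{x})\cup\mu_{\neg u}(\fr{x})=\mu_u(\fr{x})\cup\fr{x}=\mu_u(\fr{x}),
\end{equation*}
where the final equality uses the trivial inclusion $\fr{x}\subseteq\mu_u(\fr{x})$. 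There is no real obstacle here; the only point requiring a moment of care is the comparability step, since the proof of both (1) and (2) rests entirely on it.
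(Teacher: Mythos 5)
Your proof is correct, but it is organized differently from the paper's. For (1) and (2) the paper argues directly by contradiction: assuming both inclusions $\mu_u(\fr{x}),\mu_v(\fr{x})\subsetneq\mu_{u\join v}(\fr{x})$ (resp.\ $\mu_{u\meet v}(\fr{x})\subsetneq\mu_u(\fr{x}),\mu_v(\fr{x})$) are strict, it produces witnesses $x,y$ and uses primality of $\fr{x}$ on $(x\join u)\join(y\join v)$ to reach a contradiction. You instead isolate the exact identities $\mu_{u\join v}(\fr{x})=\mu_u(\fr{x})\cup\mu_v(\fr{x})$ and $\mu_{u\meet v}(\fr{x})=\mu_u(\fr{x})\cap\mu_v(\fr{x})$ (both correct, by distributivity plus primality and meet-closure of $\fr{x}$, noting $u\join x\in\rad{\bf A}$) and then deduce comparability of $\mu_u(\fr{x})$ and $\mu_v(\fr{x})$ from the classical fact that a union of two filters is a filter only when they are nested. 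This is a cleaner packaging of the same underlying computation, and the two identities are reusable elsewhere. For (3) the routes genuinely diverge: the paper stays self-contained, writing $\fr{x}=\mu_0(\fr{x})=\mu_{u\meet\neg u}(\fr{x})$ and applying (2), whereas you import Lemmas \ref{lem:allfixed} and \ref{lem:fixedpoints} to get an ultrafilter fixing $\fr{x}$ and observe that one of $u,\neg u$ lies outside it; both work, but the paper's argument is more elementary and does not depend on Section \ref{sec:filterpairs}. Your (4) matches the paper's (via $\mu_1=\mu_{u\join\neg u}$ and (3)), and your direct proof of (5) from $\upnu_u\circ\upnu_u=\upnu_u$ is a slight simplification of the paper's derivation of (5) from (4).
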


\begin{proof}
Note that for any $u,v\in\bool{\bf A}$ and $x\in\rad{\bf A}$, we have that both of $x\join u\in\fr{x}$ and $x\join v\in\fr{x}$ imply $x\join u\join v\in\fr{x}$ since $\upset\fr{x}=\fr{x}$. Likewise, $x\join (u\meet v)\in\fr{x}$ implies that $x\join u\in\fr{x}$ and $x\join v\in\fr{x}$. It follows from these observations that $\mu_u(\fr{x}),\mu_v(\fr{x})\subseteq\mu_{u\join v}(\fr{x})$ and $\mu_{u\meet v}(\fr{x})\subseteq\mu_u(\fr{x}),\mu_v(\fr{x})$.
 
To prove (1), toward a contradiction, suppose that each of the inclusions $\mu_u(\fr{x}),\mu_v(\fr{x})\subseteq\mu_{u\join v}(\fr{x})$ is strict. Then there exist $x,y\in\rad{\bf A}$ such that $x\join u\join v,y\join u\join v\in\fr{x}$, but $x\join u\notin\fr{x}$ and $x\join v\notin\fr{x}$. Because $\fr{x}$ is upward-closed and $x\join u\join v,y\join u\join v\in\fr{x}$, we obtain that $x\join y\join u\join v\in\fr{x}$. But $\fr{x}$ is prime in $\rad{\bf A}$ and $x\join u, y\join v\in\rad{\bf A}$, so $(x\join u)\join (y\join v) = x\join y\join u\join v\in\fr{x}$ implies $x\join u\in\fr{x}$ or $y\join v\in\fr{x}$. This contradicts the hypothesis, so either $\mu_{u\join v}(\fr{x})=\mu_u(\fr{x})$ or $\mu_{u\join v}(\fr{x})=\mu_v(\fr{x})$, proving (1).

To prove (2), again suppose that both of the inclusions $\mu_{u\meet v}(\fr{x})\subseteq\mu_u(\fr{x}),\mu_v(\fr{x})$ are strict. Then there exist $x,y\in\rad{\bf A}$ such that $x\join (u\meet v),y\join (u\meet v)\notin\fr{x}$ but $x\join u\in\fr{x}$ and $y\join v\in\fr{x}$. By distributivity, we have $(x\join u)\meet (x\join v)\notin\fr{x}$, whence since $x\join u\in\fr{x}$ we have $x\join v\notin\fr{x}$. Similarly, from $(y\join u)\meet (y\join v)\notin\fr{x}$ and $y\join v\in\fr{x}$ we get $y\join u\notin\fr{x}$. Because $\fr{x}$ is prime, $x\join v,y\join u\notin\fr{x}$ implies $x\join y\join y\join v\notin\fr{x}$. But this contradicts $x\join u\in\fr{x}$ since $x\join u\leq x\join y\join u\join v$ and $\fr{x}$ is upward-closed, which proves (2).

For (3), note that $\fr{x}=\mu_0(\fr{x})=\mu_{u\meet\neg u}(\fr{x})$, which by (2) is one of $\mu_u(\fr{x})$ or $\mu_{\neg u}(\fr{x})$.

For (4), suppose that $\mu_u(\fr{x})\neq\fr{x}$ and $\fr{x}\neq\rad{\bf A}$. By (3) this implies that $\mu_{\neg u}(\fr{x})=\fr{x}$. Also, $\rad{\bf A}=\mu_1(\fr{x})=\mu_{u\join\neg u}(\fr{x})$, and by (1) one the latter is one of $\mu_u(\fr{x})$ or $\mu_{\neg u}(\fr{x})$. The latter is excluded by the assumption that $\fr{x}\neq\rad{\bf A}$, whence $\rad{\bf A}=\mu_u(\fr{x})$.

Finally, (5) is immediate from (4).
\end{proof}.

\begin{theorem}
Let ${\bf Y}$ be the extended Priestley dual of an $\sbp$-algebra. Then there exists a dual quadruple $({\bf S}, {\bf X}, \Upupsilon, \Delta)$ such that ${\bf Y}\cong\SnuX$.
\end{theorem}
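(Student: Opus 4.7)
By the extended Priestley duality for $\sbp$-algebras, there is an $\sbp$-algebra ${\bf A}$ such that ${\bf Y}\cong\Xd{\bf A}$, and without loss of generality we identify ${\bf Y}$ with $\Xd{\bf A}$. The plan is to read off the dual quadruple from the algebraic quadruple $(\bool{\bf A},\rad{\bf A},\lor,\neg\neg)$ supplied by the functor $\Phi_{\textsf{H}}$ of \cite{AguzFlamUgol}, and then show that $\alpha\colon\Xd{\bf A}\to\Da{\bf A}^{\bowtie}$ of Theorem \ref{th:dualiso} transports the structure. Explicitly, set ${\bf S}:=\Xd{\bool{\bf A}}$ (a Stone space, as $\bool{\bf A}$ is Boolean) and ${\bf X}:=\Xd{\rad{\bf A}}$ (an object of $\gmtl^\tau$, with top element $\rad{\bf A}$ itself). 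For each clopen ${\sf U}\in\Ad{\bf S}$, write ${\sf U}=\varphi_{\bool{\bf A}}(u)$ for the unique $u\in\bool{\bf A}$, and let $\upup_{\sf U}:=\Xd{\upnu_u}$, where $\upnu_u(x)=u\lor x$. Finally, set $\Delta:=\Xd{\delta}$ for $\delta(x)=\neg\neg x$.

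Next I would verify that $({\bf S},{\bf X},\Upupsilon,\Delta)$ is a dual quadruple. Each $\upnu_u$ is a $\gmtl$-endomorphism of $\rad{\bf A}$ by condition (V1) of the algebraic quadruple, hence each $\upup_{\sf U}$ is a $\gmtl^\tau$-morphism by the duality of Section \ref{sec:duality}. Dualising (V1)–(V3) via $\varphi_{\bool{\bf A}}$ and $\varphi_{\rad{\bf A}}$ together with the identity $\upupsilon_{\varphi_{\bool{\bf A}}(u)}^{-1}[\varphi_{\rad{\bf A}}(x)]=\varphi_{\rad{\bf A}}(\upnu_u(x))$ (which was already established in the proof of the previous theorem) yields the external-join axioms for ${\sf U}\lor_e{\sf V}:=\upup_{\sf U}^{-1}[{\sf V}]$. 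That $\Delta$ is a continuous closure operator is immediate from $\delta$ being a wdl-admissible map (and, in particular, a closure operator), while the implication $R({\sf x},{\sf y},{\sf z})\Rightarrow R(\Delta{\sf x},\Delta{\sf y},\Delta{\sf z})$ is the dual incarnation of the nucleus inequality $\delta(x)\delta(y)\le\delta(xy)$: if $\fr{x}\bullet\fr{y}\subseteq\fr{z}$, then for $a\in\delta^{-1}[\fr{x}]$ and $b\in\delta^{-1}[\fr{y}]$ we have $\delta(a)\delta(b)\in\fr{z}$, so $\delta(ab)\in\fr{z}$, i.e., $ab\in\delta^{-1}[\fr{z}]$, giving $\delta^{-1}[\fr{x}]\bullet\delta^{-1}[\fr{y}]\subseteq\delta^{-1}[\fr{z}]$.

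For the isomorphism ${\bf Y}\cong\SnuX$, I would invoke the Priestley-space isomorphism $\alpha\colon\Xd{\bf A}\to\Da{\bf A}^{\bowtie}$ of Theorem \ref{th:dualiso} (continuous by Lemma \ref{lem:topology}, hence a homeomorphism by compactness/Hausdorffness) and the $\mtl^\tau$-preservation of $\alpha$ recorded via Corollary \ref{cor:extrinsic mult with da}. It then suffices to identify $\Da{\bf A}^{\bowtie}$ with $\SnuX$ on the nose under the natural identifications $\Xd{\bool{\bf A}}\leftrightarrow S$, $\Xd{\rad{\bf A}}\leftrightarrow X$, and $\Xd{\delta[\rad{\bf A}]}\leftrightarrow\Delta[X]$ of Remark \ref{rem:extrinsictopology}. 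By Lemma \ref{lemmafixes} together with Lemma \ref{lem:fixedpoints}, $(\fr{u},\fr{x})\in\Da{\bf A}$ (i.e., $\fr{u}$ fixes $\fr{x}$ in the algebraic sense) translates precisely to the condition that $\upup_{\varphi_{\bool{\bf A}}(v)}(\fr{x})=\fr{x}$ for every $v\notin\fr{u}$ (equivalently, $\upup_{\sf U}({\sf x})={\sf x}$ for every clopen ${\sf U}$ with ${\sf u}\notin{\sf U}$), which is exactly the fixing relation of Definition \ref{def:dualquad}. The order, subbasic opens ${\sf W}_{({\sf U},{\sf V})}$, and the partial operation $\circ$ on $\SnuX$ are then syntactically identical to those defining $\Da{\bf A}^{\bowtie}$.

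The main obstacle is the bookkeeping needed to match the partial operation $\circ$ on $\SnuX$ (defined abstractly in terms of $\bullet$ and $\res$ on ${\bf X}$) with the partial operation on $\Da{\bf A}^{\bowtie}$ obtained concretely in Corollary \ref{cor:extrinsic mult with da} (defined in terms of $\prodrad$ and $\resrad$ on $\Xd{\rad{\bf A}}$); this however amounts to transporting the definition of $\bullet$ from Section \ref{sec:functional} along the isomorphism ${\bf X}\cong\Xd{\rad{\bf A}}$, whose compatibility with the ternary relation is built into $\gmtl^\tau$. Together with the topological identification already contained in Remark \ref{rem:extrinsictopology}, this produces the desired isomorphism between ${\bf Y}$ and $\SnuX$.
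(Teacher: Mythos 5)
Your proposal is correct and follows essentially the same route as the paper: the same choices ${\bf S}=\Xd{\bool{\bf A}}$, ${\bf X}=\Xd{\rad{\bf A}}$, $\upup_{\sf U}=\Xd{\upnu_u}$, and $\Delta=\Xd{\neg\neg}$, the same nucleus computation for condition (4) of Definition \ref{def:dualquad}, and the same final appeal to the isomorphism $\alpha$ of Theorem \ref{th:dualiso}. The only (harmless) divergence is that you obtain the external-join axioms (V1)--(V3) by transporting them from the algebraic quadruple $(\bool{\bf A},\rad{\bf A},\join,\neg\neg)$ along the isomorphisms $\varphi_{\bool{\bf A}}$ and $\varphi_{\rad{\bf A}}$, whereas the paper verifies them directly on the dual side via Lemma \ref{lem:splits up}.
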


\begin{proof}
Let ${\bf A}=(A,\meet,\join,\cdot,\to,1,0)$ be an $\sbp$-algebra such that ${\bf Y}=\Xd{\bf A}$. Set ${\bf S}:=\Xd{\bool{\bf A}}$ and ${\bf X}:=\Xd{\rad{\bf A}}$. Further define maps $\Delta\colon{\bf X}\to{\bf X}$ by $\Delta(\fr{x})=\{x\in\rad{\bf A} : \neg\neg x\in\fr{x}\}$ and, for each ${\sf U}\in\Ad{\bf S}$, $\upupsilon_{\sf U}\colon{\bf X}\to {\bf X}$ by $\upupsilon_{\sf U}(\fr{x})=\mu_{\varphi^{-1}({\sf U})}(\fr{x})=\{x\in\rad{\bf A} : \varphi^{-1}({\sf U})\join x\in\fr{x}\}$ (where $\varphi$ is the usual isomorphism from $\bool{\bf A}$ to $\Ad{\bf S}$). Set $\Upupsilon = \{\upupsilon_{\sf U}\}_{{\sf U}\in\Ad{\bf S}}$. We will show that $({\bf S},{\bf X},\Upupsilon,\Delta)$ is a dual quadruple.

The requirements (1) and (2) of Definition \ref{def:dualquad} are met by hypothesis. To see that (4) is satisfied, let $\fr{x},\fr{y},\fr{z}\in{\bf X}$ such that $R(\fr{x},\fr{y},\fr{z})$. This gives that $\fr{x}\bullet\fr{y}\subseteq\fr{z}$. We will show that $\Delta(\fr{x})\bullet\Delta(\fr{y})\subseteq\Delta(\fr{z})$, so let $z\in\Delta(\fr{x})\bullet\Delta(\fr{y})$. Then there exists $x\in\Delta(\fr{x})$ and $y\in\Delta(\fr{y})$ such that $x\cdot y\leq z$. From this we have that $\neg\neg x\in\fr{x}$ and $\neg\neg y\in\fr{y}$, so $\neg\neg x\cdot\neg\neg y\in \fr{x}\bullet\fr{y}\subseteq\fr{z}$. Because $\neg\neg x\cdot\neg\neg y\leq \neg\neg (x\cdot y)$, this provides that $\neg\neg(x\cdot y)\in\fr{z}$, whence $\neg\neg z\in\fr{z}$. It follows that $z\in\Delta(\fr{z})$ as desired, concluding the proof of (4).

To show that requirement (3) of Definition \ref{def:dualquad} is met, note that for each ${\sf U}\in\Ad{\bf X}$ we have that the map $\upupsilon_{\sf U}$ is a morphism of $\gmtl^\tau$ because $\upupsilon_{\sf U}$ is the dual of the $\gmtl$-morphism $x\mapsto\varphi^{-1}({\sf U})\join x$. For each ${\sf U}\in\Ad{\bf S}$ and ${\sf X}\in\Ad{\bf X}$, define
$$\join_e ({\sf U},{\sf X}) = \upupsilon_{\sf U}^{-1}[{\sf X}].$$
We will show that $\join_e\colon\Ad{\bf S}\times\Ad{\bf X}\to\Ad{\bf X}$ defines an external join, i.e., that it satisfies (V1), (V2), and (V3) of Definition \ref{def:algquad}. Note that for fixed ${\sf U}\in\Ad{\bf S}$, the map $\join_e({\sf U},-)$ is an endomorphism of $\Ad{\bf X}$ by extended Priestley duality. Let ${\sf X}\in\Ad{\bf X}$, and set $\lambda_{\sf X}({\sf U})=\join_e({\sf U},{\sf X})$. Let ${\sf U},{\sf V}\in\Ad{\bf S}$. Note that $\lambda_{\sf X}({\sf U})\cup\lambda_{\sf X}({\sf V})\subseteq\lambda_{\sf X}({\sf U}\cup{\sf V})$ follows easily from the fact that $\varphi^{-1}$ is a lattice homomorphism together with the fact that ${\sf X}$ is upward-closed, whereas the reverse inclusion may be obtained Lemma \ref{lem:splits up}(1). Thus $\lambda_{\sf X}({\sf U})\cup\lambda_{\sf X}({\sf V}) = \lambda_{\sf X}({\sf U}\cup{\sf V})$. That $\lambda_{\sf X}({\sf U}\cap {\sf V})=\lambda_{\sf X}({\sf U})\cap\lambda_{\sf X}({\sf V})$ follows similarly, and this finishes the proof of (V1).

To prove (V2), note that $\upupsilon^{-1}_\emptyset$ being the identity on $\Ad{\bf X}$ follows immediately from the fact that $\mu_0(\fr{x})=\fr{x}$ for any $\fr{x}\in {\bf X}$. Likewise, $\upupsilon^{-1}_S({\sf X})=X$ for any ${\sf X}\in\Ad{\bf X}$ follows from the fact that $\mu_1(\fr{x})=\rad{\bf A}$ for any $\fr{x}\in{\bf X}$.

For (V3), we must show that
$$\upupsilon^{-1}_{\sf U}[{\sf X}]\cup\upupsilon^{-1}_{\sf V}[{\sf Y}]=\upupsilon^{-1}_{{\sf U}\cup{\sf V}}[{\sf X}\cup{\sf Y}]=\upupsilon^{-1}_{\sf U}[\upupsilon^{-1}_{\sf V}[{\sf X}\cup{\sf Y}]].$$
Observe that an easy argument shows
$$\mu_{\varphi^{-1}({\sf U})\cup\varphi^{-1}({\sf V})}(\fr{x})=\mu_{\varphi^{-1}({\sf V})}(\mu_{\varphi^{-1}({\sf U})}(\fr{x})),$$
from whence it follows that $\upupsilon^{-1}_{{\sf U}\cup{\sf V}}[{\sf X}\cup{\sf Y}]=\upupsilon^{-1}_{\sf U}[\upupsilon^{-1}_{\sf V}[{\sf X}\cup{\sf Y}]]$.

Next let $\fr{x}\in\upupsilon^{-1}_{\sf U}[{\sf X}]\cup\upupsilon^{-1}_{\sf V}[{\sf Y}]$. Then $\fr{x}\in \upupsilon^{-1}_{\sf U}[{\sf X}]$ or $\fr{x}\in\upupsilon^{-1}_{\sf V}[{\sf Y}]$, i.e., $\mu_{\varphi^{-1}({\sf U})}(\fr{x})\in {\sf X}$ or $\mu_{\varphi^{-1}({\sf V})}(\fr{x})\in{\sf Y}$. Since ${\sf X},{\sf Y}$ are upward closed, this implies that $\mu_{\varphi^{-1}({\sf U}\cup{\sf V})}(\fr{x})\in {\sf X}, {\sf Y}$, so certainly $\upupsilon_{{\sf U}\cup{\sf V}}(\fr{x})\in {\sf X}\cup {\sf Y}$, thus
$\fr{x}\in\upupsilon^{-1}_{{\sf U}\cup{\sf V}}[{\sf X}\cup{\sf Y}]$, and $\upupsilon_{\sf U}[{\sf X}]\cup\upupsilon_{\sf V}[{\sf Y}]\subseteq\upupsilon_{{\sf U}\cup{\sf V}}[{\sf X}\cup{\sf Y}]$.

For the final inclusion, let $\fr{x}\in \upupsilon^{-1}_{{\sf U}\cup{\sf V}}[{\sf X}\cup{\sf Y}]=\upupsilon^{-1}_{\sf U}[\upupsilon^{-1}_{\sf V}[{\sf X}\cup{\sf Y}]]$. Then $\mu_{\varphi^{-1}({\sf U})\cup\varphi^{-1}({\sf V})}(\fr{x})\in {\sf X}\cup{\sf Y}$. By Lemma \ref{lem:splits up}(1), we assume without loss of generality that
$$\mu_{\varphi^{-1}({\sf U})\cup\varphi^{-1}({\sf V})}(\fr{x})=\mu_{\varphi^{-1}({\sf U})}(\fr{x}).$$
Then $\mu_{\varphi^{-1}({\sf U})}(\fr{x})\in {\sf X}\cup {\sf Y}$, whence $\mu_{\varphi^{-1}({\sf U})}(\fr{x})\in {\sf X}$ or $\mu_{\varphi^{-1}({\sf U})}(\fr{x})\in {\sf Y}$. In the former case $\fr{x}\in \upupsilon^{-1}_{\sf U}[{\sf X}]\cup\upupsilon^{-1}_{\sf V}[{\sf Y}]$ is immediate, so assume that $\mu_{\varphi^{-1}({\sf U})}(\fr{x})\notin {\sf X}$. Because $\rad{\bf A}\in {\sf X}$, it follows that $\mu_{\varphi^{-1}({\sf U})}(\fr{x})\neq \rad{\bf A}$ and hence from Lemma \ref{lem:splits up}(4) that $\mu_{\varphi^{-1}({\sf U})}(\fr{x})=\fr{x}\in {\sf Y}$. Because ${\sf Y}$ is upward-closed and $\fr{x}\subseteq\mu_{\varphi^{-1}({\sf V})}(\fr{x})$, this gives that $\mu_{\varphi^{-1}({\sf V})}(\fr{x})\in {\sf Y}$, and thus $\fr{x}\in\upupsilon^{-1}_{\sf U}[{\sf X}]\cup\upupsilon^{-1}_{\sf V}[{\sf Y}]$ in any case. This yields (V3), and therefore that $\SnuX$ is a dual quadruple.

To complete the proof, note that $\SnuX\cong \Xd{\bf A}\cong {\bf Y}$ via the isomorphism $\alpha$ of Section \ref{sec:filterpairs} and the construction of $\SnuX$.
\end{proof}

\section{Conclusion}

Constructions along the lines of \cite{AguzFlamUgol} yield significant insight into the structure of the various classes of algebras to which they apply, and commensurately into the logics for which these algebras supply semantics. Duality-theoretic treatments of these constructions in turn provide pictorial insight, suggesting new and intuitive ways of understanding constructions and deepening our overall understanding of the structures in play. The foregoing dualized construction reveals the governing role of the family of maps $\{\upnu_u\}_{u\in\bool{\bf A}}$ in constructing an $\sbp$-algebra ${\bf A}$ from its corresponding algebraic quadruple, and particularly the role of this family of maps in determining the lattice-reduct of ${\bf A}$ (which is not obvious merely from the definition of the lattice operations as rendered in Section \ref{sec:quadruples}). Duality-theoretic tools for residuated lattices, along the lines of those articulated in Section \ref{sec:duality}, play an important part in making analyses of the foregoing kind feasible. Additions to the duality-theoretic toolkit for residuated lattices promise to further deepen our understanding of their algebraic structure, and hence also of substructural logics generally.

% $\textsf{x}\; \textsf{X}$ $\textsf{y}\; \textsf{Y}$ $\textsf{z}\; \textsf{Z}$ $\textsf{u}\; \textsf{U}$ $\textsf{v}\; \textsf{V}$ $\textsf{w}\; \textsf{W}$ $\upupsilon \Upupsilon$ 

%%%%%%%%%%%%%%%%%%%%%%%%%%%%%%%%%%%%%%%%%%%%%%%%%%%%%%%%%%%%%%%%%%%%%%
%% BIBLIOGRAPHY
%%%%%%%%%%%%%%%%%%%%%%%%%%%%%%%%%%%%%%%%%%%%%%%%%%%%%%%%%%%%%%%%%%%%%%

\bibliographystyle{spmpsci}
\bibliography{bibDualTriples}

\end{document}